\documentclass{article}
\textwidth=16.5cm
\oddsidemargin=0cm
\evensidemargin=0cm

\usepackage{amssymb}
\usepackage{verbatim}
\usepackage{array}
\usepackage{latexsym}
\usepackage{enumerate}
\usepackage{amsmath}
\usepackage{amsfonts}
\usepackage{amsthm}
\usepackage{color}
\usepackage[english]{babel}

\newtheorem{theorem}{Theorem}[section]
\newtheorem{proposition}[theorem]{Proposition}
\newtheorem{lemma}[theorem]{Lemma}
\newtheorem{corollary}[theorem]{Corollary}

%{\theoremstyle{definition}
\newtheorem*{definition}{Definition}
\newtheorem{rem}[theorem]{Remark}

%\newtheorem*{proposition*}{Proposition}
%\newtheorem*{corollary*}{Corollary}
%\newtheorem*{lemma*}{Lemma}

% definitions

\def\cX{\mathcal X}

\def\min{{\rm min}}

\def\Supp{\mbox{\rm Supp}}

%\def\ominus{{{[2n-1]}}}
% Groups

%\newcommand{\soc}{\mbox{\rm soc}}
%\newcommand{\Supp}{\mbox{\rm Supp}\gap}
%\newcommand{\supp}{\mbox{\rm Supp}}

%\newcommand{\Aut}{{\mbox{\rm Aut}_{K}(\gS)}}

%\newcommand{\T}{{\rm T}}
%\newcommand{\N}{{\rm N}}

%Greek lower case letters

%Greek capital letters

%Fractions in textstyle

%Capital bold letters

\newcommand{\SK}{{\tilde{\mathcal{S}}}}

\title{Weierstrass semigroups on the Skabelund maximal curve}
\date{}
\author{Peter Beelen, Leonardo Landi, and Maria Montanucci}
\begin{document}
\maketitle

\begin{abstract}
In \cite{Skabe}, D.~Skabelund constructed a maximal curve over $\mathbb{F}_{q^4}$ as a cyclic cover of the Suzuki curve. In this paper we explicitly determine the structure of the Weierstrass semigroup at any point $P$ of the Skabelund curve. We show that its Weierstrass points are precisely the $\mathbb{F}_{q^4}$-rational points. Also we show that among the Weierstrass points, two types of Weierstrass semigroup occur: one for the $\mathbb{F}_q$-rational points, one for the remaining $\mathbb{F}_{q^4}$-rational points. For each of these two types its Ap\'ery set is computed as well as a set of generators.
\end{abstract}

\noindent
AMS: 11G20, 14H05, 14H55

\vspace{1ex}
\noindent
Keywords: Finite field, maximal curve,  Suzuki curve, Weierstrass semigroup, Weierstrass points.

\section{Introduction}

Let $\cX$ be a nonsingular, projective algebraic curve of genus $g$ defined over a field $\mathbb{K}$. Let $P$ be a rational point of $\cX$. The Weierstrass semigroup $H(P)$ is defined as the set of natural numbers $n$ for which there exists a function $f$ on $\cX$ having pole divisor $(f)_\infty=nP$. In general, the semigroup $H(P)$ can be defined for any point $P \in \cX$ by seeing $\cX$ as an algebraic curve over the algebraic closure of $\mathbb{K}$.

According to the Weierstrass gap Theorem, see \cite[Theorem 1.6.8]{Sti}, the set $G(P) :=\mathbb{N} \setminus H(P)$ contains exactly $g$ elements called gaps. The structure of $H(P)$ in general varies as $P \in \cX$ varies. However, it is known that generically the semigroup $H(P)$ is the same, but that there can exist finitely many points of $\cX$, called Weierstrass points, with a different set of gaps. These points are of intrinsic theoretical interest, for example in St\"ohr-Voloch theory \cite{SV} to obtain characterizing properties of the curve, but when $\mathbb{K}$ is a finite field, they also occur in the study of algebraic-geometry (AG) codes \cite{TV}.

In this direction, an intensively studied class of curves is the class of maximal curves, that is, algebraic curves defined over a finite field $\mathbb{F}_q$ having as many rational points as possible according to the Hasse--Weil bound.
More precisely, an algebraic curve $\cX$ of genus $g(\cX)$ defined over $\mathbb{F}_q$ is $\mathbb{F}_q$-maximal if it has exactly $q+ 1 + 2g(\cX)\sqrt{q}$ $\mathbb{F}_q$-rational points. Clearly, this can only be the case if the cardinality $q$ of the finite field is a square or $g(\cX)=0$.

Important examples of maximal curves over suitable finite fields are the so-called Deligne-Lusztig curves; namely the $\mathbb{F}_{q^2}$-maximal Hermitian curve
\begin{equation*} \label{hermitian}
\mathcal{H}: y^{q+1}=x^q+x,
\end{equation*}
the $\mathbb{F}_{q^4}$-maximal Suzuki curve
\begin{equation} \label{suzuki}
\mathcal{S}: y^q+y=x^{q_0}(x^q+x),
\end{equation}
where $q_0=2^s$, $s \geq 1$ and $q=2q_0^2$; and the $\mathbb{F}_{q^6}$-maximal Ree curve
\begin{equation*} \label{ree}
\mathcal{R}: \begin{cases} z^q-z=x^{2q_0}(x^q-x), \\ y^q-y=x^{q_0}(x^q-x), \end{cases}
\end{equation*}
where $q_0=3^s$, $s \geq 1$ and $q=3q_0^2$.

For a fixed $g$, the curve $\mathcal{H}$ has the largest possible genus $g(\mathcal{H}) =q(q-1)/2$ that an $\mathbb{F}_{q^2}$-maximal curve can have. Also $\mathcal{H}$ and $\mathcal{S}$ are two of the four only curves of genus $g \geq 2$ having at least $8g^3$ automorphisms, see \cite[Theorem 11.127]{HKT} and \cite{Henn}.
The Weierstrass points of $\mathcal{H}$ and $\mathcal{S}$ as well as the precise structure of the Weierstrass semigroups at every point of these curves are known; see \cite{GV} and \cite{BMZ}. On the other hand nothing is known on the curve $\mathcal{R}$ and the computation of Weierstrass semigroups seems to be a challenging task, see \cite{DE}.

By a result commonly attributed to Serre, see \cite[Proposition 6]{L}, any $\mathbb{F}_{q^2}$-rational curve which is covered by an $\mathbb{F}_{q^2}$-maximal curve is also $\mathbb{F}_{q^2}$-maximal. Apart from the Deligne-Lusztig curves, most of the known maximal curves are subcovers of the Hermitian curve.

Since 2009, examples of maximal curves that are not subcovers of the Hermitian curve have been constructed as Kummer extenstions of the Deligne-Lusztig curves.
The first known example $\tilde{\mathcal{H}}$ of a maximal curve which is not a subcover of the Hermitian curve was constructed by Giulietti and Korchm\'aros as
\begin{equation*} \label{gkcurve}
\tilde{\mathcal{H}}: \begin{cases} y^{q+1}=x^q+x, \\ z^{\frac{q^3+1}{q+1}}=y^{q^2}-y; \end{cases}
\end{equation*}
see \cite{GK}. This curve is $\mathbb{F}_{q^6}$-maximal and commonly called the Giulietti-Korchm\'aros (GK) curve.
Other two families of maximal curves as generalizations of the GK curve and Kummer extensions of the Hermitian curve were constructed in \cite{BM} and \cite{GGS}.

Analogously, Skabelund \cite{Skabe} constructed Kummer extensions of the Suzuki and Ree curves as follows. Let $q_0= 2^s$ with $s\geq 1$ and $q= 2q_0^2$. The curve
\begin{equation} \label{skabesuzuki}
\tilde{\mathcal{S}}:\begin{cases} y^q+y=x^{q_0}(x^q+x),\\ t^{\frac{q^2+1}{q+2q_0+1}}=x^q+x,\end{cases}
\end{equation}
is $\mathbb{F}_{q^4}$-maximal. Now let $q_0=3^s$ with $s\geq 1$ and $q= 3q_0^2$. The curve
\begin{equation*} \label{skaberee}
\tilde{\mathcal{R}}:\begin{cases} y^q-y=x^{q_0}(x^q-x),\\ z^q-z=x^{2q_0}(x^q-x), \\ t^{\frac{q^3+1}{q+3q_0+1}}=x^q-x,\end{cases}
\end{equation*}
is $\mathbb{F}_{q^6}$-maximal.
The Weierstrass points as well as the precise structure of the Weierstrass semigroups at every point of $\tilde{\mathcal{H}}$ was determined in \cite{BM1}.
Hence it is natural to ask whether the same can be done for the curve $\tilde{\mathcal{S}}$.
In this paper, a complete answer to the aforementioned question is given.
More precisely, we show the following theorem.

\begin{theorem} \label{all}
  Let $q_0 = 2^s$ with $s \geq 1$ and $q = 2q_0^2$. Let $\tilde{\mathcal{S}}$ be the curve defined in Equation \eqref{skabesuzuki} and $P \in \tilde{\mathcal{S}}(\overline{\mathbb{F}}_{q})$. As usual denote by $H(P)$ the Weierstrass semigroup of $P$. Then the following hold:

\bigskip
\noindent
If $P \in \tilde{\mathcal{S}}(\mathbb{F}_q)$, then $H(P)=\langle q^2-2q_0 q+q, q^2-q_0 q +q_0, q^2-q+2q_0, q^2, q^2+1\rangle.$

\bigskip
\noindent
If $P \in \tilde{\mathcal{S}}(\mathbb{F}_{q^4}) \setminus \tilde{\mathcal{S}}(\mathbb{F}_q)$, then
\[H(P)=\langle q^2-q+1, q^2-2q_0+1, q^2-q_0+1, q^2, q^2+1, f_i, h_j \mid i=0,\ldots, 2q_0-2, j=0,\ldots,q_0-2 \rangle,\]
where $f_i := (i+1)q_0(q^2-q+1) - i(q^2+1) - 1$ and $h_j := (2j+1)q_0(q^2-q+1) - j(q^2+1) - q_0$.

\bigskip
\noindent
For integers $a_1,a_2,a_3,a_4,f$, we write $\sigma:=a_1+a_2+a_3+a_4+f$ and $\nu:=a_1+a_2q_0+a_32q_0+a_4q+fq^2$. If $P \not\in \tilde{\mathcal{S}}(\mathbb{F}_{q^4})$, then $H(P) =\mathbb{N} \setminus (F_1 \cup F_2 \cup F_3 \cup F_4 \cup F_5 \cup F_6)$, where
    \begin{alignat*}{3}
      & F_1 = \{ && \nu + 1 \mid 0 \leq a_1\leq q_0-1; 0 \le a_2 \le 1; 0\le a_3 \leq q_0-1; a_4 \ge 0; f \geq 0;\sigma \leq q-2 \}, \\
      & F_2 = \{ && \nu + (n+1)q_0 q  + 1 \mid 1 \leq n \leq 2q_0-2; 0 \leq a_1\leq q_0-1; 0 \le a_2 \le 1;0 \leq  a_3 \leq q_0-1; \\
      & && 0 \leq a_4 \leq q-q_0-1-nq_0; f \geq 0; \sigma = q-q_0-2-nq_0+n \}, \\
      & F_3 = \{ && \nu+(2n+1)q_0q+n+2 \mid 0 \leq n \leq q_0-2; 0 \leq a_1 \leq q_0-2-n;0 \le a_2 \le 1; 0 \leq a_3\leq q_0-1;\\
      & &&  0 \le a_4 \leq q_0-1; f \geq 0; \sigma = q-q_0-2-2nq_0+n \}, \\
      & F_4 = \{ && \nu + (2n+2)q_0q+n+3 \mid 0 \leq n \leq q_0-3; 0 \leq a_1 \leq q_0-3-n; 0 \le a_2 \le 1;0 \leq a_3\leq q_0-1; \\
      & &&  0\le a_4 \leq q_0-1; f \geq 0; \sigma = q-2q_0-2-2nq_0+n \}, \\
      & F_5 = \{ && \nu + c q_0(q+1) + d(2qq_0+2q_0+1)  + 1 \mid a_1=0; 0 \le c \le 1; 0 \leq a_2 \leq 1-c;1-c \leq d \leq q_0-1; \\
      & &&  0 \leq a_3 \leq q_0-1-d; 0 \leq a_4 \leq q_0-1; f \geq 0; \sigma = q-2-2dq_0-cq_0 \}, \\
      & F_6 = \{ && \nu + q_0 + (2n+2)q_0q+n+2 \mid a_1=0; a_2=0; 0 \leq n \leq q_0-2;0 \leq a_3 \leq n; 0 \leq a_4 \leq q_0-1;  \\
      & &&  f \geq 0; \sigma = q-2q_0-2-2nq_0+n \}.
    \end{alignat*}
\end{theorem}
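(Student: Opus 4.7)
The plan is to treat the three cases in Theorem~\ref{all} separately, using explicit rational functions on $\tilde{\mathcal{S}}$ together with the divisor equivalence arising from $\mathbb{F}_{q^4}$-maximality. First I would record preliminary data. The cover $\pi\colon\tilde{\mathcal{S}}\to\mathcal{S}$ is cyclic Kummer of degree $d=q-2q_0+1$, totally ramified exactly above the $q^2+1$ $\mathbb{F}_q$-rational points of $\mathcal{S}$. Using the Hansen--Stichtenoth functions $x$, $y$, $z$, $w$ on $\mathcal{S}$ together with $t$, at the $\mathbb{F}_q$-rational point $P_\infty\in\tilde{\mathcal{S}}$ over infinity the pole orders multiply by $d$ and straightforward calculation gives
\[-v_{P_\infty}(x)=q^2-2q_0q+q,\ -v_{P_\infty}(y)=q^2-q_0q+q_0,\ -v_{P_\infty}(z)=q^2-q+2q_0,\ -v_{P_\infty}(w)=q^2+1,\ -v_{P_\infty}(t)=q^2.\]
I would also record the maximality equivalence $q^2P+\Phi(P)\sim(q^2+1)Q$ for any $\mathbb{F}_{q^4}$-rational points $P,Q$, where $\Phi$ is the $q^2$-power Frobenius.

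For $P\in\tilde{\mathcal{S}}(\mathbb{F}_q)$, the lift of $\Sz(q)$ to $\aut(\tilde{\mathcal{S}})$ acts transitively on $\tilde{\mathcal{S}}(\mathbb{F}_q)$, reducing the problem to $P=P_\infty$. The five generators in the theorem are precisely the pole orders above, so containment $\supseteq$ is immediate. For equality I would compute the Ap\'ery set $\mathrm{Ap}(H(P_\infty),q^2)$: monomials $x^{a_1}y^{a_2}z^{a_3}w^{a_4}$ with $a_i$ in bounded ranges supply a complete system of residues modulo $q^2$ with controlled pole orders, and summing these residues yields exactly $g(\tilde{\mathcal{S}})$ gaps, forcing equality.

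For $P\in\tilde{\mathcal{S}}(\mathbb{F}_{q^4})\setminus\tilde{\mathcal{S}}(\mathbb{F}_q)$, the equivalence $(q^2+1)P\sim(q^2+1)P_\infty$ gives $q^2+1\in H(P)$, and combining with $q^2P+\Phi(P)\sim(q^2+1)P_\infty$ (using $\Phi(P)\neq P$) produces a function with pole divisor exactly $q^2P$. The ``small'' generators $q^2-q+1$, $q^2-q_0+1$, $q^2-2q_0+1$ I would obtain from functions of the form $(x-x(P))$, $(y-\varphi(x))$ and $(t-\psi(x,y))$ multiplied by suitable Frobenius translates to cancel unwanted poles; the less elementary generators $f_i$ and $h_j$ arise from iterated application of the maximality relation to these building blocks. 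Equality with the listed generators is again verified through an Ap\'ery set computation, this time with respect to $q^2+1$.

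Finally, for $P\notin\tilde{\mathcal{S}}(\mathbb{F}_{q^4})$, I would first prove that the Weierstrass locus of $\tilde{\mathcal{S}}$ equals $\tilde{\mathcal{S}}(\mathbb{F}_{q^4})$. This reduces to computing the Weierstrass weights at both orbits of rational Weierstrass points from the gap sequences found above, and matching the total against the degree of the ramification divisor of the canonical linear series via St\"ohr--Voloch theory applied to $\tilde{\mathcal{S}}$ (which, like $\mathcal{S}$, is expected to be Frobenius non-classical, so the order sequence must be pinned down first). Once this is established, every $P\notin\tilde{\mathcal{S}}(\mathbb{F}_{q^4})$ has the same generic gap set, which I would identify with $F_1\cup\cdots\cup F_6$ by exhibiting an explicit function with pole divisor $nP$ for each $n$ outside the union, together with an independent verification that $|F_1\cup\cdots\cup F_6|=g(\tilde{\mathcal{S}})$ via inclusion-exclusion over the six index sets. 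The principal obstacle of the whole proof lies here: both the determination of the Frobenius order sequence needed to isolate the Weierstrass locus and the combinatorial bookkeeping required to count $|F_1\cup\cdots\cup F_6|$ are technically demanding, and the interplay between the five parameters $a_1,a_2,a_3,a_4,f$ in the $F_i$ must be handled carefully to avoid double-counting.
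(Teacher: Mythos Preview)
For the two rational cases your outline is broadly correct, though the paper computes the Ap\'ery set with respect to the \emph{multiplicity} of the semigroup ($q^2-2q_0q+q$ and $q^2-q+1$ respectively), not $q^2$ or $q^2+1$; working modulo the smallest generator is what makes the residue-class bookkeeping tractable. Also, realising the $\mathrm{h}_j$ as pole orders at an $\mathbb{F}_{q^4}$-point is more delicate than your sketch suggests: the paper needs a specific principal-divisor computation on $\mathcal{S}$ (Lemma~\ref{lem:PrincDiv}) to produce the right function.

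The real divergence is in the generic case $P\notin\tilde{\mathcal{S}}(\mathbb{F}_{q^4})$, and here your plan has a genuine gap. You propose to exhibit, for each $n\notin F$, a function with pole divisor $nP$. But for non-$\mathbb{F}_{q^4}$-rational $P$ the fundamental equation only yields $(\pi_P)=q^2P+\Phi^4(P)-(q^2+1)P_\infty$ with $\Phi^4(P)\neq P$ (here $\Phi$ is the $q$-Frobenius; your stated equivalence is off by a power of Frobenius), so there is no natural supply of functions whose sole pole is $P$, and manufacturing them from conjugates of $\pi_P$ is at least as hard as the original problem. The paper runs the argument in the \emph{dual} direction: since $(dt)_{\tilde{\mathcal{S}}}=(2g(\tilde{\mathcal{S}})-2)P_\infty$, every regular differential is $f\,dt$ with $f\in L((2g(\tilde{\mathcal{S}})-2)P_\infty)$, and then $v_P(f)+1$ is a gap (Corollary~\ref{basisdiff}). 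Thus one only needs functions with a pole at the $\mathbb{F}_q$-rational point $P_\infty$ and a prescribed \emph{vanishing order} at $P$; these are built explicitly from products of $\tilde{x}_Q,\tilde{y}_Q,\tilde{z}_Q,\tilde{w}_Q,\pi_P$ together with auxiliary functions on $\mathcal{S}$ obtained via local power-series expansions at $P$ (Lemmas~\ref{functhn}--\ref{lastfamily}). Combined with the inclusion--exclusion count $|F|=g(\tilde{\mathcal{S}})$, this gives $G(P)=F$ directly for every such $P$, and the description of the Weierstrass locus becomes a \emph{corollary} rather than an input. Your weight-matching route would in addition require the canonical order sequence of $\tilde{\mathcal{S}}$, which is not available a priori and is never computed in the paper.
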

As a result, we will also obtain the set of Weierstrass points of $\tilde{\mathcal{S}}$.
\begin{corollary}
The set of Weierstrass points of $\tilde{\mathcal{S}}$ is equal to $\tilde{\mathcal{S}}(\mathbb{F}_{q^4})$.
\end{corollary}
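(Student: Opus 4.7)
The plan is to deduce this directly from Theorem~\ref{all}, invoking only the general fact that a curve carries finitely many Weierstrass points. Theorem~\ref{all} exhibits three candidate Weierstrass semigroups, according to whether $P$ lies in $\tilde{\mathcal{S}}(\mathbb{F}_q)$, in $\tilde{\mathcal{S}}(\mathbb{F}_{q^4}) \setminus \tilde{\mathcal{S}}(\mathbb{F}_q)$, or in the complement of $\tilde{\mathcal{S}}(\mathbb{F}_{q^4})$ inside $\tilde{\mathcal{S}}(\overline{\mathbb{F}}_q)$. My first step would be to observe that the last class is infinite whereas the Weierstrass locus is finite; consequently every point outside $\tilde{\mathcal{S}}(\mathbb{F}_{q^4})$ must be a non-Weierstrass point, and the common semigroup that Theorem~\ref{all} attaches to this third class is necessarily the generic semigroup of $\tilde{\mathcal{S}}$. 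By Weierstrass' gap theorem its gap set is therefore exactly $\{1,2,\ldots,g\}$, where $g=g(\tilde{\mathcal{S}})=\tfrac{1}{2}q_0(q-1)(q+2q_0+1)$ is Skabelund's genus. This already proves one of the two inclusions: no point outside $\tilde{\mathcal{S}}(\mathbb{F}_{q^4})$ is a Weierstrass point.

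For the reverse inclusion, I would show that each of the two semigroups attached to $\mathbb{F}_{q^4}$-rational points differs from the generic one, which amounts to exhibiting a single positive element of that semigroup not exceeding $g$. For $P \in \tilde{\mathcal{S}}(\mathbb{F}_q)$ I would point to the listed generator $q^2-2q_0 q+q$, and for $P \in \tilde{\mathcal{S}}(\mathbb{F}_{q^4})\setminus \tilde{\mathcal{S}}(\mathbb{F}_q)$ to the generator $q^2-q+1$. Both are of order $q^2$, while $g$ is of order $q_0\,q^2$, so the required inequality is an entirely routine check. Hence every $\mathbb{F}_{q^4}$-rational point of $\tilde{\mathcal{S}}$ is a Weierstrass point, and the two inclusions combine to the claimed equality.

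The one point that deserves genuine care is conceptual rather than computational: one must make sure that having three distinct generator presentations in Theorem~\ref{all} does not accidentally collapse to fewer semigroups when these are compared as subsets of $\mathbb{N}$. The route above sidesteps this by handling the generic class through the finite-Weierstrass-set principle, and the two $\mathbb{F}_{q^4}$-rational classes through the explicit $\leq g$ comparison, so no head-on comparison between the three semigroups is ever required.
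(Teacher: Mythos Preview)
Your argument for the inclusion $\tilde{\mathcal{S}}(\mathbb{F}_{q^4})^c \cap \{\text{Weierstrass points}\} = \emptyset$ is fine and is exactly what the paper does: all points outside $\tilde{\mathcal{S}}(\mathbb{F}_{q^4})$ share one semigroup, there are infinitely many of them, and the Weierstrass locus is finite, so that common semigroup is the generic one.

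The reverse inclusion, however, breaks at the sentence ``By Weierstrass' gap theorem its gap set is therefore exactly $\{1,2,\ldots,g\}$.'' The Weierstrass gap theorem only tells you that the gap set has cardinality $g$; it does \emph{not} say the generic gap sequence is $\{1,\ldots,g\}$. In positive characteristic the generic semigroup can be non-ordinary, and $\tilde{\mathcal{S}}$ is such a curve. Concretely, in the description of $F_1$ in Theorem~\ref{all} take $a_1=a_2=a_3=a_4=0$ and $f=q-2$: all constraints are met and you get the generic gap $(q-2)q^2+1 = q^3-2q^2+1 = 2g(\tilde{\mathcal{S}})-q+1$, which is strictly larger than $g(\tilde{\mathcal{S}})=\tfrac{1}{2}q(q-1)^2$. (Incidentally, your displayed formula $g=\tfrac{1}{2}q_0(q-1)(q+2q_0+1)$ is not the genus of $\tilde{\mathcal{S}}$; for $q_0=2$ it gives $91$ rather than $196$.) Consequently, ``exhibit a nongap $\le g$'' is \emph{not} a valid criterion for distinguishing $H(P)$ from the generic semigroup here, and your reverse inclusion is unproved.

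The paper handles this differently: it shows (Corollaries at the ends of Sections~\ref{sec:three} and~\ref{sec:four}) that $H(P)$ is \emph{symmetric} for every $P\in\tilde{\mathcal{S}}(\mathbb{F}_{q^4})$, and then invokes \cite[Proposition~50]{Konto} to conclude that such $P$ are Weierstrass points. If you want to stay closer to your own approach, you would instead need to check directly from the description of $F$ in Theorem~\ref{all} that the multiplicities $q^2-2q_0q+q$ and $q^2-q+1$ are genuine generic gaps (equivalently, that the generic multiplicity differs from both); this is doable but requires engaging with the sets $F_i$ rather than the shortcut you attempted.
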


The paper is organized as follows: In the next section we give the necessary background on the curve $\tilde{\mathcal{S}}$ as well as some results on Weierstrass semigroups and their gaps that we will need later. In Section \ref{sec:three}, we compute $H(P)$ for $P \in \tilde{\mathcal{S}}(\mathbb{F}_q)$. Here it should be mentioned that to a large extent this case was already treated in \cite{Skabe}. Our results complement those in \cite{Skabe} by proving a claim on the generators of the semigroup that was stated in \cite{Skabe} without proof. For $P \not\in \tilde{\mathcal{S}}(\mathbb{F}_q)$, the corresponding Weierstrass semigroups are currently unknown. Our main results are the determination of these semigroups. More precisely, in Section \ref{sec:four} we compute the Weierstrass semigroup for $P \in \tilde{\mathcal{S}}(\mathbb{F}_{q^4}) \setminus \tilde{\mathcal{S}}(\mathbb{F}_q)$, while in Section \ref{sec:five} we deal with the generic case $P \not\in \tilde{\mathcal{S}}(\mathbb{F}_{q^4})$.

\section{The curve $\SK$}\label{sec:two}

Let $q_0=2^s$ with $s \in \mathbb{N}$ and let $q=2q_0^2$. The curve $\SK$ defined in Equation \eqref{skabesuzuki} was constructed by Skabelund in \cite{Skabe}. % is given over ${\rm PG}(3,\mathbb{F}_{q^4})$ by the affine equations
%
%\begin{equation} \label{skab}
%\SK: \left\{
%\begin{array}{l}
%Y^q+Y=X^{q_0}(X^q+X),\\
%T^{q-2q_0+1}=X^{q}+X.\\
%\end{array}
%\right.
%\end{equation}
This curve has genus $g(\SK)=\frac{q^3-2q^2+q}{2}$, $q^5-q^4+q^3+1$ $\mathbb{F}_{q^4}$-rational points, and a unique point at infinity $P_{\infty}$, which is singular and $\mathbb{F}_{q}$-rational. The curve $\SK$ has been introduced in \cite{Skabe}, where it was proved that $\SK$ is maximal over $\mathbb{F}_{q^4}$.
As is clear from Equations \eqref{suzuki} and \eqref{skabesuzuki}, the curve $\SK$ is a cyclic Galois cover of the Suzuki curve of degree $q-2q_0+1$ by projecting $(x,y,t)$ on $(x,y).$ In the remainder of this paper, we will denote this cover by $\mathrm{pr}: \SK \to \mathcal S.$
The cover $\mathrm{pr}$ gives in the usual way rise to a map $\mathrm{pr}^*: Div(\mathcal S) \to Div(\SK).$ If $Q \in \mathcal{S},$ then for any $Q$ of $\mathcal S$, the divisor $\mathrm{pr}^*(Q)$ is equal to the orbit of any point $P \in \mathrm{pr}^{-1}(Q)$ under the cyclic group $C_{q-2q_0+1}$ multiplied with the order of the stabilizer of $P$ in $C_{q-2q_0+1}$. For an algebraic function $f$ on $\mathcal S$ (resp. $\SK$), we will write $(f)_{\mathcal S}$ (resp. $(f)_{\SK}$) for its divisor. It is well known that for any function $f \in \mathbb{F}_q(\mathcal S)$, it holds that  $(f)_\SK=\mathrm{pr}^* ((f)_{\mathcal S}).$
%Also, $\SK$ is not $\mathbb{F}_{q^4}$-Galois covered by the Hermitian curve \cite{GMQZ}.

The automorphism group ${\rm Aut}(\SK)$ of $\SK$ is defined over $\mathbb{F}_{q^4}$ and has size $q^2(q-1)(q^2+1)(q-2q_0+1)$.  It has a normal subgroup $H$ isomorphic to the Suzuki group ${\rm Sz}(q)$ and ${\rm Aut}(\SK)=H\times C_{q-2q_0+1}$, where $C_{q-2q_0+1}$ is the Galois group of the cyclic Galois-covering $\mathrm{pr}: \SK \rightarrow \mathcal{S}$.
The set $\SK(\mathbb{F}_{q^4})$ of the $\mathbb{F}_{q^4}$-rational points of $\SK$ splits into two orbits under the action of ${\rm Aut}(\SK)$:
one orbit has size $q^2+1$ and equals $\mathcal O_1=\SK(\mathbb{F}_q)$. The other orbit is $\mathcal O_2=\SK(\mathbb{F}_{q^4})\setminus\SK(\mathbb{F}_q)$ and has size $q^5-q^4+q^3-q^2$; see \cite{GMQZ} for these and other details on $\SK$. In particular, we can conclude that any point $P \in \mathcal O_1=\SK(\mathbb{F}_q)$ has the same Weierstrass semigroup and similarly for $P \in \mathcal O_2=\SK(\mathbb{F}_{q^4})\setminus\SK(\mathbb{F}_q).$

Let $x,y,z,t\in \mathbb{F}_{q^4}(\SK)$ be the coordinate functions of the function field of $\SK$, which satisfy $y^q+y=x^{q_0}(x^q+x)$ and $t^{q-2q_0+1}=x^q+x$. These functions have a pole at $P_\infty$ only. The same is true for the functions $z:=y^{2q_0}+x^{2q_0+1}$ and $w:=xy^{2q_0}+z^{2q_0}.$
%
%\begin{equation} \label{polexyzt}
%(x)_{\infty}=(q^2-2q_0 q+q) P_\infty; \quad (y)_{\infty}=( q^2-q_0 q +q_0) P_\infty; \quad (t)_{\infty}=q^2 P_\infty.
%\end{equation}
%Furthermore, defining $z:=y^{2q_0}+x^{2q_0+1}$ and $w:=xy^{2q_0}+z^{2q_0}$ we have
%
%\begin{equation} \label{polezw}
%(z)_{\infty}=(q^2-q+2q_0) P_\infty; (w)_{\infty}=(q^2+1) P_\infty.
%\end{equation}
Now let $P \neq P_\infty$ be the point of $\SK$ with $(x,y,t)$-coordinates $(a,b,c)$. Further let $Q:=\mathrm{pr}(P)$. Then $Q$ has $(x,y)$-coordinates $(a,b).$ With $\Phi(P)$ (resp. $\Phi(Q)$) we denote the point $\Phi(P)=(a^q,b^q,c^q)$ (resp. $\Phi(Q)=(a^q,b^q)$).
In the sequel we will frequently use the following functions and information on their divisors:
\begin{equation}\label{xtilda}
\tilde{x}_Q:=x+a; \quad (\tilde{x}_Q)_{\mathcal S}=Q+E_x-q Q_\infty,
\end{equation}
where $E_x \ge 0 \text{ and } \Supp(E_x) \cap \{Q,Q_\infty\}=\emptyset$,

\begin{equation} \label{ytilda}
\tilde{y}_Q:=y+b+a^{q_0}(x+a); \quad (\tilde{y}_Q)_{\mathcal S}=q_0Q+\Phi(Q)+E_y-(q+q_0) Q_\infty,
\end{equation}
where $E_y \ge 0 \text{ and } \Supp(E_y) \cap \{Q,\Phi(Q),Q_\infty\}=\emptyset$,
\begin{equation} \label{ztilda}
\tilde{z}_Q:=a^{2q_0}x+z+b^{2q_0}; \quad (\tilde{z}_Q)_{\mathcal S}=2q_0Q+\Phi(Q)+E_z-(q+2q_0) Q_\infty,
\end{equation}
where $E_z \ge 0 \text{ and } \Supp(E_z) \cap \{Q,\Phi(Q),Q_\infty\}=\emptyset$,
and
\begin{equation} \label{wtilda}
\tilde{w}_Q:=a^q \tilde{z}_Q+b^{2q_0}x+w+b^2+a^{2q_0+2};  \quad (\tilde{w}_Q)_{\mathcal S}=qQ+2q_0\Phi(Q)+\Phi^2(Q)-(q+2q_0+1) Q_\infty.
\end{equation}
Over $\mathbb{F}_q$, the Suzuki curve has $L$-polynomial $(1+2q_0T+qT^2)^{g(\mathcal S)}.$ By the Fundamental Equation \cite[Proposition 10.6 (I)]{HKT}, the operator $q\,\mathrm{id}+2q_0\Phi+\Phi^2$ therefore acts trivially on linear equivalence classes of divisors of degree zero, for example on $Q-Q_\infty$. This means that the existence of a function $\tilde{w}_Q$ with divisor $(q\,\mathrm{id}+2q_0\Phi+\Phi^2)(Q-Q_\infty)$ is guaranteed.
Similarly applying the Fundamental Equation to the maximal curve $\SK$ (over $\mathbb{F}_{q^4}$), we see that there exists a function $\pi_P$ on $\SK$ with divisor:
\begin{equation}
  \label{eq:div_pi}
  (\pi_P)_\SK = q^2P+\Phi^4(P)-(q^2+1)P_\infty.
\end{equation}
In particular $(\pi_P)_\SK = (q^2+1)(P-P_\infty)$ if $P$ is $\mathbb{F}_{q^4}$-rational.
It is not hard to find the function $\pi_P$ explicitly:
$$\pi_P=\tilde{w}_Q+A^q\tilde{z}_Q+A^{q+2q_0}\tilde{x}_Q+c^{q^2}(t+c), \text{ where } A:=a^q+a.$$
Note that if $P$ is $\mathbb{F}_q$-rational, then $A=0$ and $\pi_P=\tilde{w}_Q.$ This observation is consistent with the divisors given above. Indeed, if $\Phi(P)=P$, then also $\Phi(Q)=Q$ and since $Q$ and $Q_\infty$ are totally ramified in the cover $\mathrm{pr}: \SK \to \mathcal{S}$:
\[(\pi_P)_\SK=\mathrm{pr}^*((\tilde{w}_Q)_{\mathcal S})=(q+2q_0+1)(q-2q_0+1)(P-P_\infty)=(q^2+1)(P-P_\infty).\]

As we already announced, our aim is to determine the structure of the Weierstrass semigroup at every point $P$ of the curve $\tilde{\mathcal{S}}$.
First of all we observe that the orbit structure of ${\rm Aut}(\SK)$ tells us already that the Weierstrass semigroup at $P \in \SK(\mathbb{F}_q)$ will be the same as $H(P_\infty)$, and  $H(R)=H(Q)$ for all $R,Q \in \SK(\mathbb{F}_{q^4})\setminus\SK(\mathbb{F}_q)$.

Clearly, as we will do for $P \not\in \tilde{\mathcal{S}}(\mathbb{F}_{q^4})$, computing $H(P)$ is equivalent to completely determine the gap structure at $P$, that is, the complement $G(P)=\mathbb{N} \setminus H(P)$. Hence to finish this section we state some facts that we will use to achieve this. We start with the following well-known proposition connecting regular differentials (i.e., differential forms having no poles anywhere on $\tilde{\mathcal{S}}$) and gaps of $H(P)$.

\begin{proposition} \cite[Corollary 14.2.5]{VS} \label{differentials}
Let $\cX$ be an algebraic curve of genus $g$ defined over a field $\mathbb{K}$. Let $P$ be a point of $\cX$ and $\omega$ be a regular differential on $\cX$. Then $v_P(\omega) + 1$ is a gap at $P$.
\end{proposition}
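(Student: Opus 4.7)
The plan is to argue by contradiction: I would assume that $n := v_P(\omega)+1$ lies in $H(P)$ and derive an impossibility by applying the residue theorem to the differential $f\omega$, where $f$ is a function witnessing $n\in H(P)$.

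Concretely, if $n\in H(P)$ then there exists $f\in\mathbb{K}(\cX)$ with $(f)_\infty=nP$, that is, $v_P(f)=-n$ and $v_Q(f)\geq 0$ for every $Q\neq P$. I would then set $\eta:=f\omega$ and read off its valuations: at $P$ we have $v_P(\eta)=v_P(f)+v_P(\omega)=-n+(n-1)=-1$, while at any $Q\neq P$, $v_Q(\eta)=v_Q(f)+v_Q(\omega)\geq 0$ since $f$ is regular outside $P$ and $\omega$ is regular everywhere on $\cX$. Thus $\eta$ is a rational differential whose only pole on $\cX$ is a simple pole at $P$.

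The final step is to invoke the residue theorem on the complete curve $\cX$: the sum of residues of any rational differential is zero. Because $\eta$ is regular outside $P$, every other residue vanishes and the theorem forces $\mathrm{Res}_P(\eta)=0$. On the other hand, $v_P(\eta)=-1$ means that in any local parameter $t$ at $P$ the differential $\eta$ has a Laurent expansion of the form $(a_{-1}t^{-1}+a_0+a_1 t+\cdots)\,dt$ with $a_{-1}\neq 0$, so $\mathrm{Res}_P(\eta)=a_{-1}\neq 0$, a contradiction. Hence $n=v_P(\omega)+1\in G(P)$, as claimed.

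The argument is essentially elementary once the residue theorem is available, so there is no real obstacle; the only technical point to keep in mind is that one should regard $\cX$ as a complete curve over $\overline{\mathbb{K}}$ so that the residue theorem applies and the statement makes sense for arbitrary points $P$, which is consistent with the convention in the introduction that $H(P)$ is defined for any $P\in\cX(\overline{\mathbb{K}})$.
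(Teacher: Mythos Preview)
Your proof via the residue theorem is correct and is in fact one of the standard arguments for this classical fact. Note, however, that the paper does not actually prove this proposition: it is stated with a citation to \cite[Corollary 14.2.5]{VS} and used as a black box. So there is no ``paper's own proof'' to compare against; you have supplied a valid self-contained argument where the authors simply invoked the literature.
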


This proposition has the following, for us very useful, consequence.

\begin{corollary} \label{basisdiff}
For any point $P$ on the curve $\tilde{\mathcal{S}}$ distinct from $P_\infty$, and for any $f\in L((2g(\tilde{\mathcal{S}})-2)P_\infty)$,we have $v_P(f) + 1 \in G(P)$.
\end{corollary}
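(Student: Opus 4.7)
The plan is to reduce the statement to Proposition \ref{differentials} by constructing a nonzero regular differential $\eta$ on $\SK$ whose divisor equals exactly $(2g(\SK)-2)P_\infty$. Once such an $\eta$ is available, for any $f \in L((2g(\SK)-2)P_\infty)$ the product $f\eta$ is a regular differential: indeed
\[
(f\eta) = (f) + (\eta) \geq -(2g(\SK)-2)P_\infty + (2g(\SK)-2)P_\infty = 0.
\]
Since $P \neq P_\infty$ we have $v_P(\eta)=0$, whence $v_P(f\eta) = v_P(f)$, and Proposition \ref{differentials} applied to $f\eta$ yields $v_P(f)+1 \in G(P)$, as desired.

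To construct $\eta$ explicitly, I would start from the classical fact that the canonical class of the Suzuki curve $\mathcal{S}$ equals $(2g(\mathcal{S})-2)Q_\infty$, so there is a differential $\omega_{\mathcal{S}}$ with $(\omega_{\mathcal{S}})_{\mathcal{S}} = (2g(\mathcal{S})-2)Q_\infty$. I would then invoke the Kummer structure of the cover $\mathrm{pr}: \SK \to \mathcal{S}$: since $\gcd(q-2q_0+1,q)=1$ and on $\mathcal{S}$ the function $x^q+x$ has only simple zeros (one at each of the $q^2$ affine $\mathbb{F}_q$-rational points) together with a single pole of order $q^2$ at $Q_\infty$, the cover $\mathrm{pr}$ is totally ramified over $\mathcal{S}(\mathbb{F}_q)$ and unramified elsewhere. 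Riemann--Hurwitz applied to the pull-back $\mathrm{pr}^*\omega_{\mathcal{S}}$ then gives
\[
(\mathrm{pr}^*\omega_{\mathcal{S}})_{\SK} = (2g(\mathcal{S})-2)(q-2q_0+1)P_\infty + (q-2q_0)\sum_{P \in \SK(\mathbb{F}_q)} P.
\]
Analogously, total ramification of $\mathrm{pr}$ yields $(t)_{\SK} = \sum_{P \in \SK(\mathbb{F}_q)\setminus\{P_\infty\}} P - q^2 P_\infty$. Setting
\[
\eta := \frac{\mathrm{pr}^*\omega_{\mathcal{S}}}{t^{q-2q_0}},
\]
the contributions at the affine $\mathbb{F}_q$-rational points cancel, and a direct arithmetic check using $q=2q_0^2$, $2g(\mathcal{S})-2=2q_0q-2q_0-2$, and $2g(\SK)-2=q^3-2q^2+q-2$ shows that the coefficient at $P_\infty$ is exactly $2g(\SK)-2$.

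The main obstacle is the divisor bookkeeping, in particular pinning down the ramification pattern of the Kummer cover $\mathrm{pr}$ and verifying the numerical identity that produces the coefficient $2g(\SK)-2$ at $P_\infty$; both reduce to routine computations once the simplicity of the zeros of $x^q+x$ on $\mathcal{S}$ and the coprimality $\gcd(q-2q_0+1,q)=1$ are in place. With $\eta$ so constructed, the conclusion is purely formal via Proposition \ref{differentials}, and the hypothesis $P \neq P_\infty$ is precisely what is used to ensure $v_P(\eta)=0$.
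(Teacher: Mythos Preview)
Your proposal is correct and follows essentially the same approach as the paper: the paper takes $\omega_{\mathcal{S}}=dx$ explicitly (using $(dx)_{\mathcal{S}}=(2g(\mathcal{S})-2)Q_\infty$), computes $(dx)_{\SK}$ via the ramification of $\mathrm{pr}$, and then observes that $\eta:=dx/t^{q-2q_0}=dt$ has divisor $(2g(\SK)-2)P_\infty$, after which Proposition~\ref{differentials} applies to $f\,dt$. The only cosmetic difference is that the paper names the differential $dt$ rather than working abstractly with a $\omega_{\mathcal{S}}$ whose existence is asserted; your $\eta$ is exactly $dt$ up to a scalar.
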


\begin{proof}
First note that $(dx)_{\mathcal{S}}= (2g(\mathcal{S})-2)Q_\infty=(2q_0(q-1)-2)Q_\infty$. The set of points that branch in the cover $\mathrm{pr}: \SK \to \mathcal{S}$ is exactly $\mathcal{S}(\mathbb{F}_{q})$, the set of $\mathbb{F}_q$-rational points of the Suzuki curve, all with ramification index $q-2q_0+1$. Moreover, the points of $\SK$ above $\mathcal{S}(\mathbb{F}_{q})$ are precisely the $\mathbb{F}_q$-rational points of $\tilde{\mathcal{S}}$. Therefore, we immediately obtain that
\begin{eqnarray*}
(dx)_{\tilde{\mathcal{S}}}&=& (q-2q_0+1)(2q_0(q-1)-2)P_\infty+ (q-2q_0)\sum_{P \in \SK(\mathbb{F}_{q})} P
\\ &=& (2q_0q^2-2q^2+q-2)P_\infty+ (q-2q_0)\sum_{P \in \SK(\mathbb{F}_{q}), \ P \ne P_\infty } P.
\end{eqnarray*}
Thus, from $t^{q-2q_0+1}=x^q+x$ and $(t)_{\tilde{\mathcal{S}}}=\sum_{P \in \SK(\mathbb{F}_{q}), \ P \ne P_\infty } P - q^2 P_\infty$ we get,
$$(dt)_{\tilde{\mathcal{S}}}= \bigg(\frac{dx}{t^{q-2q_0}}\bigg)_{\tilde{\mathcal{S}}}= (q^3-2q^2+q-2)P_\infty.$$
In particular a differential $fdt$ is regular if and only if $f \in L((q^3-2q^2+q-2)P_\infty) =L((2g(\tilde{\mathcal{S}})-2)P_\infty)$. The claim now follows by applying Proposition \ref{differentials}.
\end{proof}

\section{The Weierstrass semigroup for $P\in \SK(\mathbb{F}_q)$}\label{sec:three}

From Equations \eqref{xtilda}-\eqref{wtilda} and $(t)_{\tilde{\mathcal{S}}}=\sum_{P \in \SK(\mathbb{F}_{q}), \ P \ne P_\infty } P - q^2 P_\infty$, it is clear that the Weierstrass semigroup $H(P_\infty)$ contains the integers $q^2-2q_0 q+q$, $q^2-q_0 q +q_0$, $q^2-q+2q_0$, $q^2$, and $q^2+1.$
This was already observed in \cite{Skabe}, where in fact it was written that $H(P_\infty)$ is generated by these five numbers.
To complete the results in \cite{Skabe}, we give a formal proof of this claim in this section.
Note that the fact that $\SK(\mathbb{F}_q)$ forms one orbit under the action of the automorphism group of $\SK$, directly implies that $H(P)=H(P_\infty)$ for any point $P \in \SK(\mathbb{F}_q).$

We will use the following standard terminology for numerical semigroups:
\begin{definition}
Let $Z \subset \mathbb{N}$ be a numerical semigroup.
The set $G(Z):=\mathbb{N} \setminus Z$ is called the set of gaps of $Z$.
The \emph{genus} $g(Z)$, resp. \emph{multiplicity} $m_Z$, resp. \emph{conductor} $c_Z$ of $Z$ is defined to be
\[g(Z):=|\mathbb{N} \setminus Z|, \quad \text{resp.} \quad m_Z:=\min\{z \in Z \mid z>0\}, \quad \text{resp.} \quad c_Z:=1+\max\{z \in \mathbb{N} \setminus Z\}.\]
Further, the Ap\'ery set $Ap(Z)$ of $Z$ is defined to be
\[\mathrm{Ap}(Z) := \{ z \in Z \mid z-m_Z \not\in Z \}.\]
\end{definition}

The Ap\'ery set $\mathrm{Ap}(Z)$ has cardinality $m_Z$ and its elements form a complete set of representatives for the congruence classes of $\mathbb{Z}$ modulo $m_Z$. Moreover, by definition of $\mathrm{Ap}(Z)$, each representative is minimal among those lying in $Z$. As a consequence, the semigroup $Z$ can conveniently be described as $Z = \{ a + t m_Z \mid a \in \mathrm{Ap}(Z), t \geq 0 \}.$
This also implies that the genus and conductor of $Z$ can be deduced directly from $m_Z$ and $\mathrm{Ap}(Z)$:
\[g(Z) = \sum_{a \in \mathrm{Ap}(Z)} \left\lfloor \frac{a}{m_Z} \right\rfloor \quad \text{and} \quad c_Z=1+\max\{z \in \mathrm{Ap}(Z)\}-m_Z.\]
This implies that if $A \subset Z$ consists of $m_Z$ elements that form a complete set of representatives for the congruence classes of $\mathbb{Z}$ modulo $m_Z$, then $A=\mathrm{Ap}(Z)$ if and only if $g(Z) = \sum_{a \in A} \left\lfloor \frac{a}{m_Z} \right\rfloor.$
It is well known that $c_Z \le 2g(Z)$. A numerical semigroup for which $c_Z=2g(Z)$ is called a \emph{symmetric} semigroup.

Now we return to the study of the semigroup $H(P_\infty).$
It will be convenient to introduce a notation for the five integers contained in it that we mentioned previously:
\[\mathfrak{g}_0:=q^2-2q_0 q+q, \quad \mathfrak{g}_1:=q^2-q_0 q +q_0, \quad \mathfrak{g}_2:=q^2-q+2q_0, \quad \mathfrak{g}_3:=q^2, \quad \text{and} \quad \mathfrak{g}_4:=q^2+1.\]
Moreover, we write $H=\langle \mathfrak{g}_0,\mathfrak{g}_1,\mathfrak{g}_2,\mathfrak{g}_3,\mathfrak{g}_4\rangle$ for the numerical semigroup generated by them.
Since $\mathfrak{g}_0 < \mathfrak{g}_1 < \mathfrak{g}_2 < \mathfrak{g}_3 < \mathfrak{g}_4$, we see that $m_H=\mathfrak{g}_0$.
Moreover, since $H \subseteq H(P_\infty)$, we have $g(H) \ge g(\SK)$.
Using Ap\'ery sets, we will actually show that $g(H) \le g(\SK)$, which then immediate will imply that $H=H(P_\infty).$

\begin{lemma}
  \label{lem:apery}
Let $A := \{ h \mathfrak{g}_1 + i \mathfrak{g}_2 + j \mathfrak{g}_3 + k \mathfrak{g}_4 \mid 0 \leq h \leq 1, 0 \leq i \leq q_0-1, 0 \leq j \leq q-2q_0, 0 \leq k \leq q_0-1 \}$.
Then $A$ is a complete set of representatives for the congruence classes of $\mathbb{Z}$ modulo $\mathfrak{g}_0$.
\end{lemma}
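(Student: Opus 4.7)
The plan is to proceed in two steps: a cardinality count, then an injectivity check. By enumerating parameters,
\[|A| \le 2\cdot q_0 \cdot (q-2q_0+1) \cdot q_0 = 2q_0^2(q-2q_0+1) = q(q-2q_0+1) = \mathfrak{g}_0,\]
and since any complete set of representatives modulo $\mathfrak{g}_0$ has exactly $\mathfrak{g}_0$ elements, it suffices to prove that distinct admissible quadruples $(h,i,j,k)$ produce incongruent integers modulo $\mathfrak{g}_0$; this will also confirm that $|A| = \mathfrak{g}_0$.

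For the injectivity, I would use the Chinese Remainder Theorem applied to the factorization $\mathfrak{g}_0 = q\cdot (q-2q_0+1)$ into coprime integers — coprime because $q = 2q_0^2$ is a power of $2$ while $q-2q_0+1$ is odd. The key algebraic identities, verified by direct expansion using $2q_0^2 = q$, are
\[\mathfrak{g}_1 = (q+q_0)(q-2q_0+1) \quad\text{and}\quad \mathfrak{g}_2 = (q+2q_0)(q-2q_0+1),\]
so that $\mathfrak{g}_1, \mathfrak{g}_2 \equiv 0 \pmod{q-2q_0+1}$, while modulo $q$ one has $\mathfrak{g}_1 \equiv q_0$, $\mathfrak{g}_2 \equiv 2q_0$, $\mathfrak{g}_3 \equiv 0$ and $\mathfrak{g}_4 \equiv 1$.

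Assuming two elements of $A$ agree modulo $\mathfrak{g}_0$, reducing modulo $q$ gives
\[q_0\bigl((h_1-h_2) + 2(i_1-i_2)\bigr) + (k_1-k_2) \equiv 0 \pmod q.\]
The bounds $|h_1-h_2|\le 1$, $|i_1-i_2|\le q_0-1$, $|k_1-k_2|\le q_0-1$ force the left side to lie in $(-q,q)$, hence to vanish. Then $k_1-k_2$ is a multiple of $q_0$ of absolute value at most $q_0-1$, so $k_1=k_2$ and $(h_1-h_2) = -2(i_1-i_2)$; the constraint $h_1,h_2 \in \{0,1\}$ combined with the evenness of the right side yields $h_1 = h_2$ and $i_1 = i_2$. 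Reducing the original congruence modulo $q-2q_0+1$ (where $\mathfrak{g}_1, \mathfrak{g}_2$ vanish), and using $\gcd(q^2, q-2q_0+1) = 1$ together with $0 \le j \le q-2q_0$, finally forces $j_1 = j_2$.

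The main obstacle is spotting the divisibility of $\mathfrak{g}_1$ and $\mathfrak{g}_2$ by $(q-2q_0+1)$; once these factorizations are in hand, the CRT split of $\mathfrak{g}_0$ aligns naturally with the roles of $(h,i,k)$ versus $j$ among the generators, and the rest is pure bookkeeping of ranges.
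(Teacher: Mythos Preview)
Your proof is correct and follows essentially the same strategy as the paper: both count parameters to get $|A|\le\mathfrak{g}_0$ and then establish injectivity by reducing modulo the factors of $\mathfrak{g}_0=q(q-2q_0+1)$. The paper proceeds stepwise through the chain $q_0\mid 2q_0\mid q$ before passing to $q-2q_0+1$, whereas you invoke the CRT split directly and use the explicit factorizations $\mathfrak{g}_1=(q+q_0)(q-2q_0+1)$ and $\mathfrak{g}_2=(q+2q_0)(q-2q_0+1)$; this is a slightly cleaner packaging of the same idea, but not a genuinely different argument.
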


\begin{proof}
  By definition of $A$, there are $2q_0^2(q-2q_0+1) = \mathfrak{g}_0$ different four-tuples of parameters $(h,i,j,k)$ describing the elements of $A$, so $|A| \leq \mathfrak{g}_0$. The lemma follows once we show that elements of the form $h \mathfrak{g}_1 + i \mathfrak{g}_2 + j \mathfrak{g}_3 + k \mathfrak{g}_4$ are pairwise distinct modulo $\mathfrak{g}_0$ for distinct four-tuples $(h,i,j,k)$ satisfying
  $$0 \leq h \leq 1, \quad 0 \leq i \leq q_0-1, \quad 0 \leq j \leq q-2q_0, \quad 0 \leq k \leq q_0-1.$$
 Now let $a = h \mathfrak{g}_1 + i \mathfrak{g}_2 + j \mathfrak{g}_3 + k \mathfrak{g}_4$ and $a^\prime = h^\prime \mathfrak{g}_1 + i^\prime \mathfrak{g}_2 + j^\prime \mathfrak{g}_3 + k^\prime \mathfrak{g}_4$ be elements of $A$ and suppose that $a \equiv a^\prime \pmod{\mathfrak{g}_0}.$
 Recall that $q=2q_0^2$ and that $q$ divides $\mathfrak{g}_0.$ Hence the congruence $a \equiv a^\prime \pmod{\mathfrak{g}_0}$ implies that $a \equiv a^\prime \pmod{q_0}.$ This in turn is equivalent to the congruence $k \equiv k^\prime \pmod{q_0}$. Since $0 \le k < q_0$ and $0 \le k^\prime < q_0,$ we see that $k=k^\prime.$ Now working modulo $2q_0$, implies that $q_0h \equiv q_0 h^\prime \pmod{2q_0}.$ Since $0 \le h \le 1$ and $0 \le h^\prime \le 1,$ we obtain $h=h^\prime.$ Similarly working modulo $q$, implies that $i=i^\prime.$ At this point, we know that $j \mathfrak{g}_3 \equiv j^\prime \mathfrak{g}_3 \pmod{\mathfrak{g}_0}$. Dividing by $q$, we obtain that $j q \equiv j^\prime q \pmod{q-2q_0+1},$ whence $j \equiv j^\prime \pmod{q-2q_0+1}$ and therefore $j=j^\prime.$
\end{proof}

\begin{theorem}\label{main1}
For any $P \in \SK(\mathbb{F}_q)$, we have $H(P)=\langle q^2-2q_0 q+q, q^2-q_0 q +q_0, q^2-q+2q_0, q^2, q^2+1 \rangle.$
\end{theorem}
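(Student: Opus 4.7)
Since $\tilde{\mathcal{S}}(\mathbb{F}_q)$ is a single orbit under $\mathrm{Aut}(\tilde{\mathcal{S}})$, it suffices to prove $H(P_\infty)=H$, where $H:=\langle \mathfrak{g}_0,\mathfrak{g}_1,\mathfrak{g}_2,\mathfrak{g}_3,\mathfrak{g}_4\rangle$. The plan is to sandwich $g(H)$ between $g(\tilde{\mathcal{S}})$ from below and the Ap\'ery upper bound furnished by Lemma \ref{lem:apery} from above, and then show the two quantities coincide. For the lower bound, I would realize the five generators as pole orders of explicit functions at $P_\infty$: by Equations \eqref{xtilda}--\eqref{wtilda} the functions $x,y,z,w$ on $\mathcal{S}$ have pole orders $q$, $q+q_0$, $q+2q_0$, $q+2q_0+1$ at $Q_\infty$; since $\mathrm{pr}:\tilde{\mathcal{S}}\to\mathcal{S}$ is totally ramified at $Q_\infty$ with index $q-2q_0+1$, their pullbacks to $\tilde{\mathcal{S}}$ have pole orders $\mathfrak{g}_0,\mathfrak{g}_1,\mathfrak{g}_2,\mathfrak{g}_4$ at $P_\infty$, while $t$ itself contributes $\mathfrak{g}_3=q^2$. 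Hence $H\subseteq H(P_\infty)$ and $g(H)\ge g(\tilde{\mathcal{S}})=(q^3-2q^2+q)/2$.

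For the upper bound, I would invoke the fact that the set $A$ of Lemma \ref{lem:apery} lies in $H$, has cardinality $m_H=\mathfrak{g}_0$, and forms a complete set of residues modulo $\mathfrak{g}_0$. In each residue class, the Ap\'ery representative of $H$ is the minimum element of $H$ in that class, hence no larger than the corresponding element of $A$; summing over classes gives
\[g(H)\;=\;\sum_{b\in\mathrm{Ap}(H)}\Bigl\lfloor\tfrac{b}{\mathfrak{g}_0}\Bigr\rfloor\;\le\;\sum_{a\in A}\Bigl\lfloor\tfrac{a}{\mathfrak{g}_0}\Bigr\rfloor=:\Sigma.\]
If $\Sigma=g(\tilde{\mathcal{S}})$, the chain of inequalities collapses and we obtain $g(H)=g(\tilde{\mathcal{S}})$, whence $H=H(P_\infty)$ (and, as a byproduct, $A=\mathrm{Ap}(H)$ by the criterion just before Lemma \ref{lem:apery}).

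The remaining task, and the main obstacle, is the evaluation of $\Sigma$. Using $\mathfrak{g}_1=\mathfrak{g}_0+\alpha$, $\mathfrak{g}_2=\mathfrak{g}_0+2\alpha$, $\mathfrak{g}_3=\mathfrak{g}_0+\beta$, $\mathfrak{g}_4=\mathfrak{g}_0+\beta+1$, where $\alpha:=(q_0-1)q+q_0$ and $\beta:=(2q_0-1)q$, a generic element of $A$ parametrized by $(h,i,j,k)$ can be rewritten as
\[h\mathfrak{g}_1+i\mathfrak{g}_2+j\mathfrak{g}_3+k\mathfrak{g}_4=(h+i+j+k)\mathfrak{g}_0+(h+2i)\alpha+(j+k)\beta+k,\]
so that $\lfloor a/\mathfrak{g}_0\rfloor=(h+i+j+k)+\lfloor((h+2i)\alpha+(j+k)\beta+k)/\mathfrak{g}_0\rfloor$. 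The linear part $h+i+j+k$ sums to $\mathfrak{g}_0(q-1)/2$ over the parameter box of Lemma \ref{lem:apery}. The residual floor is evaluated by splitting the range of $j+k$ at the thresholds where $(h+2i)\alpha+(j+k)\beta+k$ crosses successive multiples of $\mathfrak{g}_0$; the relation $\alpha\beta\equiv\cdot \pmod{\mathfrak{g}_0}$ and the identities $q=2q_0^2$ make each subcase elementary. The expected total is $\Sigma=q(q-1)^2/2=g(\tilde{\mathcal{S}})$, which closes the argument. This final bookkeeping step, elementary but meticulous, is the primary technical hurdle.
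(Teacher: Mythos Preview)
Your overall strategy---sandwiching $g(H)$ between $g(\tilde{\mathcal{S}})$ from below (via the explicit pole orders) and $\Sigma:=\sum_{a\in A}\lfloor a/\mathfrak{g}_0\rfloor$ from above (via Lemma~\ref{lem:apery})---is exactly the paper's. The only difference is in how $\Sigma$ is evaluated. Your proposed case-splitting on where $(h+2i)\alpha+(j+k)\beta+k$ crosses successive multiples of $\mathfrak{g}_0$ would work, but is unnecessarily laborious. The paper sidesteps this entirely with the following observation: since Lemma~\ref{lem:apery} says $A$ is a \emph{complete} residue system modulo $\mathfrak{g}_0$, the remainders $a\bmod\mathfrak{g}_0$ run over $\{0,1,\dots,\mathfrak{g}_0-1\}$ exactly once, so
\[\Sigma \;=\; \frac{1}{\mathfrak{g}_0}\sum_{a\in A}\bigl(a-(a\bmod\mathfrak{g}_0)\bigr)\;=\;\frac{1}{\mathfrak{g}_0}\sum_{a\in A}a\;-\;\frac{1}{\mathfrak{g}_0}\sum_{r=0}^{\mathfrak{g}_0-1}r\;=\;\frac{1}{\mathfrak{g}_0}\sum_{a\in A}a\;-\;\frac{\mathfrak{g}_0-1}{2}.\]
Now $\sum_{a\in A}a$ factors trivially over the rectangular parameter box $(h,i,j,k)$, and with $m:=q-2q_0+1$ one gets a one-line closed form that simplifies to $g(\tilde{\mathcal{S}})$. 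This removes what you correctly identified as the main technical hurdle; no thresholds or floors need to be tracked at all.
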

\begin{proof}
As before, let $H$ denote the semigroup $\langle q^2-2q_0 q+q, q^2-q_0 q +q_0, q^2-q+2q_0, q^2, q^2+1 \rangle.$
As observed before, we know that $H(P)$ contains $H$ and hence $g(H) \ge g(\SK).$
From Lemma \ref{lem:apery}, we see that $g(H) \le \sum_{a \in A} \lfloor \frac{a}{\mathfrak{g}_0}\rfloor.$ Note that equality holds if and only if $A=\mathrm{Ap}(H).$

For simplicity let us denote $m = 2q_0^2-2q_0+1$, so that $\mathfrak{g}_0 = 2q_0^2 m$. Then:
  \begin{alignat*}{2}
    & \sum_{a \in A} \left\lfloor \frac{a}{\mathfrak{g}_0} \right\rfloor = \sum_{a \in A} \frac{a - a \text{ mod } \mathfrak{g}_0}{\mathfrak{g}_0} = \frac{1}{\mathfrak{g}_0} \sum_{a \in A} a - \frac{1}{\mathfrak{g}_0} \sum_{r=0}^{\mathfrak{g}_0-1} r \\
    & = \frac{1}{\mathfrak{g}_0} \sum_{h=0}^1 \sum_{i=0}^{q_0-1} \sum_{j=0}^{m-1} \sum_{k=0}^{q_0-1} ( h \mathfrak{g}_1 + i \mathfrak{g}_2 + j \mathfrak{g}_3 + k \mathfrak{g}_4 ) - \frac{1}{\mathfrak{g}_0} \cdot \frac{\mathfrak{g}_0(\mathfrak{g}_0-1)}{2} \\
    & = \frac{1}{\mathfrak{g}_0}\left(q_0^2m\mathfrak{g}_1+2q_0m \frac{q_0(q_0-1)}{2} \mathfrak{g}_2+2q_0^2 \frac{m(m-1)}{2} \mathfrak{g}_3+2q_0 m \frac{q_0(q_0-1)}{2} \mathfrak{g}_4 \right)-\frac{\mathfrak{g}_0-1}{2}\\
 %   & = \frac{1}{2q_0^2m} \sum_{h=0}^1 \sum_{i=0}^{q_0-1} \sum_{j=0}^{m-1} \sum_{k=0}^{q_0-1} (4q_0^4(h+i+j+k) -2q_0^3h - 2q_0^2i + q_0(h+2i) + k) \\
 %   & \quad - 2q_0^4 + 2q_0^3 - q_0^2 + \frac{1}{2}
 &=g(\SK).
  \end{alignat*}
Hence $g(H) \le g(\SK)$. Combining all the above, we see $g(H)=g(\SK),$ which implies that $H(P)=H.$
\end{proof}

\begin{corollary}
Let $P \in \SK(\mathbb{F}_q)$ and as before, let $A := \{ h \mathfrak{g}_1 + i \mathfrak{g}_2 + j \mathfrak{g}_3 + k \mathfrak{g}_4 \mid 0 \leq h \leq 1, 0 \leq i \leq q_0-1, 0 \leq j \leq q-2q_0, 0 \leq k \leq q_0-1 \}$. Then $\mathrm{Ap}(H(P))=A$. Further, $H(P)$ is a symmetric semigroup.
\end{corollary}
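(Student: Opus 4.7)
The plan is to apply directly the criterion stated in the paragraph preceding Lemma~\ref{lem:apery}: for a numerical semigroup $Z$ and a subset $A \subseteq Z$ of cardinality $m_Z$ forming a complete set of representatives for $\mathbb{Z}/m_Z\mathbb{Z}$, one has $A = \mathrm{Ap}(Z)$ if and only if $g(Z) = \sum_{a \in A} \lfloor a/m_Z\rfloor$. All the ingredients needed to invoke this criterion have already been assembled in the paper.

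First, by Theorem~\ref{main1} we have $H(P) = H$ and $m_{H(P)} = \mathfrak{g}_0$. Each element of $A$ is visibly a nonnegative integer combination of the generators $\mathfrak{g}_1,\mathfrak{g}_2,\mathfrak{g}_3,\mathfrak{g}_4$, so $A \subseteq H(P)$. By Lemma~\ref{lem:apery}, $A$ consists of $\mathfrak{g}_0$ elements forming a complete set of representatives modulo $\mathfrak{g}_0 = m_{H(P)}$. Finally, the computation performed inside the proof of Theorem~\ref{main1} establishes exactly $\sum_{a \in A}\lfloor a/\mathfrak{g}_0\rfloor = g(\SK) = g(H(P))$. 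The criterion therefore applies and yields $\mathrm{Ap}(H(P)) = A$.

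For symmetry, I would use the formula $c_{H(P)} = 1 + \max\mathrm{Ap}(H(P)) - m_{H(P)}$ recalled earlier. Since each $\mathfrak{g}_j$ is positive, the expression $h\mathfrak{g}_1 + i\mathfrak{g}_2 + j\mathfrak{g}_3 + k\mathfrak{g}_4$ is monotonically increasing in each of $h,i,j,k$, so its maximum on $A$ is attained at the corner $(h,i,j,k) = (1,q_0-1,q-2q_0,q_0-1)$. Substituting the explicit values of $\mathfrak{g}_1,\ldots,\mathfrak{g}_4$ and simplifying with the identity $q = 2q_0^2$ gives $\max A = q^3 - q^2 - 2q_0 q + 2q - 1$, whence
\[c_{H(P)} = 1 + \max A - \mathfrak{g}_0 = q^3 - 2q^2 + q = 2g(\SK) = 2g(H(P)),\]
which is precisely the definition of a symmetric semigroup.

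The only non-automatic step is the arithmetic simplification for $\max A$, but this is routine once one has observed that the maximum is attained at the corner of the parameter box. The conceptual work is already done: Lemma~\ref{lem:apery} supplies the combinatorial structure of $A$, the displayed calculation in Theorem~\ref{main1} supplies the genus identity, and the general criterion for Apéry sets then forces $A = \mathrm{Ap}(H(P))$ with no additional effort.
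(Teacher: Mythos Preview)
Your proof is correct and follows essentially the same approach as the paper's. The paper likewise deduces $\mathrm{Ap}(H(P))=A$ from the equality case of the inequality $g(H)\le\sum_{a\in A}\lfloor a/\mathfrak{g}_0\rfloor$ established in the proof of Theorem~\ref{main1}, and then verifies symmetry by computing $\max A-\mathfrak{g}_0=2g(\SK)-1$ at the same corner $(h,i,j,k)=(1,q_0-1,q-2q_0,q_0-1)$.
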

\begin{proof}
The statement $\mathrm{Ap}(H(P))=A$ follows from the proof of Theorem \ref{main1}. In particular, the largest gap of $H(P_\infty)$ is
  $$ \max\{z \in A\} - \mathfrak{g}_0 = \mathfrak{g}_1 + (q_0-1)\mathfrak{g}_2 + (q-2q_0)\mathfrak{g}_3 + (q_0-1)\mathfrak{g}_4 - \mathfrak{g}_0 =2g(\SK)-1. $$
%  Therefore, the conductor of $H(P_\infty)$ is
%  $$ \max(A) - g_0 + 1 = 8q_0^6 - 8q_0^4 + 2q_0^2 = q^3-2q^2+q = 2g(H(P_\infty)). $$
This means that the conductor of $H(P)$ is $2g(\SK)$ and hence that the semigroup is symmetric.
\end{proof}

We will later see that any $P \in \SK(\mathbb{F}_q)$ is a Weierstrass point by computing the Weierstrass semigroups for all points on $\SK$. However, using the symmetry of $H(P_\infty)$ that we just proved and \cite[Proposition 50]{Konto}, we can already conclude this now.
%\begin{corollary}
%  $H(P_\infty)$ has conductor $2g(H(P_\infty))$, i.e., the semigroup $H(P_\infty)$ is symmetric. In particular $\tilde{\mathcal{S}}(\mathbb{F}_q) \subseteq W$.
%\end{corollary}
%
%\begin{proof} Since $A$ is the Ap\'ery set of $H(P_\infty)$, the largest gap of $H(P_\infty)$ is
%  $$ \max(A) - g_0 = g_1 + (q_0-1)g_2 + (2q_0^2-2q_0)g_3 + (q_0-1)g_4 - g_0 = 8q_0^6 - 8q_0^4 + 2q_0^2 - 1. $$
%  Therefore, the conductor of $H(P_\infty)$ is
%  $$ \max(A) - g_0 + 1 = 8q_0^6 - 8q_0^4 + 2q_0^2 = q^3-2q^2+q = 2g(H(P_\infty)). $$
%The property $\tilde{\mathcal{S}}(\mathbb{F}_q) \subseteq W$ follows from the symmetry of $H(P_\infty)$ and \cite[Proposition 50]{Konto}.
%\end{proof}
%
%Now the main result of this section follows as a corollary.
%
%\begin{proposition} \label{main1}
%The Weierstrass semigroup of $\SK$ at $P_\infty$, and hence at every place centered at an $\mathbb{F}_q$-rational point of $\SK$, is generated by $q^2-2q_0 q+q$, $q^2-q_0 q +q_0$, $q^2-q+2q_0$, $q^2$ and $q^2+1$.
%\end{proposition}
%

\section{The Weierstrass semigroup for $P \in \SK(\mathbb{F}_{q^4}) \setminus \SK(\mathbb{F}_q)$}\label{sec:four}

Let $P$ be a point in $\SK(\mathbb{F}_{q^4}) \setminus \SK(\mathbb{F}_q)$ with $(x,y,t)$-coordinates $(a,b,c)$. The aim of this section is to prove the following theorem.
\begin{theorem}
  \label{teo:HPgen}
  The Weierstrass semigroup $H(P)$ is generated by the following $3q_0+3$ elements:
  \begin{align*}
    & \mathrm{g}_0 := q^2-q+1, \\
    & \mathrm{g}_1 := q^2-2q_0+1, \\
    & \mathrm{g}_2 := q^2-q_0+1, \\
    & \mathrm{g}_3 := q^2, \\
    & \mathrm{g}_4 := q^2+1, \\
    & \mathrm{f}_i := (i+1)q_0g_0 - ig_4 - 1 \quad i=0,\dots,2q_0-2, \\
    & \mathrm{h}_j := (2j+1)q_0g_0 - jg_4 - q_0 \quad j=0,\dots,q_0-2.
  \end{align*}
\end{theorem}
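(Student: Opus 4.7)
The plan is to prove $H(P)=H$, where $H:=\langle \mathrm{g}_0,\mathrm{g}_1,\mathrm{g}_2,\mathrm{g}_3,\mathrm{g}_4,\mathrm{f}_0,\ldots,\mathrm{f}_{2q_0-2},\mathrm{h}_0,\ldots,\mathrm{h}_{q_0-2}\rangle$ is the numerical semigroup generated by the listed $3q_0+3$ elements. The approach splits into two complementary parts. First, I exhibit for each generator an explicit rational function on $\tilde{\mathcal{S}}$ having pole only at $P$ of the corresponding order; this yields $H\subseteq H(P)$. Second, I compute the genus $g(H)$ and show it equals $g(\tilde{\mathcal{S}})$. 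Since $g(H(P))=g(\tilde{\mathcal{S}})$, the inclusion $H\subseteq H(P)$ combined with the equality $g(H)=g(H(P))$ forces $H=H(P)$.

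For the five principal generators $\mathrm{g}_0,\ldots,\mathrm{g}_4$, I use the function $\pi_P$ and the pulled-back Suzuki functions. Since $(\pi_P)_{\tilde{\mathcal{S}}}=(q^2+1)(P-P_\infty)$ and the cover $\mathrm{pr}:\tilde{\mathcal{S}}\to\mathcal{S}$ is unramified at $Q=\mathrm{pr}(P)$ (because $Q\notin\mathcal{S}(\mathbb{F}_q)$), the quotients $1/\pi_P,\ \tilde{x}_Q/\pi_P,\ \tilde{y}_Q/\pi_P,\ \tilde{z}_Q/\pi_P,\ \tilde{w}_Q/\pi_P$ have pole orders exactly $\mathrm{g}_4,\mathrm{g}_3,\mathrm{g}_2,\mathrm{g}_1,\mathrm{g}_0$ at $P$ respectively, and each is regular elsewhere on $\tilde{\mathcal{S}}$. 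One checks $v_{P_\infty}\ge 0$ in each case from \eqref{xtilda}--\eqref{wtilda} together with the Fundamental-Equation computation $(q+2q_0+1)(q-2q_0+1)=q^2+1$.

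The generators $\mathrm{f}_i$ and $\mathrm{h}_j$ require much more delicate constructions, and producing them is the main technical obstacle. The key tool is the Frobenius identity obtained by raising the defining relation of $\pi_P$ to the $q_0$-th power in characteristic $2$:
\[
\pi_P^{q_0}=\tilde{w}_Q^{q_0}+A^{qq_0}\tilde{z}_Q^{q_0}+A^{(q+2q_0)q_0}\tilde{x}_Q^{q_0}+c^{q^2q_0}(t+c)^{q_0}.
\]
Combining this (and its higher-power analogues) with a careful local analysis at $P$, one constructs rational expressions in $\tilde{x}_Q,\tilde{y}_Q,\tilde{z}_Q,\tilde{w}_Q,(t+c)$ and $\pi_P$ whose pole at $P$ has exactly the required order $\mathrm{f}_i$ or $\mathrm{h}_j$. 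The difficulty is that a naive choice of exponents producing the correct pole at $P$ generically introduces a new pole at $P_\infty$; cancelling it requires exploiting the many vanishings in the local expansion $t+c=\sum_{k\ge 1}e_k u^k$ at $P$ (with uniformizer $u=\tilde{x}_Q$). These vanishings follow from $t^{q-2q_0+1}=x^q+x$ having no intermediate monomials and from the vanishing of the binomial coefficients $\binom{q-2q_0+1}{k}\bmod 2$ for most small $k$ (by Lucas' theorem), which in turn forces $e_k=0$ for many $k$ and produces the cancellations at $P_\infty$.

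Finally, I compute the Ap\'ery set of $H$ relative to its multiplicity $m_H=\mathrm{g}_0=q^2-q+1$. Using the explicit generators, I parameterize a representative in each of the $\mathrm{g}_0$ residue classes modulo $\mathrm{g}_0$, obtaining $\mathrm{Ap}(H)$ as an explicit set of $\mathrm{g}_0$ combinations involving bounded exponents on $\mathrm{g}_1,\ldots,\mathrm{g}_4,\mathrm{f}_i,\mathrm{h}_j$. Summing $\lfloor a/m_H\rfloor$ over $a\in\mathrm{Ap}(H)$ then yields $g(H)=(q^3-2q^2+q)/2=g(\tilde{\mathcal{S}})$. Together with $H\subseteq H(P)$ from the preceding steps, this forces $H=H(P)$.
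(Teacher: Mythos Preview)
Your overall strategy---show $H\subseteq H(P)$ by exhibiting functions, then compute $g(H)=g(\SK)$ via the Ap\'ery set---is exactly the paper's, and your treatment of $\mathrm{g}_0,\ldots,\mathrm{g}_4$ is correct (your use of $\tilde z_Q/\pi_P$ for $\mathrm{g}_1$ is a harmless variant of the paper's $\tilde w_{\Phi^3(Q)}/\pi_P$).

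However, the proposal has a genuine gap at the two hard steps. For $\mathrm{f}_i$ and $\mathrm{h}_j$ you do not actually construct any functions; you only assert that ``one constructs rational expressions'' after a Frobenius identity and a Lucas-type analysis of the expansion of $t+c$. That identity $\pi_P^{q_0}=\tilde w_Q^{q_0}+\cdots$ is true but does not by itself produce a function with a single pole at $P$ of order $\mathrm{f}_i$ or $\mathrm{h}_j$, and your sketch gives no indication of which combination to take or why the $P_\infty$-pole cancels. The paper's mechanism is quite different and is what makes the argument go through: it works on the Suzuki curve using the \emph{Frobenius-shifted} functions $\tilde w_{\Phi(Q)},\tilde w_{\Phi^2(Q)},\tilde w_{\Phi^3(Q)}$, exploiting the exact divisor $(\tilde w_{\Phi^k(Q)})_{\mathcal S}=q\Phi^k(Q)+2q_0\Phi^{k+1}(Q)+\Phi^{k+2}(Q)-(q+2q_0+1)Q_\infty$. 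Concretely, for $\mathrm{f}_i$ one takes $\alpha_i=\tilde w_Q^{(i+1)q_0}\tilde w_{\Phi^2(Q)}/\tilde w_{\Phi(Q)}^{\,i+1}$ on $\mathcal S$, checks its divisor is effective away from $Q_\infty$, and then $\alpha_i/\pi_P^{(i+1)q_0-i}$ has a unique pole at $P$ of the right order. For $\mathrm{h}_j$ an additional ingredient is needed: an explicit $\mathbb F_q$-rational function $F_0\in L((q+2q_0)Q_\infty)$ vanishing to order $q_0$ at all of $Q,\Phi(Q),\Phi^2(Q),\Phi^3(Q)$ simultaneously (Lemma~\ref{lem:PrincDiv}). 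None of these Frobenius-shifted objects appear in your list $\tilde x_Q,\tilde y_Q,\tilde z_Q,\tilde w_Q,(t+c),\pi_P$, and your local-expansion idea does not substitute for them.

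The Ap\'ery-set step is likewise only asserted. In the paper this is a substantial case analysis (Lemmas~\ref{lem:phi1}--\ref{lem:sumeqg}): one defines an explicit function $\varphi:\{0,\ldots,\mathrm{g}_0-1\}\to\mathbb N$ by writing $i=lq+kq_0+j$ (or the mirrored expression of $\mathrm{g}_0-1-i$), proves in several subcases that $\varphi(i)\mathrm{g}_0+i\in H$, establishes the symmetry $\varphi(i)+\varphi((q-1)^2-i)=q-1$, and sums to obtain $\sum_i\varphi(i)=g(\SK)$. Without this, the equality $g(H)=g(\SK)$ is unproved and the inclusion $H\subseteq H(P)$ cannot be upgraded to equality.
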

The strategy of the proof will be to first show that the $3q_0+3$ elements mentioned in Theorem \ref{teo:HPgen} are in $H(P)$, then showing that the semigroup generated by them has $g(\SK)$ gaps.
As before we write $Q=\mathrm{pr}(P)$, the point of $\mathcal S$ lying under $P$ in the cover $\mathrm{pr}: \SK \to \mathcal S$. %, the point of $\mathcal{S}$ such that $P|Q$ in $\SK | \mathcal{S}$.
\begin{lemma}  \label{lem:PrincDiv}
There exists a divisor $D \geq 0$ on $\mathcal{S}$ satisfying $\Phi^i(Q) \not \in \Supp(D)$ for $i=0, \dots, 3$ such that:
  \begin{itemize}
  \item [a)] %There exists a divisor $D \geq 0$ on $\mathcal{S}$ such that $\Phi^i(Q) \not \in \Supp(D)$ for $i=0, \dots, 3$ and
  The divisor $q_0 \sum_{i=0}^{3} \Phi^i(Q) + D - (q+2q_0+1)Q_\infty$ is principal.
  \item [b)] The divisor $((2i+1)qq_0+q_0)Q + (iq+q_0) \Phi(Q) + (q_0-1-i) \Phi^3(Q) + D - ((2i+1)q_0-i)(q+2q_0+1) Q_\infty$ is principal for all $i \geq 0$.
  \end{itemize}
\end{lemma}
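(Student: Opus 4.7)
The plan is to reduce part (b) to part (a) via principal–divisor manipulations, and then to establish part (a) by an explicit construction that exploits the $\mathbb{F}_{q^4}$-maximality of $\mathcal{S}$. Write $A := (q+2q_0+1)Q_\infty - q_0 \sum_{i=0}^3 \Phi^i(Q)$ for the divisor of part (a) and $A_i$ for the corresponding divisor of part (b); both have degree $m := q - 2q_0 + 1$. Parts (a) and (b) assert that $A$ and $A_i$ are linearly equivalent to the \emph{same} effective divisor $D$ of degree $m$ supported off the Frobenius orbit of $Q$. My first goal is therefore the equivalence $A_i \sim A$ for every $i \geq 0$.

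To verify this, I invoke the Fundamental Equation in the form supplied by $\tilde{w}_Q$, namely $qQ + 2q_0 \Phi(Q) + \Phi^2(Q) \sim (q+2q_0+1) Q_\infty$, which gives $\Phi^2(Q) \sim (q+2q_0+1) Q_\infty - qQ - 2q_0 \Phi(Q)$. Applying $\Phi$ and using $(2q_0)^2 = 2q$, I obtain
\[ \Phi^3(Q) \sim \bigl(1 - q(1+2q_0)\bigr) Q_\infty + 2qq_0\, Q + q \Phi(Q). \]
A direct computation shows that $A_i - A_0$ equals $i$ times a fixed degree-zero divisor, namely $(2q_0-1)(q+2q_0+1)Q_\infty - 2qq_0\, Q - q\Phi(Q) + \Phi^3(Q)$. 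Substituting the expression above for $\Phi^3(Q)$ reduces this to the zero divisor (the contributions at $Q$ and $\Phi(Q)$ cancel because $2q_0 \cdot q_0 = q$, and the $Q_\infty$-coefficient collapses because $(2q_0-1)(q+2q_0+1) = 2qq_0 + q - 1$). The analogous substitution in $A_0 - A$ also gives the zero divisor. Hence $A_i \sim A$ for all $i \geq 0$, reducing (b) to (a).

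It remains to handle part (a). Using the same reductions, $A$ is equivalent to $(q^2+1)Q_\infty - \beta Q - \gamma \Phi(Q)$, where $\beta := q^2 - qq_0 + q_0$ and $\gamma := qq_0 - q + q_0 = q_0 m$. The key input is that $\mathcal{S}$ is $\mathbb{F}_{q^4}$-maximal: the Frobenius $\Phi^4$ acts as $-q^2$ on $\mathrm{Pic}^0(\mathcal{S})$, so the hypothesis $\Phi^4(Q) = Q$ yields $(q^2+1)(Q - Q_\infty) \sim 0$, a relation realized concretely by the function $N_{\SK/\mathcal{S}}(\pi_P)$ on $\mathcal{S}$. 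Combining this relation with the factorization $q^2+1 = m(q+2q_0+1)$ and with the known divisors of $\tilde{w}_Q$, $\tilde{z}_Q$, $\tilde{y}_Q$ (which control the vanishing orders at $Q, \Phi(Q), \Phi^2(Q)$), I construct a function $\psi$ on $\mathcal{S}$ whose divisor is $q_0 \sum \Phi^i(Q) + D - (q+2q_0+1) Q_\infty$ for an explicit effective $D$ of degree $m$. A local analysis at each $\Phi^i(Q)$, using the Suzuki equation together with $A := a^q + a \neq 0$ (valid since $Q \notin \mathcal{S}(\mathbb{F}_q)$), then confirms that $D$ has support disjoint from $\{\Phi^i(Q)\}_{i=0}^3$. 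The main obstacle is precisely this construction: a Riemann--Roch estimate for $\ell((q+2q_0+1)Q_\infty - q_0 \sum \Phi^i(Q))$ is generally negative, so the existence of $\psi$ is not automatic and relies essentially on the $\mathbb{F}_{q^4}$-rationality of $Q$ on the maximal curve $\mathcal{S}$.
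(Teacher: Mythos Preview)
Your reduction of (b) to (a) is correct and in substance identical to the paper's: the paper writes the divisor $D_i$ explicitly as $(F_0)_{\mathcal S} + (2i+1)q_0\,(\tilde w_Q)_{\mathcal S} - (i+1)\,(\tilde w_{\Phi(Q)})_{\mathcal S}$, which is precisely the linear-equivalence computation you carry out with the Fundamental Equation. So far so good.

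The gap is in part (a). You announce a function $\psi$ with divisor $q_0\sum_{i=0}^3\Phi^i(Q)+D-(q+2q_0+1)Q_\infty$, but you never construct it; you only list the ingredients ($(q^2+1)(Q-Q_\infty)\sim 0$, the divisors of $\tilde y_Q,\tilde z_Q,\tilde w_Q$) and defer the ``local analysis.'' As you yourself note, a Riemann--Roch count here is negative, so nothing short of an explicit construction will do. The difficulty is that the known building blocks $\tilde y_Q,\tilde z_Q,\tilde w_Q$ and their Frobenius translates have \emph{asymmetric} vanishing orders along the orbit $\{Q,\Phi(Q),\Phi^2(Q),\Phi^3(Q)\}$, whereas the target divisor requires order \emph{exactly} $q_0$ at all four points simultaneously; no monomial combination achieves this, and a linear combination requires controlling four local expansions at once.

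The paper's solution is a trick you are missing: use the transitivity of $\Aut(\mathcal S)$ on $\mathcal S(\mathbb F_{q^4})\setminus\mathcal S(\mathbb F_q)$ to assume without loss of generality that $a\in\mathbb F_{q^2}\setminus\mathbb F_q$. Under that assumption the paper writes down an explicit function
\[
F_0=\alpha\,\tilde y_Q + x\,\tilde x_Q^{2q_0} + (y+b)^{2q_0},\qquad \alpha:=(a^q+a)^{q_0},
\]
and checks that $F_0$ is actually defined over $\mathbb F_q$. The $\mathbb F_q$-rationality then forces $v_{\Phi^i(Q)}(F_0)=v_Q(F_0)=q_0$ for every $i$, giving the symmetric vanishing pattern for free, and one reads off $v_{Q_\infty}(F_0)=-(q+2q_0)$. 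Your proposal never invokes this normalization, and without it (or some substitute) the construction of $\psi$ remains an assertion rather than a proof.
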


\begin{proof}
a) Using the transitivity of the automorphism group of $\mathcal S$ on $\mathcal{S}(\mathbb{F}_{q^4})\setminus \mathcal{S}(\mathbb{F}_{q})$, we may assume without loss of generality that $a \in \mathbb{F}_{q^2} \setminus \mathbb{F}_q$. Define $\alpha := (a^q + a)^{q_0} \in \mathbb{F}_q$ and let
    \begin{equation}
      \label{eq:F0}
      F_0 := \alpha (a^{q_0}x + y + a^{q_0+1} + b) + x(x+a)^{2q_0} + (y+b)^{2q_0}.
    \end{equation}
    Observe that $F_0$ is defined over $\mathbb{F}_q$, since Equation \eqref{eq:F0} can also be expressed as
    $$ F_0 = a^{(q+1)q_0} x + \alpha y + y^{2q_0} + x^{2q_0+1} + (\alpha (a^{q_0+1}+b) + b^{2q_0}),$$
    where
    %$(\alpha a^{q_0} + a^{2q_0})$ lies in $\mathbb{F}_q$ because $$ (\alpha a^{q_0} + a^{2q_0})^q + (\alpha a^{q_0} + a^{2q_0}) = \alpha (a^q + a)^{q_0} + (a^q + a)^{2q_0} = 2\alpha^2 = 0 $$ and
    $(\alpha (a^{q_0+1}+b) + b^{2q_0})$ lies in $\mathbb{F}_q$ because
    $$ (\alpha (a^{q_0+1}+b) + b^{2q_0})^q + (\alpha (a^{q_0+1}+b) + b^{2q_0}) = \alpha (a^{(q_0+1)q} + a^{q_0+1}) + \alpha (b^q + b) + (b^q + b)^{2q_0} = $$
    $$ \alpha (a^{(q_0+1)q} + a^{q_0+1}) + \alpha a^{q_0} (a^q + a) + (a^{q_0} (a^q + a))^{2q_0} =
    %\alpha (a^{(q_0+1)q} + a^{q+q_0}) + (a^{q_0} (a^q + a))^{2q_0} = $$ $$
    \alpha^2 a^q + a^q \alpha^2 = 0. $$
    Recalling Equation \eqref{ytilda}, we have
    \begin{equation}
      \label{eq:vQF0}
      v_Q(F_0) \geq \min \{ v_Q(a^{q_0}x + y + a^{q_0+1} + b), v_Q(x(x+a)^{2q_0}), v_Q((y+b)^{2q_0}) \} = v_Q(\tilde{y}_Q) = q_0.
    \end{equation}
    Inequality \eqref{eq:vQF0} is in fact an equality, since $x(x+a)^{2q_0}$ and $(y+b)^{2q_0}$ have valuation at $Q$ larger than or equal to $2q_0$. The valuation of $F_0$ at $\Phi^i(Q)$, for $i=0, \dots, 3$ can be obtained from Inequality \eqref{eq:vQF0} and from the fact that $F_0$ is defined over $\mathbb{F}_q$, as
    $$ v_{\Phi^i(Q)}(F_0) = v_Q(\Phi^{-i}(F_0)) = v_Q(F_0) = q_0. $$
    Finally, observe that $Q_\infty$ is the only pole of $F_0$ and that
    $$ v_{Q_\infty}(F_0) = v_{Q_\infty}(x^{2q_0+1}+y^{2q_0}) = v_{Q_\infty}(z) = - (q+2q_0). $$
    Therefore there exists an effective divisor $\tilde{D}$ with support not containing $\Phi^i(Q)$ for $i = 0,\dots,3$ such that
    $$ (F_0)_{\mathcal S} = q_0 \sum_{i=0}^{3} \Phi^i(Q) + \tilde{D} - (q+2q_0)Q_\infty. $$
    The conclusion follows after setting $D := \tilde{D} + Q_\infty$.

b) Let us write $$D_i:=((2i+1)qq_0+q_0)Q + (iq+q_0) \Phi(Q) + (q_0-1-i) \Phi^3(Q) + D - ((2i+1)q_0-i)(q+2q_0+1) Q_\infty.$$
 Using part a) and Equation \eqref{wtilda}, we observe that $D_i = (F_0)_{\mathcal{S}} + (\tilde{w}_Q^{(2i+1)q_0})_{\mathcal{S}} - (\tilde{w}_{\Phi(Q)}^{i+1})_{\mathcal{S}}$.
\end{proof}

\begin{proposition}
  \label{lem:gensinHP}
The integers  $\mathrm{g}_0, \mathrm{g}_1, \mathrm{g}_2, \mathrm{g}_3, \mathrm{g}_4, \mathrm{f}_0, \dots, \mathrm{f}_{2q_0-2}, \mathrm{h}_0, \dots, \mathrm{h}_{q_0-2}$ are elements of $H(P)$.
\end{proposition}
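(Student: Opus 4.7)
My plan is to exhibit, for each of the $3q_0+3$ claimed generators $n$, a rational function on $\SK$ whose pole divisor is exactly $nP$. The universal building block is the function $\pi_P$ of Equation \eqref{eq:div_pi}: since $P\in\SK(\mathbb{F}_{q^4})$, we have $\Phi^4(P)=P$, so $(\pi_P)_\SK=(q^2+1)(P-P_\infty)$. Thus $\pi_P$ has its only zero at $P$ and its only pole at $P_\infty$, both of order $q^2+1$. Consequently, if $F$ is a function on $\SK$ whose only pole is at $P_\infty$ and whose pole order there is at most $c(q^2+1)$, then $F/\pi_P^c$ is a function with only pole at $P$ of order $c(q^2+1)-v_P(F)$. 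The remainder of the proof is a choice of suitable $F$ for each generator.

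For the five ``basic'' generators: take $F=1,\ c=1$ to obtain $\mathrm{g}_4=q^2+1\in H(P)$. Take $F=t-t(P),\ c=1$ to obtain $\mathrm{g}_3=q^2\in H(P)$; here one needs $v_P(t-t(P))=1$, which follows because $\mathrm{pr}:\SK\to\mathcal{S}$ is unramified at $P$ (as $Q=\mathrm{pr}(P)\notin\mathcal{S}(\mathbb{F}_q)$), so $\tilde{x}_Q$ is a uniformizer at $P$ and the differential $dt$ is a nonzero multiple of $dx$ at $P$. Taking $F\in\{\mathrm{pr}^*\tilde{w}_Q,\mathrm{pr}^*\tilde{z}_Q,\mathrm{pr}^*\tilde{y}_Q\}$ with $c=1$ produces $\mathrm{g}_0,\mathrm{g}_1,\mathrm{g}_2$: using \eqref{xtilda}--\eqref{wtilda}, the unramified pullback at $Q$, and the total ramification of index $q-2q_0+1$ at $Q_\infty$, each pullback has pole at $P_\infty$ of order at most $q^2+1$ (equal to $q^2+1$ exactly for $\tilde{w}_Q$, via $(q+2q_0+1)(q-2q_0+1)=q^2+1$) and $v_P$ equal to $q,\ 2q_0,\ q_0$ respectively, yielding pole orders $\mathrm{g}_0,\mathrm{g}_1,\mathrm{g}_2$ at $P$.

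For $\mathrm{h}_j$ with $j\in\{0,\ldots,q_0-2\}$, I apply Lemma \ref{lem:PrincDiv}(b) with $i=j$: the function $G_j$ it supplies has divisor supported at $Q,\Phi(Q),\Phi^3(Q),Q_\infty$ together with an effective divisor $D$ disjoint from these points, and the pole at $Q_\infty$ has the ``compatible'' multiplicity $((2j+1)q_0-j)(q+2q_0+1)$. Setting $F=\mathrm{pr}^*G_j$ and $c=(2j+1)q_0-j$ produces a function with pole only at $P$ of order $c(q^2+1)-((2j+1)qq_0+q_0)$, which simplifies to $\mathrm{h}_j$.

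For $\mathrm{f}_i$ with $i\in\{0,\ldots,2q_0-2\}$ the same recipe applies, with $c=(i+1)q_0-i$, once we produce a function $H_i$ on $\mathcal{S}$ with $v_Q(H_i)=(i+1)q_0q+1$, $v_{Q_\infty}(H_i)=-((i+1)q_0-i)(q+2q_0+1)$, and non-negative valuation everywhere else. I plan to construct $H_i$ by modifying the product $F_0\cdot\tilde{w}_Q^{(2i+1)q_0}/\tilde{w}_{\Phi(Q)}^{i+1}$ of Lemma \ref{lem:PrincDiv}(b), inserting a factor of $\tilde{x}_Q$ to supply the desired ``$+1$'' in $v_Q$ and rebalancing the exponents of $F_0,\tilde{w}_Q,\tilde{w}_{\Phi(Q)}$ so as to absorb the extra $-q$ in $v_{Q_\infty}$ while keeping the coefficients at $\Phi(Q),\Phi^2(Q),\Phi^3(Q)$ non-negative. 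The main technical obstacle is precisely this bookkeeping check: verifying simultaneously, for every $i$ in the full range $0\le i\le 2q_0-2$, that the coefficients of the resulting divisor at $\Phi(Q),\Phi^2(Q),\Phi^3(Q)$ and on the effective error divisor are all non-negative.
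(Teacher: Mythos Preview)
Your treatment of $\mathrm{g}_0,\dots,\mathrm{g}_4$ and of the $\mathrm{h}_j$ is correct and essentially matches the paper (you use $\tilde z_Q$ and $t-t(P)$ where the paper uses $\tilde w_{\Phi^3(Q)}$ and $\tilde x_Q$, but either pair works).

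The plan for $\mathrm{f}_i$, however, has a genuine gap, and it is not merely unfinished bookkeeping. No monomial of the shape $F_0^{\,a}\,\tilde x_Q\,\tilde w_Q^{\,b}\,\tilde w_{\Phi(Q)}^{-c}$ can simultaneously achieve $v_Q=(i+1)q_0q+1$, $v_{Q_\infty}\ge -((i+1)q_0-i)(q+2q_0+1)$, and non-negativity at $\Phi(Q),\Phi^2(Q),\Phi^3(Q)$. Already for $i=0$ the constraints are incompatible: $v_Q=q_0q+1$ forces $aq_0+bq=q_0q$, hence $a=2q_0(q_0-b)$; the condition $v_{\Phi^2(Q)}=aq_0+b-2cq_0+1\ge 0$ then caps $c$ so tightly that the pole order at $Q_\infty$ always exceeds $q_0(q+2q_0+1)$. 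The underlying reason is that $\tilde x_Q$ contributes $q$ to the pole at $Q_\infty$ while adding only $1$ at $Q$ (and at most $1$ at $\Phi^2(Q)$), which is far too costly relative to the degree budget; the factors $F_0,\tilde w_Q,\tilde w_{\Phi(Q)}$ cannot absorb this mismatch without creating a pole at $\Phi^2(Q)$ or $\Phi^3(Q)$.

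The paper avoids this with a different and much simpler construction: it takes
\[
\alpha_i=\frac{\tilde w_Q^{(i+1)q_0}\,\tilde w_{\Phi^2(Q)}}{\tilde w_{\Phi(Q)}^{\,i+1}}.
\]
The key point you are missing is that $P\in\SK(\mathbb{F}_{q^4})$ forces $Q\in\mathcal S(\mathbb{F}_{q^4})$, so $\Phi^4(Q)=Q$; hence the term $\Phi^4(Q)$ in $(\tilde w_{\Phi^2(Q)})_{\mathcal S}$ lands back at $Q$ and supplies the needed ``$+1$'' in $v_Q(\alpha_i)=(i+1)q_0q+1$, while the pole at $Q_\infty$ is automatically of the right shape $((i+1)q_0-i)(q+2q_0+1)$. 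One then checks that the coefficients at $\Phi^2(Q)$ and $\Phi^3(Q)$ are $q-(i+1)q_0\ge q_0>0$ and $2q_0-(i+1)\ge 1$ for $0\le i\le 2q_0-2$. No $\tilde x_Q$ or $F_0$ is needed for the $\mathrm{f}_i$.
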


\begin{proof}
  The first five values $\mathrm{g}_0, \mathrm{g}_1, \mathrm{g}_2, \mathrm{g}_3, \mathrm{g}_4$ can be obtained as pole orders at $P$ of the functions $\gamma_0 = \tilde{w}_Q \cdot \pi_P^{-1}$, $\gamma_1 = \tilde{w}_{\Phi^3(Q)} \cdot \pi_P^{-1}$, $\gamma_2 = \tilde{y}_Q \cdot \pi_P^{-1}$, $\gamma_3 = \tilde{x}_Q \cdot \pi_P^{-1}$, $\gamma_4 = \pi_P^{-1}$ respectively. It can be easily seen from Equations \eqref{ytilda}, \eqref{wtilda}, and \eqref{eq:div_pi} that such functions are regular outside $P$.

  Let us prove now that ${\mathrm{f}_i}$ is in $H(P)$ for $i=0, \dots, 2q_0-2$. For each $i=0, \dots, 2q_0-2$ define:
  $$ \alpha_i := \frac{\tilde{w}_Q^{(i+1)q_0} \cdot \tilde{w}_{\Phi^2(Q)}}{\tilde{w}_{\Phi(Q)}^{(i+1)}}. $$
  By Equation \eqref{wtilda}, the divisor of $\alpha_i$ in $\mathcal{S}$ is
  \begin{align*}
    (\alpha_i)_{\mathcal{S}} = ((i+1)qq_0+1) Q + (q-(i+1)q_0) \Phi^2(Q) + (2q_0-(i+1)) \Phi^3(Q) - ((i+1)q_0-i)(q+2q_0+1) Q_\infty.
  \end{align*}
  Note that the assumption $i \leq 2q_0-2$ implies that $(q-(i+1)q_0) \geq q_0 > 0$ and $(2q_0-(i+1)) \geq 1$. Since $P$ is unramified and $P_\infty$ is totally ramified in the cover $\mathrm{pr}: \SK \to \mathcal{S}$, we have:
  $$ \begin{cases}
    v_P(\alpha_i) = v_Q(\alpha_i) = (i+1)qq_0+1, & \\
    v_{P_\infty}(\alpha_i) = v_{Q_\infty}(\alpha_i) \cdot (q-2q_0+1) = - ((i+1)q_0-i)(q^2+1), & \\
    v_R(\alpha_i) \geq 0 & \quad \text{for } R \neq P, P_\infty.
  \end{cases} $$
  Therefore the function $\beta_i := \alpha_i / \pi_P^{(i+1)q_0-i}$ satisfies
  $$ \begin{cases}
    v_P(\beta_i) = ((i+1)qq_0+1) - ((i+1)q_0-i)(q^2+1) = -\mathrm{f}_i, & \\
    v_{P_\infty}(\beta_i) = - ((i+1)q_0-i)(q^2+1) + ((i+1)q_0-i)(q^2+1) = 0, & \\
    v_R(\beta_i) = v_R(\alpha_i) \geq 0 & \quad \text{for } R \neq P, P_\infty.
  \end{cases} $$
  Hence, $\beta_i$ is a function having a unique pole in $P$ of order $\mathrm{f}_i$.

  Let us finally prove that $\mathrm{h}_i$ is in $H(P)$ for $i=0, \dots, q_0-2$. By Lemma \ref{lem:PrincDiv} part b), there exists a function $\delta_i \in \mathbb{F}_{q^4}(\mathcal{S})$ and some effective divisor $D$ such that
  $$ (\delta_i)_{\mathcal S} = D_i := ((2i+1)qq_0+q_0)Q + (iq+q_0) \Phi(Q) + (q_0-1-i) \Phi^3(Q) + D - ((2i+1)q_0-i)(q+2q_0+1) Q_\infty. $$
  The assumption $i \leq q_0-2$ guarantees that $\delta_i$ is regular outside $Q_\infty$. Recalling Equation \eqref{eq:div_pi}, the function $\eta_i := \delta_i / \pi_P^{(2i+1)q_0-i}$ satisfies
  $$ \begin{cases}
    v_P(\eta_i) = ((2i+1)qq_0+q_0) - ((2i+1)q_0-i)(q^2+1) = -\mathrm{h}_i, & \\
    v_{P_\infty}(\eta_i) = - ((2i+1)q_0-i)(q^2+1) + ((2i+1)q_0-i)(q^2+1) = 0, & \\
    v_R(\eta_i) = v_R(\delta_i) \geq 0 & \quad \text{for } R \neq P, P_\infty.
  \end{cases} $$
  Hence, $\eta_i$ is a function having a unique pole in $P$ of order $\mathrm{h}_i$.
\end{proof}

A consequence of Proposition \ref{lem:gensinHP} is that the numerical semigroup
$$ H := \langle \mathrm{g}_0, \mathrm{g}_1, \mathrm{g}_2, \mathrm{g}_3, \mathrm{g}_4, \mathrm{f}_i, \mathrm{h}_j \mid i=0,\dots,2q_0-2 \text{ and } j=0,\dots,q_0-2 \rangle $$
is contained in $H(P)$. To show that equality $H = H(P)$ holds, from which Theorem \ref{teo:HPgen} follows, it suffices to prove that $H$ has the same genus of $H(P)$, namely that $g(H) = g(H(P)) = \frac{1}{2}q(q-1)^2$.

Observe that $\mathrm{g}_0$ is the smallest generator of $H$; in particular $\mathrm{g}_0$ is the multiplicity of $H$ and the Ap\'ery set $\mathrm{Ap}(H)$ consists of $\mathrm{g}_0$ distinct elements. We now want to find a map $\varphi : \{ 0,\dots,\mathrm{g}_0-1 \} \to \mathbb{N}$ such that any $a \in \mathrm{Ap}(H)$ can be expressed as $a = \varphi(i) \mathrm{g}_0 + i$ for some $i \in \{ 0,\dots,\mathrm{g}_0-1 \}$.

\begin{lemma}
  \label{lem:phi1}
  Let $i \in \{ 0, \dots, \frac{q(q-2)}{2} \}$ and write $i = lq+kq_0+j$ with $j \in \{ 0,\dots,q_0-1 \}$, $k \in \{ 0,\dots,2q_0-1 \}$ and $l \geq 0$. Define:
  $$ \varphi_1(i) =
  \begin{cases}
    l & \quad \text{if } j = 0 \text{ and } k = 0, \\
    l+1 + \max \{ q - q_0(k + 2l + 2), 0 \} & \quad \text{if } j = 0 \text{ and } k \neq 0, \\
    l+1 + \max \{ q - q_0(\lceil \frac{k}{2} \rceil + j + l + 1), 0 \} & \quad \text{if } j \neq 0.
  \end{cases}
  $$
  Then $\varphi_1(i)\mathrm{g}_0 + i$ belongs to $H$.
\end{lemma}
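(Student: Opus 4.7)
The plan is to exhibit, for each $i$ in the stated range, an explicit nonnegative integer combination of the generators of $H$ that equals $\varphi_1(i)\mathrm{g}_0 + i$. The backbone identity is $\mathrm{g}_4 - \mathrm{g}_0 = q$, which lets the term $lq$ in $i$ be absorbed as $l\mathrm{g}_4$; after this substitution, the problem reduces to showing that $(\varphi_1(i) - l)\mathrm{g}_0 + kq_0 + j \in H$. The two auxiliary identities that will do the heavy lifting are
\[r q_0 \mathrm{g}_0 = \mathrm{f}_{r-1} + (r-1)\mathrm{g}_4 + 1 \quad (1 \le r \le 2q_0-1), \qquad (2s+1)q_0 \mathrm{g}_0 = \mathrm{h}_s + s\mathrm{g}_4 + q_0 \quad (0 \le s \le q_0-2),\]
obtained directly from the definitions of $\mathrm{f}_{r-1}$ and $\mathrm{h}_s$.

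Case $j = k = 0$ is immediate: $\varphi_1(i)\mathrm{g}_0 + i = l\mathrm{g}_4$. For the subcase $j = 0, k \neq 0, m = 0$ (where the hypothesis $k + 2l + 2 \ge 2q_0$ holds), I search for $a_1, a_2, a_4 \ge 0$ with $a_1 + a_2 + a_4 = l+1$ and $2a_1 + a_2 = 2q_0 - k$, giving $\varphi_1(i)\mathrm{g}_0 + i = a_1\mathrm{g}_1 + a_2\mathrm{g}_2 + a_4\mathrm{g}_4$; a short calculation with the two defining inequalities shows the hypothesis on $k$ is exactly what makes this linear system solvable in nonnegative integers. For the subcase $m > 0$, I observe $m = q_0(2q_0 - k - 2l - 2)$ is a positive multiple of $q_0$, so $m\mathrm{g}_0$ can be absorbed either by a single $\mathrm{h}_s$ (if $m/q_0$ is odd) or by combining an $\mathrm{h}_s$ with $\mathrm{g}_1,\mathrm{g}_2,\mathrm{g}_4$ terms (if $m/q_0$ is even), using the displayed identities; in either case the residual reduces to the $m = 0$ subcase with $k$ shifted. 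Case $j \ne 0$ is treated analogously, the key difference being that the extra constant $j$ is best matched by iterating the identity involving $\mathrm{f}_{r-1}$ (which carries a $+1$), combined with $\mathrm{h}_s$ to handle parity; this is precisely why the formula for $\varphi_1(i)$ involves $\lceil k/2\rceil$.

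The main obstacle is bookkeeping. In each subcase I must verify simultaneously that every coefficient of a generator is nonnegative, that the indices of the $\mathrm{f}$- and $\mathrm{h}$-generators used stay within $\{0,\dots,2q_0-2\}$ and $\{0,\dots,q_0-2\}$ respectively, and that the residual term after peeling off an $\mathrm{f}$ or $\mathrm{h}$ generator reduces cleanly to a system solvable by nonnegative integer combinations of $\mathrm{g}_1,\mathrm{g}_2,\mathrm{g}_4$. The restriction $i \le q(q-2)/2$, which bounds $l$ by $q_0^2 - 1$, is exactly what is needed to keep all these indices within range; the precise form of $\varphi_1(i)$ is engineered so that the required inequalities are tight but satisfied.
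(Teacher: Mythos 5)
Your plan founders at the exact point where the lemma is nontrivial, namely the subcases where the maximum is positive. First, the ``backbone reduction'' is false: after writing $lq = l(\mathrm{g}_4-\mathrm{g}_0)$ it does \emph{not} suffice to show $(\varphi_1(i)-l)\mathrm{g}_0 + kq_0 + j \in H$, because that smaller number need not lie in $H$. Concretely, take $s=2$ (so $q_0=4$, $q=32$, $\mathrm{g}_0=993$) and $j=0$, $k=1$, $l=1$, i.e.\ $i=36$. Then $\varphi_1(i)=14$ and $\varphi_1(i)\mathrm{g}_0+i=13938=2\mathrm{g}_1+3\mathrm{h}_0\in H$, but $(\varphi_1(i)-l)\mathrm{g}_0+kq_0+j=13\cdot 993+4=12913\notin H$: any representation uses at most $13$ generators (all generators are at least $993$ and $14\cdot 993>12913$), and a finite check of the possible large generators ($\mathrm{f}_0,\dots,\mathrm{f}_3,\mathrm{h}_0,\mathrm{h}_1$) together with the five small ones shows no nonnegative combination hits $12913$.

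Second, and independently of the reduction, your recipe for the subcase $m:=q-q_0(k+2l+2)>0$ does not work as described. Absorbing $m\mathrm{g}_0=(2s+1)q_0\mathrm{g}_0$ by a single $\mathrm{h}_s$ via $(2s+1)q_0\mathrm{g}_0=\mathrm{h}_s+s\mathrm{g}_4+q_0$ leaves the residual $(l+1)\mathrm{g}_0+lq+(k+1)q_0$, which falls into your $m=0$ system only when $k+2l+3\ge 2q_0$, i.e.\ only when $m=q_0$; for $m\ge 2q_0$ it does not ``reduce to the $m=0$ subcase with $k$ shifted'', and in the example above ($m=3q_0$) the residual equals $2026$, which is itself not in $H$. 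The missing idea is that the surplus must be peeled off $q_0\mathrm{g}_0$ at a time using $q_0\mathrm{g}_0=\mathrm{h}_0+q_0$, i.e.\ with $m/q_0$ copies of $\mathrm{h}_0$, each shifting $k$ by one; this lands exactly on the boundary of your linear system and gives the identity $\varphi_1(i)\mathrm{g}_0+i=(l+1)\mathrm{g}_1+(2q_0-k-2l-2)\mathrm{h}_0$, which is the paper's decomposition. Similarly, for $j\neq 0$ with positive maximum the paper uses a \emph{single} $\mathrm{f}$ of large index, $\mathrm{f}_{2q_0-\lceil k/2\rceil-j-l-2}$, plus multiples of $\mathrm{g}_1,\mathrm{g}_2,\mathrm{g}_3$, not an iteration of the $\mathrm{f}$-identity; your one-sentence sketch of that case cannot be verified and, given the failure mode just exhibited, cannot be accepted as is. What you do have right are the case $j=k=0$ and the max-zero cases: your linear system $a_1+a_2+a_4=l+1$, $2a_1+a_2=2q_0-k$ in $\mathrm{g}_1,\mathrm{g}_2,\mathrm{g}_4$ is solvable precisely under the max-zero hypothesis and is equivalent to the paper's explicit three-way split there.
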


\begin{proof}
  Clearly, if $j=0$ and $k=0$, then $\varphi_1(i)\mathrm{g}_0 + i = l\mathrm{g}_0 + lq = l\mathrm{g}_4$ belongs to $H$.

  We now consider the case of $j = 0, k \neq 0$ and $\max \{q - q_0(k + 2l + 2), 0 \} > 0$. Note that this last condition implies that $2q_0 - k - 2l - 2 > 0$. Then:
  \begin{align*}
    \varphi_1(i)\mathrm{g}_0+i & = (l+1 + q - q_0(k + 2l + 2))\mathrm{g}_0 + lq + kq_0 \\
                      & = (l+1)\mathrm{g}_1 + (l+1)(2q_0-q) + (q-q_0(k + 2l + 2))\mathrm{g}_0 + lq + kq_0 \\
                      & = (l+1)\mathrm{g}_1 + (2q_0 - k - 2l - 2)\mathrm{h}_0.
  \end{align*}
  Hence $\varphi_1(i)\mathrm{g}_0+i$ belongs to $H$.

  Let us consider now the case of $j = 0, k \neq 0$ and $\max \{q - q_0(k + 2l + 2), 0 \} = 0$. This last condition can be expressed equivalently as $k \geq 2q_0-2l-2$. Define $c := 2q_0-k$, so that $1 \leq c \leq 2l+2$. Then
  \begin{equation}
    \label{eq:j0}
    \varphi_1(i)\mathrm{g}_0+i = (l+1)\mathrm{g}_0 + lq + kq_0 = (l+1)\mathrm{g}_4 - cq_0.
  \end{equation}
  We can distinguish three cases.
  \begin{itemize}
  \item If $c \leq l+1$, then Equation \eqref{eq:j0} can be written as $ (l+1-c)\mathrm{g}_4 + c\mathrm{g}_2. $
  \item If $c > l+1$ and $c$ is even, then Equation \eqref{eq:j0} can be written as $ \left( l+1-\frac{c}{2} \right) \mathrm{g}_4 + \frac{c}{2} \mathrm{g}_1. $
  \item If $c > l+1$ and $c$ is odd (and in particular $c \leq 2l + 1$), then Equation \eqref{eq:j0} can be written as $ \left( l-\frac{c-1}{2} \right) \mathrm{g}_4 + \frac{c-1}{2} \mathrm{g}_1 + \mathrm{g}_2. $
  \end{itemize}
  In all the three cases $\varphi_1(i)\mathrm{g}_0+i$ belongs to $H$.

  Let us now assume $j \neq 0$ and $\max \{q - q_0(\lceil \frac{k}{2} \rceil + j + l + 1), 0 \} > 0$; this last condition implies $2q_0 - \lceil \frac{k}{2} \rceil - j - l - 1 > 0$. Also,
  \begin{align}
    \label{eq:jneq0}
    \varphi_1(i)\mathrm{g}_0+i & = \left( l+1 + q - q_0 \left( \left\lceil \frac{k}{2} \right\rceil + j + l + 1 \right) \right) \mathrm{g}_0 + lq + kq_0 + j \nonumber \\
                     % & = \left( 2q_0 - \left\lceil \frac{k}{2} \right\rceil - j - l - 1 \right) q_0 \mathrm{g}_0 + (l+2)\mathrm{g}_4 - q - \mathrm{g}_4 + kq_0 + j \nonumber \\
                      %& = \mathrm{f}_{2q_0 - \lceil \frac{k}{2} \rceil - j - l - 2} + \left( 2q_0 - \left\lceil \frac{k}{2} \right\rceil - j \right) \mathrm{g}_4 + 1 - q - \mathrm{g}_4 + kq_0 + j \nonumber \\
                      & = \mathrm{f}_{2q_0 - \lceil \frac{k}{2} \rceil - j - l - 2} + 2q_0q^2 + 2q_0 - \left\lceil \frac{k}{2} \right\rceil (q^2+1) - j q^2 - q - q^2 + kq_0.
  \end{align}
  If $k$ is even, then Equation \eqref{eq:jneq0} is equal to
  $$ \mathrm{f}_{2q_0 - \frac{k}{2} - j - l - 2} + \left( q_0 - 1 - \frac{k}{2} \right) \mathrm{g}_1 + (q_0 - 1 - j) \mathrm{g}_3 + \mathrm{g}_2. $$
  If $k$ is odd, then Equation \eqref{eq:jneq0} is equal to
  $$ \mathrm{f}_{2q_0 - \frac{k+1}{2} - j - l - 2} + \left( q_0 - \frac{k+1}{2} \right) \mathrm{g}_1 + (q_0 - 1 - j) \mathrm{g}_3. $$
  In both cases $\varphi_1(i)\mathrm{g}_0+i$ belongs to $H$.

  Let us finally consider the case of $j \neq 0$ and $\max \{q - q_0(\lceil \frac{k}{2} \rceil + j + l + 1), 0 \} = 0$; this last condition implies $k \geq 4q_0-2j-2l-3$. Define $c := 2q_0-1-k$, so that $0 \leq c \leq 2(l+1+j-q_0)$. Then
  \begin{align}
    \label{eq:jneq0max0}
    \varphi_1(i)\mathrm{g}_0+i & = (l+1)\mathrm{g}_0 + lq + kq_0 + j
                     % & = (l+1)q^2 - q + l + 1 + (2q_0-1-c)q_0 + j \nonumber \\
                      %& = (l+1+j-q_0)q^2 + (q_0-j)q^2 + l + 1 - q_0 - cq_0 + j \nonumber \\
                       = (l+1+j-q_0)\mathrm{g}_4 + (q_0-j)\mathrm{g}_3 - cq_0.
  \end{align}
  Let us distinguish three cases.
  \begin{itemize}
  \item If $c \leq l+1+j-q_0$, then Equation \eqref{eq:jneq0max0} can be written as $ (l+1+j-q_0-c)\mathrm{g}_4 + (q_0-j)\mathrm{g}_3 + c\mathrm{g}_2. $
  \item If $c > l+1+j-q_0$ and $c$ is even, then Equation \eqref{eq:jneq0max0} can be written as
    $$ \left( l+1+j-q_0-\frac{c}{2} \right) \mathrm{g}_4 + (q_0-j)\mathrm{g}_3 + \frac{c}{2} \mathrm{g}_1.$$
  \item If $c > l+1+j-q_0$ and $c$ is odd (and in particular $1 \leq c \leq 1+2(l+j-q_0)$), then Equation \eqref{eq:jneq0max0} can be written as
    $$ \left( l+j-q_0-\frac{c-1}{2} \right) \mathrm{g}_4 + (q_0-j)\mathrm{g}_3 + \frac{c-1}{2} \mathrm{g}_1 + \mathrm{g}_2. $$
  \end{itemize}
  In all the three cases $\varphi_1(i)\mathrm{g}_0+i$ belongs to $H$.
\end{proof}

\begin{lemma}
  \label{lem:phi2}
  Let $i \in \{ \frac{q(q-2)}{2} + 1, \dots, \mathrm{g}_0-1 \}$. Denote $i^\prime = \mathrm{g}_0-1-i$ and write $i^\prime = lq+kq_0+j$ with $j \in \{ 0,\dots,q_0-1 \}$, $k \in \{ 0,\dots,2q_0-1 \}$ and $l \geq 0$. Define:
  $$ \varphi_2(i) =
  \begin{cases}
    q-l-1 - \max \{q - q_0(k + 2l + 1), 0 \} & \quad \text{if } j = q_0-1, \\
    q-l-1 - \max \{q - q_0(\lceil \frac{k}{2} \rceil + j + l + 1), 0 \} & \quad \text{if } j \neq q_0-1.
  \end{cases}
  $$
  Then $\varphi_2(i)\mathrm{g}_0 + i$ belongs to $H$.
\end{lemma}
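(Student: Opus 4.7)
My plan is to mirror the case-by-case construction of Lemma \ref{lem:phi1}, but now for the upper half of the residue classes modulo $\mathrm{g}_0$. The first move is to rewrite $i = \mathrm{g}_0 - 1 - i'$ with the decomposition $i' = lq + kq_0 + j$ given in the statement, so that
\[
\varphi_2(i)\mathrm{g}_0 + i \;=\; (\varphi_2(i)+1)\mathrm{g}_0 \;-\; lq \;-\; kq_0 \;-\; j \;-\; 1.
\]
The formulas defining $\varphi_2$ are tailored so that this expression admits an explicit decomposition into a non-negative integer combination of the generators of $H$. I would then split into the two cases $j = q_0 - 1$ and $j \neq q_0 - 1$, and within each into the two sub-cases where the inner maximum in the formula for $\varphi_2$ is positive or zero.

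In the sub-cases where the maximum equals zero, the coefficient of $\mathrm{g}_0$ reduces to $q - l - 1$, and the resulting expression can be manipulated using the identities $q\mathrm{g}_0 = (q-1)\mathrm{g}_3 + q$ and $q_0 = \mathrm{g}_4 - \mathrm{g}_3$. This reduces the problem to rewriting a term of the form $(q-1-l)\mathrm{g}_3$ plus a low-order correction as a non-negative combination of $\mathrm{g}_1, \mathrm{g}_2, \mathrm{g}_3, \mathrm{g}_4$; several further sub-cases on the parity of $k$ (and on the size of $l + j$ versus $q_0$) will emerge in close parallel to the corresponding portion of Lemma \ref{lem:phi1}. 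In the sub-cases where the maximum is positive, the coefficient of $\mathrm{g}_0$ instead takes the form $q_0\cdot(\text{integer}) - l - 1$, which matches exactly the shape of the leading coefficient in the definitions of $\mathrm{f}_i$ and $\mathrm{h}_j$. Thus one can extract a single $\mathrm{f}_i$ (when $j\neq q_0-1$) or $\mathrm{h}_j$ (when $j = q_0-1$) from the expression and absorb the remaining terms into a non-negative combination of $\mathrm{g}_0, \ldots, \mathrm{g}_4$; the index pulled out will depend on $l, k, j$, and in the $\mathrm{f}$-case on the parity of $k$.

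The main obstacle will be bookkeeping: in each sub-case one must verify that every coefficient in the proposed decomposition is non-negative, and that the subscripts of the $\mathrm{f}_i$ and $\mathrm{h}_j$ actually used lie in the admissible ranges $0 \leq i \leq 2q_0-2$ and $0 \leq j \leq q_0-2$. For this the bounds on $(l,k,j)$ implied by $i \geq \tfrac{q(q-2)}{2} + 1$ — equivalently $i' \leq \tfrac{q^2}{2} - 1$, which forces $l \leq \tfrac{q}{2} - 1$ — are essential. I expect the most delicate verification to be the case $j \neq q_0 - 1$ with the maximum positive, where the index of $\mathrm{f}$ must be controlled tightly as $k$ runs through both parities; this is the direct analogue of the trickiest branch of Lemma \ref{lem:phi1}, and the reflection $i\leftrightarrow i'$ converts the bound checks there into the corresponding ones here.
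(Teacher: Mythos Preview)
Your overall strategy matches the paper's: split on whether the inner maximum vanishes, then on $j=q_0-1$ versus $j\neq q_0-1$, and in each branch exhibit an explicit non-negative combination of the generators. Two of your detailed expectations, however, diverge from what actually happens.

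First, the max $=0$ branch is simpler than you anticipate. The paper observes directly that
\[
(q-l-1)\mathrm{g}_0+i=(q-l-1)\mathrm{g}_4-kq_0-j=(q-l-k-j-1)\mathrm{g}_4+k\mathrm{g}_2+j\mathrm{g}_3,
\]
using only $\mathrm{g}_4-q_0=\mathrm{g}_2$ and $\mathrm{g}_4-1=\mathrm{g}_3$; no parity split on $k$ is needed, and the single coefficient check $q-l-k-j-1\ge 0$ follows from $l\le q/2-1$.

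Second, your plan for the case $j=q_0-1$ with positive maximum --- extract a \emph{single} $\mathrm{h}_m$ and absorb the rest into the $\mathrm{g}$'s --- does not work. In this branch the paper obtains $\varphi_2(i)\mathrm{g}_0+i=\mathrm{h}_l+k\mathrm{h}_0$, and already for $k\ge 1$ the remainder $k\mathrm{h}_0$ is \emph{not} a non-negative combination of $\mathrm{g}_0,\dots,\mathrm{g}_4$ (indeed $\mathrm{h}_0=q_0\mathrm{g}_0-q_0$, and $\mathrm{g}_0$ is the smallest generator, so no cancellation is available). You must allow several $\mathrm{h}$'s. There is a further subtlety you do not mention: the constraint $k+2l+1<2q_0$ permits $l=q_0-1$ (with $k=0$), in which case one needs $\mathrm{h}_{q_0-1}$, which is \emph{not} among the listed generators. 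The paper handles this by verifying separately that $\mathrm{h}_{q_0-1}=\mathrm{f}_{q_0-1}+\mathrm{f}_{q_0-2}+(q_0-2)\mathrm{g}_3\in H$. Your sketch should incorporate both adjustments; the $j\neq q_0-1$ branch with positive maximum, which you flagged as the delicate one, is by contrast exactly as you describe (one $\mathrm{f}_{\lceil k/2\rceil+j+l}$ plus a parity split into $\mathrm{g}_1,\mathrm{g}_2,\mathrm{g}_3$).
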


\begin{proof}
  We first prove that if $\varphi_2(i) = q-l-1$, then $\varphi_2(i)g\mathrm{g}_0+i$ belongs to $H$.
  \begin{align*}
    \varphi_2(i)\mathrm{g}_0+i & = (q-l-1)\mathrm{g}_0 + \mathrm{g}_0 - 1 - i^\prime \\
                      & = (q-l-1)\mathrm{g}_4 + lq - (lq+kq_0+j) \\
                    %  & = (q-l-k-j-1)\mathrm{g}_4 + k\mathrm{g}_4 + j\mathrm{g}_4 - kq_0 - j \\
                      & = (q-l-k-j-1)\mathrm{g}_4 + k\mathrm{g}_2 + j\mathrm{g}_3.
  \end{align*}
  By assumption on $i$, the condition $l \leq \frac{q}{2}-1$ holds, so
  \begin{align*}
    q-l-k-j-1 & \geq q - \left( \frac{q}{2}-1 \right) - (2q_0-1) - (q_0-1) - 1 \\
              & = \frac{q}{2} - 3q_0 + 2
               = (q_0-1)(q_0-2) \geq 0.
  \end{align*}
  Therefore $\varphi_2(i)\mathrm{g}_0+i$ is in $H$.

  We now consider the case of $j = q_0-1$ and $\max \{q - q_0(k + 2l + 1), 0 \} > 0$. Note that this last condition implies $l < q_0$. Define $\mathrm{h}_{q_0-1} := (2q_0-1)q_0\mathrm{g}_0 - (q_0-1)\mathrm{g}_4 - q_0$, coherently with the definition of $\mathrm{h}_j$ for $j=0,\dots,q_0-2$, and observe that $\mathrm{h}_{q_0-1} = \mathrm{f}_{q_0-1} + \mathrm{f}_{q_0-2} + (q_0-2)\mathrm{g}_3 \in H$. Then:
  \begin{align*}
    \varphi_2(i)\mathrm{g}_0+i & = (q_0(k + 2l + 1) - l - 1)\mathrm{g}_0 + \mathrm{g}_0 - 1 - i^\prime \\
                    & = (q_0(k + 2l + 1) - l)\mathrm{g}_0 - 1 - (lq+kq_0+j) \\
                  %  & = (2l+1)q_0\mathrm{g}_0 + kq_0\mathrm{g}_0 - l\mathrm{g}_0 - lq - kq_0 - q_0 \\
                  %  & = \mathrm{h}_l + l(q^2+1) + kq_0\mathrm{g}_0 - l\mathrm{g}_0 - lq - kq_0 \\
                    & = \mathrm{h}_l + k\mathrm{h}_0.
  \end{align*}
  Hence $\varphi_2(i)\mathrm{g}_0+i$ belongs to $H$.

  We finally prove that if $j \neq q_0-1$ and $\max \{q - q_0(\lceil \frac{k}{2} \rceil + j + l + 1), 0 \} > 0$, then $\varphi_2(i)\mathrm{g}_0+i$ belongs to $H$. Note that the condition on the maximum implies $\lceil \frac{k}{2} \rceil + j + l < 2q_0 - 1$. Then:
  \begin{align}
    \label{eq:jneqq0-1}
    \varphi_2(i)\mathrm{g}_0+i & = \left( q_0 \left( \left\lceil \frac{k}{2} \right\rceil + j + l + 1 \right) - l - 1 \right) \mathrm{g}_0 + \mathrm{g}_0-1-i^\prime \nonumber \\
                    & = \left( q_0 \left( \left\lceil \frac{k}{2} \right\rceil + j + l + 1 \right) - l \right) \mathrm{g}_0 - 1 - (lq+kq_0+j) \nonumber \\
                %    & = \left( \left\lceil \frac{k}{2} \right\rceil + j + l + 1 \right) q_0\mathrm{g}_0 - l\mathrm{g}_0  - 1 - lq - kq_0 - j \nonumber \\
                  %  & = \mathrm{f}_{\lceil \frac{k}{2} \rceil + j + l} + \left( \left\lceil \frac{k}{2} \right\rceil + j + l \right) (q^2+1) - l\mathrm{g}_0 - lq - kq_0 - j \nonumber \\
                    & = \mathrm{f}_{\lceil \frac{k}{2} \rceil + j + l} + \left( \left\lceil \frac{k}{2} \right\rceil + j \right) q^2 - kq_0 + \left\lceil \frac{k}{2} \right\rceil.
  \end{align}
  If $k$ is even, then Equation \eqref{eq:jneqq0-1} is equal to $\mathrm{f}_{\frac{k}{2} + j + l} + \frac{k}{2}\mathrm{g}_1 + j\mathrm{g}_3. $
If $k$ is odd (and in particular $k \geq 1$), then Equation \eqref{eq:jneqq0-1} is equal to
    $ \mathrm{f}_{\frac{k+1}{2} + j + l} + \frac{k-1}{2}\mathrm{g}_1 + j\mathrm{g}_3 + \mathrm{g}_2. $
In both cases $\varphi_2(i)\mathrm{g}_0+i$ belongs to $H$.
\end{proof}

Define the map $\varphi : \{ 0, \dots, \mathrm{g}_0-1 \} \to \mathbb{N}$ by
$$ \varphi(i) =
\begin{cases}
  \varphi_1(i) & \quad \text{if } i \in \{ 0, \dots, \frac{q(q-2)}{2} \}, \\
  \varphi_2(i) & \quad \text{if } i \in \{ \frac{q(q-2)}{2} + 1, \dots, \mathrm{g}_0-1 \}.
\end{cases}
$$
Combining Lemma \ref{lem:phi1} and Lemma \ref{lem:phi2}, it follows that $\varphi(i)\mathrm{g}_0 + i$ belongs to $H$ for all $i \in \{0, \dots, \mathrm{g}_0-1 \}$.

\begin{lemma}
  \label{lem:sumeqg}
The Ap\'ery set of $H(P)$ is $\mathrm{Ap}(H(P)) = \{ \varphi(i)\mathrm{g}_0 + i \mid i=0, \dots, \mathrm{g}_0-1 \}$.
\end{lemma}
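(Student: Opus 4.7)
The strategy is to squeeze the Apéry elements from above by $\varphi$ and from below by the Apéry-genus identity, then verify tightness by a single explicit sum. Concretely, by Lemmas \ref{lem:phi1} and \ref{lem:phi2}, each of the $\mathrm{g}_0$ integers $\varphi(i)\mathrm{g}_0+i$ belongs to $H \subseteq H(P)$, and together they form a complete system of representatives modulo $\mathrm{g}_0$. Let $a_i$ denote the smallest element of $H(P)$ congruent to $i$ modulo $\mathrm{g}_0$; then $a_i \le \varphi(i)\mathrm{g}_0 + i$, so $\lfloor a_i/\mathrm{g}_0\rfloor \le \varphi(i)$. Inside each residue class $i$, the gaps of $H(P)$ are precisely $i, i+\mathrm{g}_0, \dots, a_i-\mathrm{g}_0$, of which there are $\lfloor a_i/\mathrm{g}_0\rfloor$. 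Summing over $i$ yields
$$
g(\SK) \;=\; g(H(P)) \;=\; \sum_{i=0}^{\mathrm{g}_0-1}\left\lfloor \frac{a_i}{\mathrm{g}_0}\right\rfloor \;\le\; \sum_{i=0}^{\mathrm{g}_0-1}\varphi(i).
$$

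The central technical step is to verify that $\sum_{i=0}^{\mathrm{g}_0-1}\varphi(i) = q(q-1)^2/2 = g(\SK)$. Once this identity is established, both inequalities above collapse to equalities in every term, forcing $a_i = \varphi(i)\mathrm{g}_0 + i$ for all $i$. I would compute this sum by splitting into the two ranges $0\le i\le q(q-2)/2$ (where $\varphi=\varphi_1$) and $q(q-2)/2<i\le \mathrm{g}_0-1$ (where $\varphi=\varphi_2$), and within each range parametrising via the decomposition $i = lq+kq_0+j$ (respectively $\mathrm{g}_0-1-i = lq+kq_0+j$). Each subcase of the piecewise definition of $\varphi_1$ and $\varphi_2$ reduces to an explicit triple sum over $(l,k,j)$ in a rectangular range, with the boundary condition $\lceil k/2\rceil + j + l + 1 \lessgtr q_0$ (or the corresponding condition for the branches where $j=0$ or $j=q_0-1$) deciding whether the $\max$ term is active. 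Each individual triple sum evaluates via standard arithmetic and geometric progressions to a polynomial in $q_0$, and the aggregate equals $q(q-1)^2/2$.

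The main obstacle is the bookkeeping in this combinatorial computation: there are several subcases (depending on the parity of $k$ and on whether $j$ or $k$ hit distinguished values), and the transitions between them must be handled consistently to avoid double counting. A sanity check on small values of $q_0$ (for instance $q_0 = 2$ or $q_0 = 4$) would provide useful verification. Once the sum identity is confirmed, a short inspection of the formulas for $\varphi_1$ and $\varphi_2$ shows that $\varphi(i) \ge 1$ whenever $i \ne 0$: for $\varphi_1$ this is immediate from the three branches since $(l,k,j) \ne (0,0,0)$, and for $\varphi_2$ one uses that the index $l$ in the decomposition of $\mathrm{g}_0-1-i$ satisfies $l<q/2$, so $q-l-1 \ge 1$ in the branch where the $\max$ vanishes, while in the other branches the leading term $q_0(\lceil k/2\rceil+j+l+1)-l-1$ is clearly at least $1$. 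This gives $a_i \ge \mathrm{g}_0 + i > \mathrm{g}_0$ for $i \neq 0$, so $\mathrm{g}_0$ is the multiplicity of $H(P)$ and $\{a_i : 0 \le i < \mathrm{g}_0\}$ is therefore the Apéry set $\mathrm{Ap}(H(P))$, as claimed.
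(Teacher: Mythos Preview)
Your overall strategy coincides with the paper's: both reduce the lemma to the identity $\sum_{i=0}^{\mathrm{g}_0-1}\varphi(i)=g(\SK)$, and both deduce from this that the candidate set is exactly the Ap\'ery set. The genuine difference lies in how that sum is evaluated. You propose a direct case-by-case triple sum over $(l,k,j)$ across all the branches of $\varphi_1$ and $\varphi_2$; this is correct in principle but, as you note yourself, the bookkeeping is heavy. The paper instead exploits a symmetry: it shows that $\varphi(i)+\varphi((q-1)^2-i)=q-1$ for all $i\in\{0,\dots,(q-1)^2\}$, by matching up the three branches of $\varphi_1$ with the corresponding branches of $\varphi_2$ under the substitution $i\mapsto (q-1)^2-i$. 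This collapses the bulk of the sum to $\bigl(\tfrac{q(q-2)}{2}+1\bigr)(q-1)$ immediately, leaving only a short tail over $i\in\{(q-1)^2+1,\dots,\mathrm{g}_0-1\}$ (where $i'=\mathrm{g}_0-1-i\le q-2$, so $l=0$ and the $\max$ terms are never zero) to be summed directly. Your brute-force route would work but the symmetry argument is considerably cleaner and avoids the parity and boundary subcases you anticipate.

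Your treatment of the multiplicity point (showing $\varphi(i)\ge 1$ for $i\ne 0$ so that $\mathrm{g}_0$ is indeed the multiplicity of $H(P)$) is a nice self-contained touch; the paper instead first concludes $H=H(P)$ from the genus count and inherits the multiplicity from the earlier observation that $\mathrm{g}_0$ is the smallest generator of $H$.
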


\begin{proof}
It is sufficient to show that $\sum_{i=0}^{\mathrm{g}_0-1} \varphi(i) = g(\SK)$. Indeed, the number of gaps of $H$ is then shown to be at most $g(\SK)$, which implies that $H=H(P),$ since we already know that $H \subseteq H(P).$ But in that case, the mentioned set is the Ap\'ery set of $H(P)$, since otherwise $g(H(P))$ would be strictly less than $\sum_{i=0}^{\mathrm{g}_0-1} \varphi(i)$.

First, let us prove that
  \begin{equation}
    \label{eq:phi1+phi2}
    \varphi(i) + \varphi((q-1)^2-i) = q-1 \quad \text{for } i \in \{ 0,\dots,(q-1)^2 \}.
  \end{equation}
  Observe that $i$ is in range $0,\dots,\frac{q(q-2)}{2}$ if and only if $(q-1)^2-i$ is in range $\frac{q(q-2)}{2}+1,\dots,(q-1)^2$. Therefore it is enough to prove Equation \eqref{eq:phi1+phi2} for $i \in \{ 0,\dots,\frac{q(q-2)}{2} \}$ only; the case $i \in \{ \frac{q(q-2)}{2}+1,\dots,(q-1)^2 \}$ follows by symmetry.

  For $i \in \{ 0,\dots,\frac{q(q-2)}{2} \}$ we have $\varphi(i) = \varphi_1(i)$ where $\varphi_1$ is defined as in Lemma \ref{lem:phi1}  and $\varphi((q-1)^2-i) = \varphi_2((q-1)^2-i)$ where $\varphi_2$ is defined as in Lemma \ref{lem:phi2}. Write $i = lq+kq_0+j$ with $j \in \{ 0,\dots,q_0-1 \}$, $k \in \{ 0,\dots,2q_0-1 \}$ and $l \geq 0$. Then:
  $$ \mathrm{g}_0-1-((q-1)^2-i) = q-1+i = l^\prime q + k^\prime q_0 + j^\prime $$
  with
  $$ \begin{cases}
    l^\prime = l, k^\prime = 2q_0-1, j^\prime = q_0-1 & \quad \text{if } j=0 \text{ and } k=0, \\
    l^\prime = l+1, k^\prime = k-1, j^\prime = q_0-1 & \quad \text{if } j=0 \text{ and } k \neq 0, \\
    l^\prime = l+1, k^\prime = k, j^\prime = j-1 & \quad \text{if } j \neq 0,
  \end{cases} $$
  satisfying $j^\prime \in \{ 0,\dots,q_0-1 \}$, $k^\prime \in \{ 0,\dots,2q_0-1 \}$ and $l^\prime \geq 0$. Consequently:
  \begin{align*}
    & q-1-\varphi((q-1)^2-i) = q-1-\varphi_2((q-1)^2-i) \\
    & = \begin{cases}
      l^\prime + \max \{q - q_0(k^\prime + 2l^\prime + 1), 0 \} & \quad \text{if } j^\prime = q_0-1, \\
      l^\prime + \max \{q - q_0(\lceil \frac{k^\prime}{2} \rceil + j^\prime + l^\prime + 1), 0 \} & \quad \text{if } j^\prime \neq q_0-1.
    \end{cases} \\
    & = \begin{cases}
      l + \max \{ -2q_0l, 0 \} & \quad \text{if } j=0 \text{ and } k=0, \\
      l+1 + \max \{ q - q_0(k+2l+2), 0 \} & \quad \text{if } j=0 \text{ and } k \neq 0, \\
      l+1 + \max \{ q - q_0( \left\lceil \frac{k}{2} \right\rceil + j + l + 1), 0 \} & \quad \text{if } j \neq 0.
    \end{cases} \\
    & = \varphi_1(i) = \varphi(i).
  \end{align*}
  Equation \eqref{eq:phi1+phi2} implies that
  \begin{equation}
    \label{eq:partialsum1}
    \sum_{i=0}^{(q-1)^2} \varphi(i) = \sum_{i=0}^{\frac{q(q-2)}{2}} (q-1) = \left( \frac{q(q-2)}{2} + 1 \right) (q-1).
  \end{equation}
  Now let us assume $i \in \{ (q-1)^2+1, \dots, \mathrm{g}_0-1 \}$. Since $i^\prime = \mathrm{g}_0-1-i$ ranges between $0$ and $q-2$, we can express $i^\prime$ as $i^\prime = kq_0+j$ with $j \in \{ 0,\dots,q_0-1 \}$ and $k \in \{ 0,\dots,2q_0-1 \}$. In particular:
  $$ \varphi(i) = \varphi_2(i) =
  \begin{cases}
    q_0(k+1) - 1 & \quad \text{if } j = q_0-1, \\
    q_0( \left\lceil \frac{k}{2} \right\rceil + j + 1) - 1 & \quad \text{if } j \neq q_0-1.
  \end{cases} $$
  Then:
  \begin{align}
    \sum_{i=(q-1)^2+1}^{\mathrm{g}_0-1} \varphi(i) & = \sum_{k=0}^{2q_0-2} (q_0(k+1)-1) + \sum_{k=0}^{2q_0-1} \sum_{j=0}^{q_0-2} \left( q_0 \left( \left\lceil \frac{k}{2} \right\rceil + j+1 \right) -1 \right)
                                           = \frac{q^2}{2} - \frac{3}{2}q + 1. \label{eq:partialsum2}
  \end{align}
  Combining Equations \eqref{eq:partialsum1} and \eqref{eq:partialsum2} we obtain:
  $$ \sum_{i=0}^{\mathrm{g}_0-1} \varphi(i) = \left( \frac{q(q-2)}{2} + 1 \right) (q-1) + \left( \frac{q^2}{2} - \frac{3}{2}q + 1 \right) = \frac{1}{2}q(q-1)^2 = g(\SK). $$
\end{proof}
Note that from the proof of the above lemma, we immediately conclude that $H=H(P)$, proving Theorem \ref{teo:HPgen}.

\begin{corollary}
Let $P \in \SK(\mathbb{F}_{q^4})\setminus \SK(\mathbb{F}_{q})$. Then the Weierstrass semigroup $H(P)$ is symmetric.
\end{corollary}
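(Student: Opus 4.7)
The plan is to read the conductor of $H(P)$ directly off the explicit Ap\'ery set provided by Lemma \ref{lem:sumeqg}. Since $H(P)$ has multiplicity $\mathrm{g}_0$ and $\mathrm{Ap}(H(P)) = \{\varphi(i)\mathrm{g}_0 + i : 0 \leq i \leq \mathrm{g}_0 - 1\}$, its conductor equals $c_{H(P)} = 1 + M - \mathrm{g}_0$, where $M := \max_{0 \leq i \leq \mathrm{g}_0-1}(\varphi(i)\mathrm{g}_0 + i)$. Symmetry of $H(P)$ is equivalent to $c_{H(P)} = 2g(\SK) = q(q-1)^2$, so the task reduces to the single equality $M = q^2(q-1)$.

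The natural candidate is $i^{\ast} := (q-1)^2$. Applying the identity $\varphi(i) + \varphi((q-1)^2 - i) = q-1$ established in the proof of Lemma \ref{lem:sumeqg} at $i = 0$, together with $\varphi(0) = \varphi_1(0) = 0$, gives $\varphi((q-1)^2) = q-1$, hence $\varphi(i^{\ast})\mathrm{g}_0 + i^{\ast} = (q-1)\mathrm{g}_0 + (q-1)^2 = q^2(q-1)$. To dispose of smaller $i$, the same identity rewrites $\varphi(i)\mathrm{g}_0 + i = q^2(q-1) - \varphi((q-1)^2-i)\mathrm{g}_0 - ((q-1)^2-i)$ for $0 \leq i \leq (q-1)^2$, which is bounded by $q^2(q-1)$ because both subtracted terms are non-negative.

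For the remaining range $(q-1)^2 < i \leq \mathrm{g}_0 - 1$, I would use the explicit formulas for $\varphi_2$ in Lemma \ref{lem:phi2}. The index $i' := \mathrm{g}_0 - 1 - i$ now lies in $\{0, \ldots, q-2\}$, which forces $l = 0$ in the decomposition $i' = lq + kq_0 + j$ and collapses $\varphi_2(i)$ to $q_0(k+1) - 1$ or $q_0(\lceil k/2\rceil + j + 1) - 1$ according to whether $j = q_0 - 1$. A straightforward case analysis based on the bounds $0 \leq k \leq 2q_0 - 1$, $0 \leq j \leq q_0 - 1$, and $i' \leq q - 2$ produces the uniform bound $\varphi(i) \leq q - q_0 - 1$, whence $\varphi(i)\mathrm{g}_0 + i \leq (q-q_0)\mathrm{g}_0 - 1 < q^2(q-1)$ after expanding via $q = 2q_0^2$. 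This yields $M = q^2(q-1)$, hence $c_{H(P)} = q(q-1)^2 = 2g(\SK)$, so $H(P)$ is symmetric. The only real obstacle is the bookkeeping needed to pin down the extremal $(k,j)$ in each sub-case of $\varphi_2$ and to verify that neither sub-case can push $\varphi(i)$ beyond $q - q_0 - 1$; once this is in hand the remaining arithmetic is routine.
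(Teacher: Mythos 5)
Your proof is correct, but it takes a different (and longer) route in its second half than the paper does. Both arguments hinge on the same key computation, namely that $i^\ast=(q-1)^2$ contributes the Ap\'ery element $\varphi(i^\ast)\mathrm{g}_0+i^\ast=q^2(q-1)$ (your derivation of $\varphi((q-1)^2)=q-1$ from $\varphi(0)=0$ and the symmetry identity $\varphi(i)+\varphi((q-1)^2-i)=q-1$ is a clean way to see this). The divergence is in how one concludes. You compute the conductor via $c_{H(P)}=1+\max\mathrm{Ap}(H(P))-\mathrm{g}_0$, which forces you to prove that $q^2(q-1)$ is actually the \emph{maximum} of the Ap\'ery set; this requires the bound $\varphi(i)\mathrm{g}_0+i\le q^2(q-1)$ on both ranges of $i$, and in particular the case analysis of $\varphi_2$ for $(q-1)^2<i\le\mathrm{g}_0-1$ (your bound $\varphi(i)\le q-q_0-1$ there is correct, using that $i'=\mathrm{g}_0-1-i\le q-2$ excludes the extremal pair $k=2q_0-1$, $j=q_0-1$). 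The paper avoids all of this: since $q^2(q-1)$ lies in $\mathrm{Ap}(H(P))$, by definition $q^2(q-1)-\mathrm{g}_0=2g(\SK)-1$ is a gap, and the Weierstrass gap theorem already guarantees that no gap exceeds $2g(\SK)-1$; hence the conductor is $2g(\SK)$ with no maximality argument needed. Your approach is purely combinatorial on the Ap\'ery set and would work for an abstract numerical semigroup with this Ap\'ery set, whereas the paper's is shorter because it borrows the geometric a priori bound on gaps. One small point you gloss over: in bounding the range $0\le i\le(q-1)^2$ you use $\varphi((q-1)^2-i)\ge 0$, which for indices falling under $\varphi_2$ is true but deserves a word (it follows from the case analysis in Lemma \ref{lem:phi2}, e.g.\ $q-l-k-j-1\ge(q_0-1)(q_0-2)\ge0$ in the first case).
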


\begin{proof}
  For $i = (q-1)^2$ we have $a := \varphi(i)\mathrm{g}_0 + i = q^2(q-1) = 2g(\SK)-1+\mathrm{g}_0$. Since $a$ is an element of $\mathrm{Ap}(H(P))$, it follows that $a - \mathrm{g}_0 = 2g(\SK)-1$ is a gap of $H$. By the Weierstrass gap theorem $2g(\SK)-1$ is the largest possible gap of $H(P)$, hence $H(P)$ has conductor $2g(\SK)$.
\end{proof}

As for $P \in \SK(\mathbb{F}_q)$, we can now conclude from \cite[Proposition 50]{Konto} that any $P \in \SK(\mathbb{F}_{q^4})\setminus \SK(\mathbb{F}_{q})$ is a Weierstrass point of $\tilde{\mathcal{S}}$. In the next section, we will show that any $P \not \in \SK(\mathbb{F}_{q^4})$ has the same Weierstrass semigroup. This will imply that the set of Weierstrass points of $\SK$ is equal to $\SK(\mathbb{F}_{q^4}).$

\section{The Weierstrass semigroup for $P \not \in \SK(\mathbb{F}_{q^4})$}\label{sec:five}

Let now $P \in \SK(\overline{\mathbb{F}}_{q}) \setminus \SK(\mathbb{F}_{q^4})$. Let $a,b,c \in \overline{\mathbb{F}}_{q}$ be the affine coordinates of $P$ and as before denote with $Q$, the point of $\mathcal S$ with affine coordinates $a$ and $b$. %, the point of $\mathcal{S}$ such that $P|Q$ in $\SK | \mathcal{S}$.
It will also be convenient to use the expression $A:=a^q+a$.

From Corollary \ref{basisdiff}, the gap sequence $G(P)$ at $P$ can be computed by constructing $g=g(\SK)$ functions $f_1,\ldots,f_g$ having pairwise distinct valuations at $P$ and such that $f_i \in L((2g(\SK)-2)P_\infty)$ for all $i=1,\ldots,g$.

To this aim note first that $\tilde{x}_Q:=x-a$ is a local parameter at $P$ and $\tilde{x}_Q,\tilde{y}_Q,\tilde{z}_Q,\tilde{w}_Q \in L((q^3-2q^2+q-2)P_\infty)$ by Equations \eqref{xtilda}-\eqref{wtilda}. %We define also
%
%\begin{equation} \label{ytilda}
%\tilde{y}_Q:=y+b+a^{q_0}(x+a),
%\end{equation}
%
%\begin{equation} \label{ztilda}
%\tilde{z}_P:=a^{2q_0}x+z+b^{2q_0},
%\end{equation}
%and
%\begin{equation} \label{wtilda}
%\tilde{w}_Q:=a^q \tilde{z}_P+b^{2q_0}x+w+b^2+a^{2q_0+2}.
%\end{equation}
Also, recall from Equation \eqref{eq:div_pi}, that there exists a function $\pi_P \in \overline{\mathbb{F}}_{q}(\SK)$ such that $(\pi_P)_{\SK} = q^2P+\Phi^4(P)-(q^2+1)P_\infty$.

In the following, the local power series expansions of $\tilde{y}_Q,\tilde{z}_Q$ and $\tilde{w}_Q$ at $P$ (with respect to the local parameter $\tilde{x}_Q$) is computed. First of all note that from $Q \in \mathcal{S}$ and $y^q+y=x^{q_0}(x^q+x)$ we have
$$(y+b)^q+(y+b)=a^{q_0}(a^q+a)+x^{q_0}(x^q+x)=a^{q_0}\tilde{x}_Q+(a^q+a)\tilde{x}_Q^{q_0}+\tilde{x}_Q^{q_0+1}+a^{q_0}\tilde{x}_Q^q+\tilde{x}_Q^{q+q_0},$$
so that
\begin{equation} \label{powerseriesyb}
(y+b)=a^{q_0} \tilde{x}_Q +A\tilde{x}_Q^{q_0}+\tilde{x}_Q^{q_0+1}+A^{q_0}\tilde{x}_Q^q+\tilde{x}_Q^{q+q_0}+A^q\tilde{x}_Q^{q_0 q}+\tilde{x}_Q^{qq_0+q}+A^{q_0q}+\tilde{x}_Q^{q^2}+O(\tilde{x}_Q^{q^2+1}).
\end{equation}
Hence, from $\tilde{y}_Q=(y+b)+a^{q_0}\tilde{x}_Q$ and Equation \eqref{powerseriesyb} we get
\begin{equation} \label{powerseriesytilda}
\tilde{y}_Q=A\tilde{x}_Q^{q_0}+\tilde{x}_Q^{q_0+1}+A^{q_0}\tilde{x}_Q^q+\tilde{x}_Q^{q+q_0}+A^q\tilde{x}_Q^{q_0 q}+\tilde{x}_Q^{qq_0+q}+A^{q_0q}+\tilde{x}_Q^{q^2}+O(\tilde{x}_Q^{q^2+1}).
\end{equation}
Since $A \ne 0$ from $P \not\in \SK(\mathbb{F}_q)$, while $\tilde{x}_Q$, seen as a function on $\mathcal{S}$, is also a local parameter at $Q$, we get that $v_P(\tilde{y}_Q)=v_Q(\tilde{y}_Q)=q_0$ as anticipated in Equation \eqref{ytilda}.

Now, $\tilde{z}_Q=a^{2q_0}(\tilde{x}_Q+a)+(\tilde{x}_Q+a)^{2q_0+1}+(y+b)^{2q_0}=a\tilde{x}_Q^{2q_0}+\tilde{x}_Q^{2q_0+1}+(y+b)^{2q_0}$, which combined with Equation \eqref{powerseriesyb} gives
\begin{equation} \label{powerseriesztilda}
\tilde{z}_Q=A\tilde{x}_Q^{2q_0}+\tilde{x}_Q^{2q_0+1}+A^{2q_0}\tilde{x}_Q^{q}+\tilde{x}_Q^{q+2q_0}+A^{q}\tilde{x}_Q^{2q_0q}+\tilde{x}_Q^{2q_0q+q}+A^{2q_0q}\tilde{x}_Q^{q^2}+O(\tilde{x}_Q^{q^2+1}).
\end{equation}
The computation above yields that $v_P(\tilde{z}_Q)=v_Q(\tilde{z}_Q)=2q_0$, which is again consistent with Equation \eqref{ztilda}.
Finally, from $z=\tilde{z}_Q+a^{2q_0}x+b^{2q_0}$, $b^q+b=a^{q_0}(a^q+a)$, Equations \eqref{powerseriesyb} and \eqref{powerseriesztilda}, we have
\begin{eqnarray*}
\tilde{w}_Q&=& a^q \tilde{z}_Q+b^{2q_0}(\tilde{x}_Q+a)+(\tilde{z}_Q+a^{2q_0}x+b^{2q_0})^{2q_0}+xy^{2q_0}+b^2+a^{2q_0+2}\\
%&=& a^q \tilde{z}_Q+b^{2q_0}(\tilde{x}_Q+a)+\tilde{z}_Q^{2q_0}+a^{2q}(\tilde{x}_Q+a)^{2q_0}+b^{2q}+xy^{2q_0}+b^2+a^{2q_0+2}\\
&=&  a^q \tilde{z}_Q+\tilde{z}_Q^{2q_0}+a^{2q} \tilde{x}_Q^{2q_0}+\tilde{x}_Q(y+b)^{2q_0}+a(y+b)^{2q_0}\\
&=& a^q(A\tilde{x}_Q^{2q_0}+\tilde{x}_Q^{2q_0+1}+A^{2q_0}\tilde{x}_Q^{q}+\tilde{x}_Q^{q+2q_0}+A^{q}\tilde{x}_Q^{2q_0q}+\tilde{x}_Q^{2q_0q+q}+A^{2q_0q}\tilde{x}_Q^{q^2})\\
&& +(A^{2q_0}\tilde{x}_Q^{2q}+\tilde{x}_Q^{2q+2q_0}+A^{2q}\tilde{x}_Q^{2q_0q}+\tilde{x}_Q^{2q_0q+2q})\\
&&+a^{2q} \tilde{x}_Q^{2q_0}+\tilde{x}_Q( a^{q} \tilde{x}_Q^{2q_0} +A^{2q_0}\tilde{x}_Q^{q}+\tilde{x}_Q^{q+2q_0}+A^{q}\tilde{x}_Q^{2q_0q}+\tilde{x}_Q^{2q_0q+q})\\
&&+a( a^{q} \tilde{x}_Q^{2q_0} +A^{2q_0}\tilde{x}_Q^{q}+\tilde{x}_Q^{q+2q_0}+A^{q}\tilde{x}_Q^{2q_0q}+\tilde{x}_Q^{2q_0q+q}+A^{2q_0q}\tilde{x}_Q^{q^2})+O(\tilde{x}_Q^{q^2+1}),
\end{eqnarray*}
which yields,
\begin{eqnarray}
 \label{powerserieswtilda}
\tilde{w}_Q&=& A^{2q_0+1}\tilde{x}_Q^q+A^{2q_0}\tilde{x}_Q^{q+1}+A\tilde{x}_Q^{q+2q_0}+\tilde{x}_Q^{q+2q_0+1}+A^{2q_0}\tilde{x}_Q^{2q}+\tilde{x}_Q^{2q+2q_0}+A^q(A^q+A)\tilde{x}_Q^{2q_0q} \nonumber\\
&&+A^q\tilde{x}_Q^{2q_0q+1}+A\tilde{x}_Q^{2q_0q+q}+\tilde{x}_Q^{2q_0q+q+1}+\tilde{x}_Q^{2q_0q+2q}+A^{2q_0q+1}\tilde{x}_Q^{q^2}+O(\tilde{x}_Q^{q^2+1}).
\end{eqnarray}
Hence $v_P(\tilde{w}_Q)=v_Q(\tilde{w}_Q)=q$ as in Equation \eqref{wtilda}.

%Similar computations, using the local parameters $\tilde{x}_{\Phi(Q)}=x+a^q$ at $\Phi(P)$ and $\tilde{x}_{\Phi^2(Q)}=x+a^{q^2}$ at $\Phi^2(P)$ respectively, show that $v_{\Phi(P)}(\tilde{w}_Q)=v_{\Phi(Q)}(\tilde{w}_Q)=2q_0$ and $v_{\Phi^2(P)}(\tilde{w}_Q)=v_{\Phi^2(Q)}(\tilde{w}_Q)=1$, so that
%
%\begin{equation} \label{divwtilda}
%(\tilde{w}_Q)_{\mathcal{S}}=qQ+2q_0\Phi(Q)+\Phi^2(Q)-(q+2q_0+1)Q_\infty.
%\end{equation}

\begin{rem} \label{remFrob}
Fixing $i \geq 1$ and replacing $P$ with $\Phi^i (P)$, one can obtain the power series expansions of $\tilde{x}_{\Phi^i(Q)}$, $\tilde{y}_{\Phi^i(Q)}$, $\tilde{z}_{\Phi^i(Q)}$ and $\tilde{w}_{\Phi^i(Q)}$. Clearly, they will be given as in Equations \eqref{powerseriesytilda}-\eqref{powerserieswtilda} simply replacing $\tilde{x}_Q$ with $\tilde{x}_{\Phi^i(Q)}$ and $A$ with $A^{q^i}$.
%In particular Equation \eqref{divwtilda} reads
%\begin{equation}
%\label{fundamentalFrob}
%(\tilde{w}_{\Phi^i(Q)})_{\mathcal{S}}=q\Phi^i(Q)+2q_0\Phi^{i+1}(Q)+\Phi^{i+2}(Q)-(q+2q_0+1)Q_\infty.
%\end{equation}
%Furthermore it is easy to see that $v_{\Phi(P)}(\tilde{y}_{Q})=v_{\Phi(P)}(\tilde{z}_{Q})=1$ and $v_{\Phi^{i+1}(P)}(\tilde{y}_{\Phi^i(Q)})=v_{\Phi^{i+1}(P)}(\tilde{z}_{\Phi^i(Q)})=1$ for all $i \geq 1$.
\end{rem}

The following lemmas show that the computations carried out in Equations \eqref{powerseriesytilda}-\eqref{powerserieswtilda} for $P$ (and for $\Phi^i(P)$ with $i \geq 1$ as in Remark \ref{remFrob}), as well as the divisor in Equation \eqref{wtilda} allow us to construct other functions in $L((2g(\SK)-2)P_\infty)$ that we can use to obtain families of gaps according to Corollary \ref{basisdiff}.

\begin{lemma} \label{functhn}
Let $n=1,\ldots,2q_0-2$. Then there exists a function $h_n$ on $\mathcal{S}$ that is regular outside $Q_\infty$ and such that
$$(h_n)_{\mathcal{S}}=(n+1)q_0qQ+q_0(2q_0-n-1)\Phi^2(Q)+(2q_0-n-1)\Phi^3(Q)+\Phi^4(Q)-[(n+1)q_0-n](q+2q_0+1)Q_\infty.$$
In particular $v_P(h_n)=(n+1)q_0q$, $v_{P_\infty}(h_n)=-[(n+1)q_0-n](q^2+1)$ and $h_n \in L((2g(\SK)-2)P_\infty)$.
\end{lemma}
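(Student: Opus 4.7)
My plan is to exhibit $h_n$ explicitly as a rational monomial in the already-known functions $\tilde{w}_Q$, $\tilde{w}_{\Phi(Q)}$, and $\tilde{w}_{\Phi^2(Q)}$, whose divisors on $\mathcal{S}$ are prescribed by Equation \eqref{wtilda} and its Frobenius translates (cf.\ Remark \ref{remFrob}; replacing $Q$ by $\Phi^i(Q)$ simply shifts all indices by $i$). Specifically, I will set
\[ h_n := \frac{\tilde{w}_Q^{(n+1)q_0}\cdot \tilde{w}_{\Phi^2(Q)}}{\tilde{w}_{\Phi(Q)}^{\,n+1}}. \]
The exponent $(n+1)q_0$ on $\tilde{w}_Q$ is forced by the required coefficient $(n+1)q_0\,q$ at $Q$, and the exponent $n+1$ in the denominator is then pinned down by the requirement that the coefficient at $\Phi(Q)$ cancels: $\tilde{w}_Q^{(n+1)q_0}$ contributes $(n+1)q_0\cdot 2q_0=(n+1)q$ there, precisely the quantity killed by $\tilde{w}_{\Phi(Q)}^{-(n+1)}$. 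Finally, a single factor $\tilde{w}_{\Phi^2(Q)}$ supplies the coefficient $1$ at $\Phi^4(Q)$ required by the lemma. This is essentially the same construction as the $\alpha_i$ appearing in the proof of Proposition \ref{lem:gensinHP}, but since $Q \not\in \mathcal{S}(\mathbb{F}_{q^4})$ here the points $Q$ and $\Phi^4(Q)$ no longer coincide and appear separately in the divisor.

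The next step is a routine bookkeeping verification of $(h_n)_{\mathcal S}$. Adding contributions at $Q,\Phi(Q),\Phi^2(Q),\Phi^3(Q),\Phi^4(Q)$ yields coefficients $(n+1)q_0 q$, $0$, $q-(n+1)q_0$, $2q_0-(n+1)$, $1$, while at $Q_\infty$ one gets $-((n+1)q_0-n)(q+2q_0+1)$. Using $q=2q_0^2$, the coefficient at $\Phi^2(Q)$ rewrites as $q_0(2q_0-n-1)$, matching the claimed divisor. For $1\leq n\leq 2q_0-2$ the coefficients at $\Phi^2(Q)$ and $\Phi^3(Q)$ are strictly positive, so the effective part of $(h_n)_{\mathcal S}$ is supported on $\{Q,\Phi^2(Q),\Phi^3(Q),\Phi^4(Q)\}$ and $h_n$ is regular outside $Q_\infty$.

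Finally, I push the divisor forward under $\mathrm{pr}:\SK\to\mathcal S$ to obtain the valuations at $P$ and $P_\infty$. Since $P\notin\SK(\mathbb{F}_q)$ the cover is unramified at $P$, giving $v_P(h_n)=v_Q(h_n)=(n+1)q_0 q$. At $P_\infty$ the ramification index equals $q-2q_0+1$, and the identity $(q-2q_0+1)(q+2q_0+1)=q^2+1$ yields $v_{P_\infty}(h_n)=-((n+1)q_0-n)(q^2+1)$. The membership $h_n\in L((2g(\SK)-2)P_\infty)$ amounts to verifying $((n+1)q_0-n)(q^2+1)\leq q^3-2q^2+q-2$; in the worst case $n=2q_0-2$ this reads $(q-3q_0+2)(q^2+1)\leq q^3-2q^2+q-2$, which follows for all $q_0\geq 2$ by direct expansion. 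The hard part of the argument is only the initial guess for the exponents, which is dictated entirely by the cancellation requirement at $\Phi(Q)$; everything beyond that reduces to arithmetic on divisor coefficients.
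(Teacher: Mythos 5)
Your proof is correct and takes essentially the same route as the paper: the function you define, $\tilde{w}_Q^{(n+1)q_0}\tilde{w}_{\Phi^2(Q)}/\tilde{w}_{\Phi(Q)}^{n+1}$, is literally identical to the paper's $h_n=\tilde{w}_{\Phi^2(Q)}\cdot\bigl(\tilde{w}_Q^{q_0}/\tilde{w}_{\Phi(Q)}\bigr)^{n+1}$, and the divisor bookkeeping, positivity check $2q_0-n-1>0$, and pushforward under $\mathrm{pr}$ all match. Your explicit verification of $h_n\in L((2g(\SK)-2)P_\infty)$ in the worst case $n=2q_0-2$ is a small amount of detail the paper leaves implicit, and it checks out.
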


\begin{proof}
It is sufficient to define
$$h_n=\tilde{w}_{\Phi^2(Q)} \cdot \bigg(\frac{\tilde{w}_Q^{q_0}}{\tilde{w}_{\Phi(Q)}}\bigg)^{n+1}.$$
Indeed from Equation \eqref{wtilda} we get
\[
(h_n)_{\mathcal{S}}= (n+1)q_0qQ+q_0(2q_0-n-1)\Phi^2(Q)+(2q_0-n-1)\Phi^3(Q)+\Phi^4(Q)-[(n+1)q_0-n](q+2q_0+1)Q_\infty.
\]
The claim follows by observing that $2q_0-n-1>0$ since $n \leq 2q_0-2$.
\end{proof}

\begin{lemma} \label{firstpair}
There exists a function $f_1$ on $\mathcal{S}$ that is regular outside $Q_\infty$ and such that
$$v_Q(f_1)=q_0q+q_0, \quad \textrm{and} \quad v_{Q_\infty}(f_1) \geq -q_0(q+2q_0+1).$$
In particular $v_P(f_1)=q_0q+q_0$, $v_{P_\infty}(f_1) \geq -q_0(q^2+1)$ and $f_1 \in L((2g(\SK)-2)P_\infty)$.
\end{lemma}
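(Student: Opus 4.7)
\medskip
\noindent\textit{Proof plan.}
The strategy is to exhibit $f_1$ as an explicit product/ratio of the coordinate functions $\tilde{x}_Q, \tilde{y}_Q, \tilde{w}_Q$ and Frobenius translates $\tilde{w}_{\Phi^i(Q)}$, in the same spirit as Lemma~\ref{functhn}, so that its divisor has the prescribed valuations at $Q$ and $Q_\infty$ and is effective elsewhere. The target valuation factors as $q_0q+q_0=q_0\cdot v_Q(\tilde{w}_Q)+1\cdot v_Q(\tilde{y}_Q)$, so the natural starting monomial is $\tilde{y}_Q\tilde{w}_Q^{q_0}$. Using the divisors \eqref{ytilda} and \eqref{wtilda} one computes
\[
(\tilde{y}_Q\tilde{w}_Q^{q_0})_{\mathcal S}=q_0(q+1)Q+(q+1)\Phi(Q)+q_0\Phi^2(Q)+E_y-(q_0q+2q+2q_0)Q_\infty,
\]
which already realises $v_Q=q_0q+q_0$ but overshoots the required pole bound $q_0(q+2q_0+1)=q_0q+q+q_0$ at $Q_\infty$ by exactly $q+q_0$.

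\medskip
\noindent
The remaining task is to correct this excess pole by $q+q_0$ without either disturbing the leading $Q$-valuation or generating new poles off $Q_\infty$. Multiplying by the unit $\tilde{w}_{\Phi^2(Q)}/\tilde{w}_{\Phi(Q)}$ (see Lemma~\ref{functhn}) redistributes the positive part of the divisor among $\Phi^2(Q),\Phi^3(Q),\Phi^4(Q)$ without changing the $Q_\infty$-pole, so the honest reduction of $q+q_0$ must come from an additional factor having pole of order exactly $q+q_0$ at $Q_\infty$ and zeros lined up with the positive part already present. The natural candidate, analogous to the auxiliary function $F_0$ built in Lemma~\ref{lem:PrincDiv}, is a specific linear combination of $1,x,y,z$ (pole at most $q+2q_0$ at $Q_\infty$, zero of order $q_0$ at $Q$) combined with a $\tilde{w}_{\Phi(Q)}$-denominator to adjust the pole. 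The main obstacle, and the technical core of the argument, is controlling the \emph{excess zero divisor} $E_y$ (and the analogous $E_y^{(i)}$) that accompanies any factor built from $\tilde{y}$: the corrector must be chosen so that its own extraneous zeros/poles cancel $E_y$ exactly, yielding a function regular on all of $\mathcal{S}\setminus\{Q_\infty\}$. This is where the global identity $w^q+w=y^{2q_0}(x^q+x)$ of the Suzuki function field enters, linking the $\tilde{w}$- and $\tilde{y}$-filtrations and allowing the bookkeeping to close up.

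\medskip
\noindent
Once $f_1\in \overline{\mathbb F}_q(\mathcal S)$ with $v_Q(f_1)=q_0q+q_0$ and $v_{Q_\infty}(f_1)\ge-q_0(q+2q_0+1)$ has been constructed, the ``In particular'' part is a formal consequence of the cover $\mathrm{pr}:\tilde{\mathcal S}\to\mathcal S$. Since $Q\notin\mathcal S(\mathbb F_q)$ the point $P$ is unramified over $Q$, hence $v_P(f_1)=v_Q(f_1)=q_0q+q_0$. Since $P_\infty$ is totally ramified of index $q-2q_0+1$,
\[
v_{P_\infty}(f_1)=(q-2q_0+1)\,v_{Q_\infty}(f_1)\ge-(q-2q_0+1)q_0(q+2q_0+1)=-q_0(q^2+1),
\]
using the identity $(q-2q_0+1)(q+2q_0+1)=(q+1)^2-(2q_0)^2=q^2+1$, which holds because $4q_0^2=2q$. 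Finally, $q_0(q^2+1)\le q^3-2q^2+q-2=2g(\tilde{\mathcal S})-2$ (an elementary inequality valid for $q=2q_0^2$ with $q_0\ge 2$), so $f_1\in L((2g(\tilde{\mathcal S})-2)P_\infty)$ as claimed.
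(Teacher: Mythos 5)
Your reduction of the problem is set up correctly: the monomial $\tilde{y}_Q\tilde{w}_Q^{q_0}$ does have valuation $q_0q+q_0$ at $Q$ and overshoots the allowed pole at $Q_\infty$ by exactly $q+q_0$, and the final ``In particular'' deductions (unramifiedness of $P$ over $Q$, total ramification at $P_\infty$, the identity $(q-2q_0+1)(q+2q_0+1)=q^2+1$, and the bound $q_0(q^2+1)\le 2g(\SK)-2$) are all fine. However, the heart of the lemma --- the actual existence of a corrector that removes the excess pole at $Q_\infty$ without disturbing $v_Q$ or creating poles elsewhere --- is never produced, and this is precisely the nontrivial content. Moreover, the quantitative description of your candidate is inconsistent: a factor with a zero of order $q_0$ at $Q$ would raise the valuation at $Q$ to $q_0q+2q_0$, destroying the required exact equality $v_Q(f_1)=q_0q+q_0$; a quotient $G/\tilde{w}_{\Phi(Q)}$ with $G$ of pole order at most $q+2q_0$ at $Q_\infty$ lowers the pole at $Q_\infty$ by at least $1$, not by the required $q+q_0$; and if instead you divide by a function of pole order $q+q_0$ at $Q_\infty$, its extraneous zero divisor (the analogue of $E_y$) becomes a pole divisor of $f_1$, and the claim that this can be made to ``cancel exactly'' is exactly what has to be proved --- the (true) identity $w^q+w=y^{2q_0}(x^q+x)$ is invoked but no cancellation is actually carried out.

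The paper's proof takes a different and complete route that avoids handling excess divisors of the form $E_y$ altogether. Using the divisors \eqref{ytilda} and \eqref{wtilda}, multiplication by $\tilde{w}_Q^{q_0}/\tilde{w}_{\Phi(Q)}$ shows that it suffices to construct $\tilde{f}_1$ with $v_Q(\tilde{f}_1)=q_0$, $v_{\Phi^2(Q)}(\tilde{f}_1)\ge q_0$, $v_{\Phi^3(Q)}(\tilde{f}_1)\ge 1$, and pole only at $Q_\infty$ of order at most $q+2q_0+1$; then $f_1=\tilde{f}_1\cdot\tilde{w}_Q^{q_0}/\tilde{w}_{\Phi(Q)}$. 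Such an $\tilde{f}_1$ is found as a linear combination $\alpha\tilde{w}_{\Phi(Q)}+\beta\tilde{y}_{\Phi^2(Q)}+\gamma\tilde{w}_{\Phi^2(Q)}$: the conditions at $\Phi^2(Q)$, $\Phi^3(Q)$ and $Q_\infty$ hold for every choice of $\alpha,\beta,\gamma$, while the exact condition $v_Q(\tilde{f}_1)=q_0$ is achieved by computing local expansions at $Q$ in the parameter $\tilde{x}_Q$ (Equations \eqref{powerseriesytildaPhi2P}, \eqref{tildawPhi2P}, \eqref{tildawPhiP}) and solving the linear system \eqref{system}, which is solvable because $A=a^q+a\neq 0$ for $P\notin\SK(\mathbb{F}_{q^4})$. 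Some argument of this kind --- an explicit construction with a verification that the leading coefficients can be killed while the $\tilde{x}_Q^{q_0}$-coefficient survives --- is missing from your proposal, so as it stands the lemma is not established.
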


\begin{proof}
We have the following linear equivalence of divisors on $\mathcal{S}$,
\begin{eqnarray*}
(q_0q+q_0)Q-q_0(q+2q_0+1)Q_\infty &\cong& (q_0q+q_0)Q-q_0(q+2q_0+1)Q_\infty -(\tilde{w}_Q^{q_0})_{\mathcal{S}}+(\tilde{w}_{\Phi(Q)})_{\mathcal{S}}\\
&=& q_0 Q+ q_0 \Phi^2(Q)+\Phi^3(Q)-(q+2q_0+1)Q_\infty.
\end{eqnarray*}
Hence if we find a function $\tilde{f}_1$ such that $v_Q(\tilde{f}_1)=q_0$, $v_{\Phi^2(Q)}(\tilde{f}_1) \geq q_0$, $v_{\Phi^3(Q)}(\tilde{f}_1) \geq 1$ and $v_{Q_\infty}(\tilde{f}_1) \geq -(q+2q_0+1)$ then we can simply define
$f_1=\tilde{f}_1 \cdot \frac{\tilde{w}_Q^{q_0}}{\tilde{w}_{\Phi(Q)}},$
to complete the proof.

To construct $\tilde{f}_1$ we first observe that if $f_{\alpha,\beta,\gamma}$ denotes a linear combination $f_{\alpha,\beta,\gamma}:=\alpha  \tilde{w}_{\Phi(Q)}+\beta \tilde{y}_{\Phi^2(Q)}+ \gamma \tilde{w}_{\Phi^2(Q)}$ with $\alpha,\beta,\gamma \in \overline{\mathbb{F}}_{q^4}$, then

$$v_{\Phi^2(Q)}(f_{\alpha,\beta,\gamma}) \geq \min\{v_{\Phi^2(Q)}(\tilde{y}_{\Phi^2(Q)}),v_{\Phi^2(Q)}( \tilde{w}_{\Phi^2(Q)}), v_{\Phi^2(Q)}( \tilde{w}_{\Phi(Q)})\}=\min\{q_0,q,2q_0\}=q_0$$
and similarly,
$$v_{\Phi^3(Q)}(f_{\alpha,\beta,\gamma}) \geq \min\{v_{\Phi^3(Q)}(\tilde{y}_{\Phi^2(Q)}),v_{\Phi^3(Q)}( \tilde{w}_{\Phi^2(Q)}),v_{\Phi^3(Q)}( \tilde{w}_{\Phi(Q)})\}=\min\{1,2q_0,1\}=1.$$
Also, from Equations \eqref{ytilda} and \eqref{wtilda}, $f_{\alpha,\beta,\gamma}$ is regular outside $Q_\infty$ and $v_{Q_\infty}(f_{\alpha,\beta,\gamma}) \geq -(q+2q_0+1)$.
Hence we give a local description of the functions $\tilde{y}_{\Phi^2(Q)}$, $\tilde{w}_{\Phi^2(Q)}$ and $\tilde{w}_{\Phi(Q)}$  at $Q$ (equivalently $P$) and check whether we can find a linear combination $f_{\tilde \alpha,\tilde \beta, \tilde \gamma}$ such that $v_Q(f_{\tilde \alpha, \tilde \beta, \tilde \gamma}) =q_0$. Doing so, it will be enough to define $\tilde f_1:=f_{\tilde \alpha, \tilde \beta,\tilde \gamma}$. From Equation \eqref{powerseriesyb} we have that
\begin{eqnarray} \label{powerseriesytildaPhi2P}
\tilde{y}_{\Phi^2(Q)}&=&y+b^{q^2}+a^{q_0q^2}(x+a^{q^2}) =(y+b)+(b^{q^2}+b)+a^{q_0q^2}(\tilde{x}_Q+a+a^{q^2}) \nonumber\\
&=& (a^{q_0} \tilde{x}_Q+A\tilde{x}_Q^{q_0}+O(\tilde{x}_Q^{q_0+1}))+(b^q+b)+(b^q+b)^q+a^{q_0q^2}(\tilde{x}_Q+a+a^{q^2}) \nonumber \\
&=& T(A)+(A^q+A)^{q_0}\tilde{x}_Q+A\tilde{x}_Q^{q_0}+O(\tilde{x}_Q^{q_0+1}),
\end{eqnarray}
where using $a^{q_0}A=b^q+b$,
$$T(A)=(b^q+b)+(b^q+b)^q+a^{q_0q^2}(A^q+A)=a^{q_0}A+a^{q_0q}A^q+a^{q_0q^2}(A^q+A)=A^{q_0+1}+A^{qq_0+1}+A^{qq_0+q}.$$
%If $T(A)=0$ then $V_Q(\tilde{y}_{\Phi^2(Q)})=1$ as $A^q+A=0$ would imply $P \in \SK(\mathbb{F}_{q^4})$. Hence in this case it is enough to define $\tilde f_1:=\tilde{y}_{\Phi^2(Q)}$. Thus, we can assume $T(A) \ne 0$ so that $V_Q(\tilde{y}_{\Phi^2(Q)})=0$.

From Equation \eqref{ztilda}, $z=\tilde{z}_Q+a^{2q_0}(\tilde{x}_Q+a)+b^{2q_0}$ and this combined with Equation \eqref{powerseriesztilda} gives
\begin{eqnarray} \label{tildazPhi2P}
\tilde{z}_{\Phi^2(Q)}&=& a^{2q_0q^2}(\tilde{x}_Q+a)+z+b^{2q_0q^2}\nonumber \\
&=& a^{2q_0q^2}(\tilde{x}_Q+a)+a^{2q_0}(\tilde{x}_Q+a)+b^{2q_0}+b^{2q_0q^2}+\tilde{z}_Q\nonumber\\
%&=&(A^q+A)^{2q_0}\tilde{x}_Q+a(A^q+A)^{2q_0}+(a^{q_0}A+a^{q_0q}A^q)^{2q_0}+\tilde{z}_Q \nonumber \\
&=& (A^q+A)A^{2q_0q}+A^{2q_0+1}+(A^q+A)^{2q_0}\tilde{x}_Q+O(\tilde{x}_Q^{2q_0}).
\end{eqnarray}

Combining Equation \eqref{tildazPhi2P} with Equations \eqref{wtilda} and \eqref{powerserieswtilda} one gets
\begin{eqnarray} \label{tildawPhi2P}
\tilde{w}_{\Phi^2(Q)}&=& a^{q^3}\tilde{z}_{\Phi^2(Q)}+b^{2q_0q^2}x+w+b^{2q^2}+a^{q^2(2q_0+2)} \nonumber\\
&=& a^{q^3}\tilde{z}_{\Phi^2(Q)}+b^{2q_0q^2}(\tilde{x}_Q+a)+(\tilde{w}_Q+a^q \tilde{z}_Q+b^{2q_0}(\tilde{x}_Q+a)+b^2+a^{2q_0+2})+b^{2q^2}+a^{q^2(2q_0+2)} \nonumber \\
&=& a^{q^3}( (A^q+A)A^{2q_0q}+A^{2q_0+1}+(A^q+A)^{2q_0}\tilde{x}_Q)+b^{2q_0q^2}(\tilde{x}_Q+a) \nonumber\\
&&+b^{2q_0}(\tilde{x}_Q+a)+b^2+a^{2q_0+2}+b^{2q^2}+a^{q^2(2q_0+2)} +O(\tilde{x}_Q^{2q_0})\nonumber \\
%&=&  a^{q^3}( (A^q+A)A^{2q_0q}+A^{2q_0+1})+a(a^{q_0}A+a^{q_0q}A^q)^{2q_0}+ (a^{q_0}A+a^{q_0q}A^q)^{2}\nonumber \\
%&&+a^{2q_0+2}+a^{q^2(2q_0+2)} +(a^{q^3}(A^q+A)^{2q_0}+(a^{q_0}A+a^{q_0q}A^q)^{2q_0} )\tilde{x}_Q+O(\tilde{x}_Q^{2q_0})\nonumber \\
%&=&a^{q^3}( (A^q+A)A^{2q_0q}+A^{2q_0+1})+a^{q+1}A^{2q_0}+a^{q^2+1}A^{2q_0q}+a^{2q_0}A^2+a^{2q_0q}A^{2q} \nonumber \\
%&& +a^{2q_0+2}+a^{q^2(2q_0+2)}+(A^{q^2+2q_0q}+A^{q^2+2q_0}+A^{q+2q_0})\tilde{x}_Q+O(\tilde{x}_Q^{2q_0})\nonumber \\
&=& P(A)+T(A)^{2q_0}\tilde{x}_Q+O(\tilde{x}_Q^{2q_0}),
\end{eqnarray}
where, $P(A)=A^{q^2+2q_0q+q}+A^{q^2+2q_0q+1}+A^{q^2+2q_0+1}+A^{q+2q_0+1}=A\cdot  T(A)^{2q_0}+A^{q^2+2q_0q+q}.$
%$$P(A)=a^{q^3}( (A^q+A)A^{2q_0q}+A^{2q_0+1})+a^{q+1}A^{2q_0}+a^{q^2+1}A^{2q_0q}+a^{2q_0}A^2+a^{2q_0q}A^{2q} +a^{2q_0+2}+a^{q^2(2q_0+2)}$$
%$$=A^{q^2+2q_0q+q}+A^{q^2+2q_0q+1}+A^{q^2+2q_0+1}+A^{q+2q_0+1}=A\cdot  T(A)^{2q_0}+A^{q^2+2q_0q+q}.$$
Finally,
\begin{eqnarray*} \label{tildazPhiP}
\tilde{z}_{\Phi^(Q)}&=& a^{2q_0q}(\tilde{x}_Q+a)+z+b^{2q_0q}= a^{2q_0q}(\tilde{x}_Q+a)+a^{2q_0}(\tilde{x}_Q+a)+b^{2q_0}+b^{2q_0q}+\tilde{z}_Q\nonumber\\
%&=&aA^{2q_0}+a^qA^{2q_0}+A^{2q_0}\tilde{x}_Q+O(\tilde{x}_Q^{2q_0}) \nonumber \\
&=& A^{2q_0+1}+A^{2q_0}\tilde{x}_Q+O(\tilde{x}_Q^{2q_0}) ,
\end{eqnarray*}
and hence
\begin{eqnarray} \label{tildawPhiP}
\tilde{w}_{\Phi(Q)}&=& a^{q^2}\tilde{z}_{\Phi(Q)}+b^{2q_0q}x+w+b^{2q}+a^{q(2q_0+2)} \nonumber\\
&=& a^{q^2}A^{2q_0+1}+a^{q^2}A^{2q_0}\tilde{x}_Q+b^{2q_0q}(\tilde{x}_Q+a)+(b^{2q_0}(\tilde{x}_Q+a)+b^2+a^{2q_0+2})+b^{2q}+a^{q(2q_0+2)}+O(\tilde{x}_Q^{2q_0}) \nonumber \\
%&=& a^{q^2}A^{2q_0+1}+a^{q+1}A^{2q_0}+a^{2q_0}A^2+a^{2q_0+2}+a^{q(2q_0+2)}+(a^{q^2}A^{2q_0}+a^qA^{2q_0})\tilde{x}_Q+O(\tilde{x}_Q^{2q_0}) \nonumber \\
&=&A^{q+2q_0+1}+A^{q+2q_0}\tilde{x}_Q+O(\tilde{x}_Q^{2q_0}).
\end{eqnarray}

Equations \eqref{powerseriesytildaPhi2P}, \eqref{tildawPhi2P} and \eqref{tildawPhiP} show that finding $\tilde \alpha$, $\tilde \beta$ and $\tilde \gamma$ such that $v_Q(f_{\tilde \alpha, \tilde \beta, \tilde \gamma})=q_0$ is equivalent to find a solution to the following system of linear equations (corresponding to the coefficients of $\tilde{x}_Q^0$, $\tilde{x}_Q$ and $\tilde{x}_Q^{q_0}$ in the power series expansion of $f_{\alpha,\beta,\gamma}$ respectively),

\begin{equation} \label{system}
\begin{cases}
\alpha A^{q+2q_0+1}+\beta T(A)+\gamma [A\cdot T(A)^{2q_0}+A^{q^2+2q_0q+q}]=0, \\
\alpha A^{q+2q_0}+\beta (A^q+A)^{q_0}+\gamma T(A)^{2q_0}=0, \\
\beta A \ne 0.
\end{cases}
\end{equation}
Since $A \ne 0$ and $(A^q+A)^{q_0}=(T(A)+A^{q_0q+q})/A$, System \eqref{system} can be rewritten as
%\begin{equation} \label{system1}
%\begin{cases}
%\alpha A^{q+2q_0+1}+\beta T(A)+\gamma A\cdot T(A)^{2q_0}+\gamma A^{q^2+2q_0q+q}=0, \\
%\alpha A^{q+2q_0+1}+\beta T(A)+\beta A^{q_0q+q}+\gamma A \cdot T(A)^{2q_0}=0, \\
%\beta\ne 0,
%\end{cases}
%\end{equation}
%which is equivalent to
\begin{equation} \label{system2}
\begin{cases}
\beta=A^{q^2+q_0q} \gamma,\\
\alpha=\frac{\gamma(A^{q^2+q_0q} \cdot T(A)+P(A))}{A^{q+2q_0+1}},\\
\beta\ne 0.
\end{cases}
\end{equation}
To conclude the proof it is now sufficient to define $\tilde{f}_1=f_{\frac{A^{q^2+q_0q}\cdot T(A)+P(A)}{A^{q+2q_0+1}},A^{q^2+q_0q},1}$.
\end{proof}

\begin{lemma} \label{thirdpair}
There exists a function $f_2$ on $\mathcal{S}$ that is regular outside $Q_\infty$ and such that
$$v_Q(f_2)=2q_0q+2q_0+1, \quad \textrm{and} \quad v_{Q_\infty}(f_2) \geq -2q_0(q+2q_0+1).$$
In particular $v_P(f_2)=2q_0q+2q_0+1$, $v_{P_\infty}(f_2) \geq -2q_0(q^2+1)$ and $f_2 \in L((2g(\SK)-2)P_\infty)$.
\end{lemma}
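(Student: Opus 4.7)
The strategy parallels Lemma \ref{firstpair}. First, a direct computation using Equation \eqref{wtilda} (and the identity $4q_0^2 = 2q$ in characteristic $2$) yields
\[
\left(\tilde{w}_Q^{2q_0}/\tilde{w}_{\Phi(Q)}\right)_{\mathcal{S}} = 2q_0 qQ + q\Phi(Q) - \Phi^3(Q) - (2q_0-1)(q+2q_0+1)Q_\infty,
\]
and hence the linear equivalence
\[
(2q_0 q+2q_0+1)Q - 2q_0(q+2q_0+1)Q_\infty \;\cong\; (2q_0+1)Q - q\Phi(Q) + \Phi^3(Q) - (q+2q_0+1)Q_\infty.
\]
Writing $f_2 := \tilde{f}_2 \cdot \tilde{w}_Q^{2q_0}/\tilde{w}_{\Phi(Q)}$, the problem reduces to producing a function $\tilde{f}_2$ on $\mathcal{S}$, regular outside $Q_\infty$, with
\[
v_Q(\tilde{f}_2)=2q_0+1, \quad v_{\Phi^3(Q)}(\tilde{f}_2) \geq 1, \quad v_{Q_\infty}(\tilde{f}_2) \geq -(q+2q_0+1).
\]

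I would look for $\tilde{f}_2$ as a linear combination
\[
\tilde{f}_2 = \alpha\,\tilde{w}_{\Phi(Q)} + \beta\,\tilde{w}_{\Phi^2(Q)} + \gamma\,\tilde{w}_{\Phi^3(Q)} + \delta\,\tilde{z}_{\Phi^3(Q)}, \qquad \alpha,\beta,\gamma,\delta \in \overline{\mathbb{F}}_q.
\]
By Equations \eqref{ztilda} and \eqref{wtilda}, each summand is regular outside $Q_\infty$ with pole order at most $q+2q_0+1$ there, and each has valuation at least $1$ at $\Phi^3(Q)$. Consequently the required conditions at $\Phi^3(Q)$ and $Q_\infty$ hold for every choice of $\alpha,\beta,\gamma,\delta$, and only the condition at $Q$ remains to be arranged.

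The key local computation is to expand each of the four candidate functions in a Taylor series at $Q$ in the local parameter $\tilde{x}_Q$, following the pattern of Lemma \ref{firstpair} and making use of Equations \eqref{powerseriesytilda}--\eqref{powerserieswtilda} together with the shifts described in Remark \ref{remFrob}. A direct inspection shows that their nonzero Taylor coefficients at $Q$ occur only at orders in $\{0, 1, 2q_0, 2q_0+1, q, q+1,\ldots\}$; in particular all orders $2,3,\ldots,2q_0-1$ contribute nothing. Hence the requirement $v_Q(\tilde{f}_2)\geq 2q_0+1$ reduces to a homogeneous linear system of three equations---the vanishing of the coefficients at orders $0$, $1$, and $2q_0$---in the four unknowns $\alpha,\beta,\gamma,\delta$, which therefore admits a non-trivial solution.

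The main technical obstacle will be to show that this solution can be chosen so that the coefficient of $\tilde{x}_Q^{2q_0+1}$ in $\tilde{f}_2$ is nonzero, forcing $v_Q(\tilde{f}_2)=2q_0+1$ exactly rather than $\geq 2q_0+2$. Equivalently, one has to verify that the $4\times 4$ matrix of Taylor coefficients at the orders $0, 1, 2q_0, 2q_0+1$ of the four basis functions is non-singular. This amounts to an explicit arithmetic identity in $a$, $b$, and $A=a^q+a$, of the same flavour as System \eqref{system2} at the end of the proof of Lemma \ref{firstpair}, whose non-vanishing will ultimately stem from $A\neq 0$, i.e., from $P \notin \SK(\mathbb{F}_q)$.
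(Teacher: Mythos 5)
Your reduction step already contains a fatal flaw. By dividing only by $\tilde{w}_{\Phi(Q)}$ (rather than by $\tilde{w}_{\Phi(Q)}^2$), you reduce the problem to finding $\tilde{f}_2$ with $v_{Q_\infty}(\tilde{f}_2)\geq -(q+2q_0+1)$, regular elsewhere, and $v_Q(\tilde{f}_2)=2q_0+1$. But no such function exists on $\mathcal{S}$: the space $L((q+2q_0+1)Q_\infty)$ is spanned by $1,\tilde{x}_Q,\tilde{y}_Q,\tilde{z}_Q,\tilde{w}_Q$ (the first five nongaps at $Q_\infty$ are $0,q,q+q_0,q+2q_0,q+2q_0+1$, so this space is $5$-dimensional), and these five functions have the pairwise distinct valuations $0,1,q_0,2q_0,q$ at $Q$. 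Hence the set of valuations at $Q$ attained by nonzero elements of this space is exactly $\{0,1,q_0,2q_0,q\}$, and $2q_0+1$ is not among them. Your specific ansatz exhibits the obstruction concretely: each of $\tilde{w}_{\Phi(Q)},\tilde{w}_{\Phi^2(Q)},\tilde{w}_{\Phi^3(Q)},\tilde{z}_{\Phi^3(Q)}$ equals $\tilde{w}_Q+\lambda\tilde{z}_Q+(\text{linear in }\tilde{x}_Q)+(\text{const})$ for a suitable constant $\lambda$, and since $\tilde{z}_Q=A\tilde{x}_Q^{2q_0}+\tilde{x}_Q^{2q_0+1}+\cdots$ while $\tilde{w}_Q$ has no terms of order between $2$ and $q-1$, the coefficient of $\tilde{x}_Q^{2q_0+1}$ in any linear combination is exactly $A^{-1}$ times the coefficient of $\tilde{x}_Q^{2q_0}$. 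So forcing the order-$2q_0$ coefficient to vanish automatically kills the order-$(2q_0+1)$ coefficient, and your final "non-singularity" step can never succeed.

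The paper avoids this by writing $f_2=\tilde{f}_2\cdot\tilde{w}_Q^{2q_0}/\tilde{w}_{\Phi(Q)}^{2}$, which relaxes the pole condition to $v_{Q_\infty}(\tilde{f}_2)\geq -2(q+2q_0+1)$ (at the price of the extra requirements $v_{\Phi^2(Q)}(\tilde{f}_2)\geq 2q_0$ and $v_{\Phi^3(Q)}(\tilde{f}_2)\geq 2$). The auxiliary function is then sought in the much larger space of quadratic expressions: concretely, $\tilde{f}_2$ is taken to be a suitable linear combination of the product $\tilde{w}_{\Phi^2(Q)}\cdot\tilde{z}_Q$ and of $\tilde{f}_1^{2}$, where $\tilde{f}_1$ is the function built in the proof of Lemma \ref{firstpair}; a short local computation using $P(A)+AT(A)^{2q_0}=A^{q^2+2q_0q+q}\neq 0$ then shows that the order-$2q_0$ terms cancel while the order-$(2q_0+1)$ term survives. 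If you want to salvage your argument, you must pass to such degree-two combinations; no first-order linear combination of the translates of $\tilde{y},\tilde{z},\tilde{w}$ can work.
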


\begin{proof}
We have the following linear equivalence of divisors on $\mathcal{S}$,
\begin{eqnarray*}
(2q_0q+2q_0+1)Q-2q_0(q+2q_0+1)Q_\infty &\cong& (2q_0q+2q_0+1)Q-2q_0(q+2q_0+1)Q_\infty-(\tilde{w}_Q^{2q_0})_{\mathcal{S}}+(\tilde{w}_{\Phi(Q)}^{2})_{\mathcal{S}}\\
&=& (2q_0+1)Q+ 2q_0 \Phi^2(Q)+2\Phi^3(Q)-2(q+2q_0+1)Q_\infty.
\end{eqnarray*}
Hence if we find a function $\tilde{f}_2$ such that $v_Q(\tilde{f}_2)=2q_0+1$, $v_{\Phi^2(Q)}(\tilde{f}_2) \geq 2q_0$, $v_{\Phi^3(Q)}(\tilde{f}_2) \geq 2$ and $v_{Q_\infty}(\tilde{f}_2) \geq -2(q+2q_0+1)$ then we can simply define
$f_2=\tilde{f}_2 \cdot \frac{\tilde{w}_Q^{2q_0}}{\tilde{w}_{\Phi(Q)}^2},$
to complete the proof.

To construct $\tilde{f}_2$, we first observe that
\begin{eqnarray*}
(\tilde{w}_{\Phi^2(Q)} \cdot \tilde{z}_Q)_{\mathcal{S}}
%&=&q\Phi^2(Q)+2q_0\Phi^3(Q)+\Phi^4(Q)-(q+2q_0+1)Q_\infty+2q_0 Q +E-(q+2q_0)Q_\infty\\
=2q_0 Q +q \Phi^2(Q)+2q_0 \Phi^3(Q)+(E+\Phi^4(Q)) - [2(q+2q_0+1)-1]Q_\infty,
\end{eqnarray*}
where $E$ is effective and not containing $Q$, $\Phi^2(Q)$ and $\Phi^3(Q)$.
Also, from the proof of Lemma \ref{firstpair} we have a function $\tilde{f}_1=\frac{A^{q^2+q_0q}\cdot T(A)+P(A)}{A^{q+2q_0+1}} \tilde{w}_{\Phi(Q)}+A^{q^2+q_0q} \tilde{y}_{\Phi^2(Q)}+ \tilde{w}_{\Phi^2(Q)}$, such that
$(\tilde{f}_1^2)=2q_0 Q+ 2q_0 \Phi^2(Q)+2\Phi^3(Q)+E_1-2(q+2q_0+1)Q_\infty$,
where $E_1$ is effective and not containing $Q$.

Hence if $h_{\alpha,\beta}$ denotes a linear combination $h_{\alpha,\beta}:=\alpha \tilde{w}_{\Phi^2(Q)} \cdot \tilde{z}_Q+\beta \tilde{f}_1^2$ with $\alpha,\beta \in \overline{\mathbb{F}}_{q}$, then

$$v_{\Phi^2(Q)}(h_{\alpha,\beta}) \geq \min\{v_{\Phi^2(Q)}(\tilde{w}_{\Phi^2(Q)} \cdot \tilde{z}_Q), v_{\Phi^2(Q)}( \tilde{f}_1)\}=\min\{q,2q_0\}=2q_0$$
and similarly,
$$v_{\Phi^3(Q)}(h_{\alpha,\beta}) \geq \min\{v_{\Phi^3(Q)}(\tilde{w}_{\Phi^2(Q)} \cdot \tilde{z}_Q), v_{\Phi^3(Q)}( \tilde{f}_1)\}=\min\{2q_0,2\}=2.$$
Also, from Equations \eqref{ytilda}-\eqref{wtilda}, $h_{\alpha,\beta}$ is regular outside $Q_\infty$ and $v_{Q_\infty}(h_{\alpha,\beta}) \geq -2(q+2q_0+1)$.
Hence using the local description of the functions $\tilde{w}_{\Phi^2(Q)}$, $\tilde{z}_Q$ and $\tilde{f}_1$, we find a linear combination $h_{\tilde \alpha,\tilde \beta}$ such that $v_Q(h_{\tilde \alpha, \tilde \beta}) =2q_0+1$ and doing so, it will be enough to define $\tilde f_2:=h_{\tilde \alpha, \tilde \beta}$ to complete the proof.

Systems \eqref{system} and \eqref{system2} yield, $\tilde{f}_1^2=A^{2(q^2+q_0q+1)}\tilde{x}_Q^{2q_0}+O(\tilde{x}_Q^{2q_0+2}),$
%$$\tilde{f}_1^2=\bigg(\frac{A^{q^2+q_0q}\cdot T(A)+P(A)}{A^{q+2q_0+1}} \tilde{w}_{\Phi(Q)}+A^{q^2+q_0q} \tilde{y}_{\Phi^2(Q)}+ \tilde{w}_{\Phi^2(Q)}\bigg)^2=A^{2(q^2+q_0q+1)}\tilde{x}_Q^{2q_0}+O(\tilde{x}_Q^{2q_0+2}),$$
while from Equations \eqref{powerseriesztilda} and \eqref{tildawPhi2P} one has,
\begin{eqnarray*}
\tilde{w}_{\Phi^2(Q)} \cdot \tilde{z}_Q&=&(P(A)+T(A)^{2q_0}\tilde{x}_Q+O(\tilde{x}_Q^{2q_0}))\cdot (A\tilde{x}_Q^{2q_0}+\tilde{x}_Q^{2q_0+1}+O(\tilde{x}_Q^q))\\
&=&A\cdot P(A) \tilde{x}_Q^{2q_0}+[P(A)+A\cdot T(A)^{2q_0}]\tilde{x}_Q^{2q_0+1}+O(\tilde{x}_Q^{2q_0+2}).
\end{eqnarray*}
So it is enough to define
\begin{eqnarray*}
\tilde{f}_2:=h_{1,AP(A)/A^{2(q^2+q_0q+1)}}&=&A\cdot P(A) \tilde{x}_Q^{2q_0}+[P(A)+A\cdot T(A)^{2q_0}]\tilde{x}_Q^{2q_0+1}+A\cdot P(A) \tilde{x}_Q^{2q_0}+O(\tilde{x}_Q^{2q_0+2})\\
&=& [P(A)+A\cdot T(A)^{2q_0}]\tilde{x}_Q^{2q_0+1}+O(\tilde{x}_Q^{2q_0+2}).
\end{eqnarray*}
Recalling that $P(A)+AT(A)^{2q_0}=A^{q^2+2q_0q+q} \ne 0$, we see that $v_Q(\tilde{f}_2)=2q_0+1$ as desired.
\end{proof}
\begin{lemma} \label{lastfamily}
For all $i=0,\ldots,q_0-2$ there exists a function $g_i$ on $\mathcal{S}$ that is regular outside $Q_\infty$ and such that
$$v_Q(g_i)=(2i+1)q_0q+i+1, \quad \textrm{and} \quad v_{Q_\infty}(g_i) \geq -((2i+1)q_0-i)(q+2q_0+1).$$
In particular $v_P(g_i)=(2i+1)q_0q+i+1$, $v_{P_\infty}(g_i) \geq -((2i+1)q_0-i)(q^2+1)$ and $g_i \in L((2g(\SK)-2)P_\infty)$ for all $i=0,\ldots,q_0-2$.
\end{lemma}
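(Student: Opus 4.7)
My plan is to imitate the approach used in Lemmas \ref{firstpair} and \ref{thirdpair}: construct $g_i$ in the form
\[
g_i = \tilde g_i \cdot \frac{\tilde w_Q^{(2i+1)q_0}}{\tilde w_{\Phi(Q)}^{2i+1}},
\]
where $\tilde g_i$ is an auxiliary function on $\mathcal S$ whose properties are arranged so that the factor $\tilde w_Q^{(2i+1)q_0}/\tilde w_{\Phi(Q)}^{2i+1}$ produces the desired order at $Q$ while canceling its poles at $\Phi^2(Q)$ and $\Phi^3(Q)$. Using Equation \eqref{wtilda}, a direct divisor computation shows that this multiplier has divisor $(2i+1)q_0 q Q - (2i+1)q_0 \Phi^2(Q) - (2i+1)\Phi^3(Q) - (2i+1)(q_0-1)(q+2q_0+1) Q_\infty$ on $\mathcal S$. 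Hence the lemma reduces to producing $\tilde g_i$ regular away from $Q_\infty$ with
\[
v_Q(\tilde g_i) = i+1, \quad v_{\Phi^2(Q)}(\tilde g_i) \geq (2i+1)q_0, \quad v_{\Phi^3(Q)}(\tilde g_i) \geq 2i+1, \quad v_{Q_\infty}(\tilde g_i) \geq -(i+1)(q+2q_0+1).
\]

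I would then construct $\tilde g_i$ as a linear combination of degree-$(i+1)$ monomials in the functions $\tilde w_{\Phi(Q)}$, $\tilde w_{\Phi^2(Q)}$, $\tilde y_{\Phi^2(Q)}$, $\tilde z_{\Phi(Q)}$ (and possibly $\tilde z_{\Phi^2(Q)}$). An inspection of the divisors of these functions using Equations \eqref{ytilda}, \eqref{ztilda}, \eqref{wtilda} and their $\Phi$-twisted analogues shows that appropriate monomials have the required high valuations at $\Phi^2(Q)$, $\Phi^3(Q)$ and pole order at most $(i+1)(q+2q_0+1)$ at $Q_\infty$. Imposing that the first $i+1$ Taylor coefficients at $Q$ (with respect to the local parameter $\tilde x_Q$) vanish yields a system of $i+1$ linear equations in the coefficients of the combination; the explicit power series expansions \eqref{powerseriesytildaPhi2P}, \eqref{tildawPhi2P}, \eqref{tildawPhiP}, and the generalizations provided by Remark \ref{remFrob}, render this system explicit.

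The main obstacle is verifying that one can choose a solution of this linear system with $v_Q(\tilde g_i)$ exactly equal to $i+1$, rather than strictly greater. This reduces, as in the proof of Lemma \ref{firstpair}, to showing the non-vanishing of a certain determinant whose entries are polynomial expressions in $A := a^q + a$ (and its iterated Frobenius images). Since $P \not\in \SK(\mathbb{F}_q)$ forces $A \neq 0$, the non-vanishing can be read off the leading monomial in $A$, following the pattern already seen in System \eqref{system}.

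Once $\tilde g_i$ is produced, a direct divisor computation on $\mathcal S$ gives $v_Q(g_i) = (2i+1)q_0q + i + 1$ and $v_{Q_\infty}(g_i) \geq -((2i+1)q_0-i)(q+2q_0+1)$. Pulling back along the cover $\mathrm{pr}:\SK \to \mathcal S$, which is totally ramified of degree $q-2q_0+1$ at $Q_\infty$ and unramified at the points above $Q$, yields the stated valuations at $P$ and $P_\infty$. A direct estimate using $i \leq q_0 - 2$ confirms that $((2i+1)q_0-i)(q^2+1) \leq q^3 - 2q^2 + q - 2 = 2g(\SK) - 2$, so $g_i \in L((2g(\SK)-2)P_\infty)$, completing the proof.
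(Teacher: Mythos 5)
Your reduction step is exactly the paper's: multiplying by $\tilde{w}_Q^{(2i+1)q_0}/\tilde{w}_{\Phi(Q)}^{2i+1}$ and reducing to the construction of an auxiliary function $\tilde{g}_i$ with $v_Q(\tilde{g}_i)=i+1$, $v_{\Phi^2(Q)}(\tilde{g}_i)\geq (2i+1)q_0$, $v_{\Phi^3(Q)}(\tilde{g}_i)\geq 2i+1$ and pole order at most $(i+1)(q+2q_0+1)$ at $Q_\infty$. But from there your strategy diverges, and the divergence is where the gap lies. You propose to take a general linear combination of degree-$(i+1)$ monomials in $\tilde{w}_{\Phi(Q)}$, $\tilde{w}_{\Phi^2(Q)}$, $\tilde{y}_{\Phi^2(Q)}$, $\tilde{z}_{\Phi^2(Q)}$, impose $i+1$ vanishing conditions at $Q$, and then argue that a suitable determinant is nonzero. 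Two things are missing. First, most degree-$(i+1)$ monomials in these functions do \emph{not} satisfy the valuation constraints at $\Phi^2(Q)$ and $\Phi^3(Q)$: for instance $\tilde{y}_{\Phi^2(Q)}^{i+1}$ has valuation only $(i+1)q_0<(2i+1)q_0$ at $\Phi^2(Q)$, and a generic monomial has valuation only $i+1<2i+1$ at $\Phi^3(Q)$. So you must first exhibit at least $i+2$ admissible monomials, which you do not do. Second, and more seriously, showing that some solution of the resulting homogeneous linear system has its $(i+1)$-st Taylor coefficient at $Q$ nonzero amounts to the non-vanishing of a minor of a matrix of size growing with $i$ (up to $q_0-1$), whose entries are polynomials in $A$ and its Frobenius conjugates. "Reading off the leading monomial in $A$" is not an argument here: the cited analogue, System \eqref{system}, is a $2\times 3$ system solved by hand, and nothing in the paper suggests the large determinants are tractable. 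This is precisely the difficulty the statement is designed to hide, so the proposal as written does not constitute a proof.

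For comparison, the paper sidesteps the linear-algebra issue entirely by a multiplicative trick: it constructs two explicit functions $N_1$ and $N_0$, each with valuation \emph{exactly} $1$ at $Q$, with $(N_1)_{\mathcal S}=Q+q_0\Phi^2(Q)+q_0\Phi^3(Q)+E_{N_1}-(q+2q_0+1)Q_\infty$ and $(N_0)_{\mathcal S}=Q+2q_0\Phi^2(Q)+\Phi^3(Q)+E_{N_0}-(q+2q_0+1)Q_\infty$, each obtained as a two-term combination of $\tilde{z}_{\Phi^2(Q)}$, $\tilde{y}_{\Phi^2(Q)}$, $\tilde{w}_{\Phi^2(Q)}$ whose non-degeneracy reduces to explicit identities such as $P_2(A)=A^{q_0q+q+q_0}P(A)^{q_0}\neq 0$. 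Setting $\tilde{g}_i=N_0^i\cdot N_1$ then gives valuation exactly $i+1$ at $Q$ for free (valuations add under multiplication), valuation $(2i+1)q_0$ at $\Phi^2(Q)$, and valuation $q_0+i\geq 2i+1$ at $\Phi^3(Q)$ since $i\leq q_0-2$. If you want to salvage your approach, the realistic path is to find such degree-one building blocks with valuation exactly one at $Q$ and multiply them, rather than attack the full linear system.
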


\begin{proof}
Let $D_i:=((2i+1)q_0q+i+1)Q-((2i+1)q_0-i)(q+2q_0+1)Q_\infty$. Then,
\begin{eqnarray*}
D_i &\cong& ((2i+1)q_0q+i+1)Q-((2i+1)q_0-i)(q+2q_0+1)Q_\infty -(\tilde{w}_Q^{(2i+1)q_0})_{\mathcal{S}}+(\tilde{w}_{\Phi(Q)}^{2i+1})_{\mathcal{S}}\\
&=& (i+1)Q+ (2i+1)q_0 \Phi^2(Q)+(2i+1)\Phi^3(Q)-(i+1)(q+2q_0+1)Q_\infty.
\end{eqnarray*}
Hence if we find a function $\tilde{g}_i$ such that $v_Q(\tilde{g}_i)=i+1$, $v_{\Phi^2(Q)}(\tilde{g}_i) \geq (2i+1)q_0$, $v_{\Phi^3(Q)}(\tilde{g}_i) \geq 2i+1$ and $v_{Q_\infty}(\tilde{g}_i) \geq ((2i+1)q_0-i)(q+2q_0+1)$ then we simply define $g_i=\tilde{g}_i \cdot \bigg(\frac{\tilde{w}_Q^{q_0}}{\tilde{w}_{\Phi(Q)}}\bigg)^{2i+1}$ to complete the proof.

To this end we first compute some local power series expansions at $\Phi^3(P)$ (equivalently $\Phi^3(Q)$) using the local parameter $\tilde{x}_{\Phi^3(Q)}=x+a^{q^3}=\tilde{x}_Q+a+a^{q^3}=\tilde{x}_Q+A+A^q+A^{q^2}$.
Clearly, we can use the expressions we already obtained in Equations \eqref{powerseriesyb}-\eqref{powerserieswtilda} to describe the local power series expansion of $y+b^{q^3}$, $\tilde{y}_{\Phi^3(Q)}$, $\tilde{z}_{\Phi^3(Q)}$, $\tilde{w}_{\Phi^3(Q)}$ at $\Phi^3(P)$  simply replacing $a$ with $a^{q^3}$ and hence $A$ with $A^{q^3}$.
From Equation \eqref{powerseriesyb} and $a^{q_0}A=b^q+b$, one has
$
y+b^{q^2}= (y+b^{q^3})+(b^{q^3}+b^{q^2})=a^{q_0q^2}A^{q^2}+a^{q_0q^3} \tilde{x}_{\Phi^3(Q)} +A^{q^3} \tilde{x}_{\Phi^3(Q)}^{q_0}+ \tilde{x}_{\Phi^3(Q)}^{q_0+1}+O( \tilde{x}_{\Phi^3(Q)}^{q}).
$
Hence
\begin{eqnarray} \label{powerseriesphi3P1}
\tilde{y}_{\Phi^2(Q)}&=&(y+b^{q^2})+a^{q_0q^2}(x+a^{q^2})=(y+b^{q^2})+a^{q_0q^2}(\tilde{x}_{\Phi^3(Q)}+A^{q^2}) \nonumber \\
&=&A^{q_0q^2} \tilde{x}_{\Phi^3(Q)} +A^{q^3} \tilde{x}_{\Phi^3(Q)}^{q_0}+ \tilde{x}_{\Phi^3(Q)}^{q_0+1}+O( \tilde{x}_{\Phi^3(Q)}^{q}).
\end{eqnarray}
Similarly using Equation \eqref{powerseriesztilda} and $z=\tilde{z}_{\Phi^3(Q)}+a^{2q_0q^3}x+b^{2q_0q^3}$ one gets,
\begin{eqnarray} \label{powerseriesphi3P2}
\tilde{z}_{\Phi^2(Q)}&=& a^{2q_0q^2}( \tilde{x}_{\Phi^3(Q)}+a^{q^3})+z+b^{2q_0q^2} \nonumber \\
&=& a^{2q_0q^2}( \tilde{x}_{\Phi^3(Q)}+a^{q^3})+\tilde{z}_{\Phi^3(Q)}+a^{2q_0q^3}x+b^{2q_0q^3}+b^{2q_0q^2} \nonumber \\
%&=& a^{2q_0q^2}( \tilde{x}_{\Phi^3(Q)}+a^{q^3})+\tilde{z}_{\Phi^3(Q)}+a^{2q_0q^3}( \tilde{x}_{\Phi^3(Q)}+a^{q^3})+b^{2q_0q^3}+b^{2q_0q^2} \nonumber \\
&=& A^{2q_0q^2} \tilde{x}_{\Phi^3(Q)}+\tilde{z}_{\Phi^3(Q)}=A^{2q_0q^2} \tilde{x}_{\Phi^3(Q)}+O(\tilde{x}_{\Phi^3(Q)}^{2q_0}).
\end{eqnarray}
We define $H_1:=\tilde{z}_{\Phi^2(Q)}+A^{q_0q^2}\tilde{y}_{\Phi^2(Q)}$. Then using Equations \eqref{powerseriesphi3P1} and \eqref{powerseriesphi3P2},
\begin{eqnarray*} \label{H1}
H_1&=&A^{q_0q^2}( A^{q_0q^2}\tilde{x}_{\Phi^3(Q)} +A^{q^3} \tilde{x}_{\Phi^3(Q)}^{q_0}+ \tilde{x}_{\Phi^3(Q)}^{q_0+1}) + A^{2q_0q^2} \tilde{x}_{\Phi^3(Q)}+O( \tilde{x}_{\Phi^3(Q)}^{2q_0})\nonumber \\
&=&A^{q^3+q_0q^2} \tilde{x}_{\Phi^3(Q)}^{q_0}+ A^{q_0q^2}\tilde{x}_{\Phi^3(Q)}^{q_0+1}+O( \tilde{x}_{\Phi^3(Q)}^{2q_0}).
\end{eqnarray*}
Since $A \ne 0$ this shows that $v_{\Phi^3(Q)}(H_1)=q_0$. Also note that
$$v_{\Phi^2(Q)}(H_1) = \min\{v_{\Phi^2(Q)}(\tilde{z}_{\Phi^2(Q)}), v_{\Phi^2(Q)}( \tilde{y}_{\Phi^2(Q)})\}=\min\{2q_0,q_0\}=q_0.$$
To compute the valuation of $H_1$ at $P$ we see that from Equations \eqref{powerseriesytildaPhi2P} and \eqref{tildazPhi2P},
\begin{eqnarray*}
H_1=\tilde{z}_{\Phi^2(Q)}+A^{q_0q^2}\tilde{y}_{\Phi^2(Q)}
%&=& (A^q+A)A^{2q_0q}+A^{2q_0+1}+(A^q+A)^{2q_0}\tilde{x}_Q\\
%&&+A^{q_0q^2}(T(A)+(A^q+A)^{q_0}\tilde{x}_Q+A\tilde{x}_Q^{q_0}+O(\tilde{x}_Q^{q_0+1}))\\
=P_1(A)+[(A^q+A)^{2}+A^{q^2}(A^q+A)]^{q_0}\tilde{x}_Q+O(\tilde{x}_Q^{q_0}),
\end{eqnarray*}
where $P_1(A)=(A^q+A)A^{2q_0q}+A^{2q_0+1}+A^{q_0q^2}T(A)$.
Note that the coefficient of $\tilde{x}_Q$ in $H_1$ is not zero as both $A^q+A= 0$ and $A^{q^2}+A^q+A=0$ imply $P \in \SK(\mathbb{F}_{q^4})$.

If $P_1(A)=0$ then $v_Q(H_1)=1$ and we define $N_1:=H_1$. Otherwise recalling that from Equation \eqref{tildawPhi2P} we have $\tilde{w}_{\Phi^2(Q)}=P(A)+T(A)^{2q_0}\tilde{x}_Q+O(\tilde{x}_Q^{2q_0}),$
we define $N_1:=P_1(A)\tilde{w}_{\Phi^2(Q)}+P(A)H_1.$
Doing so,
\begin{eqnarray*}
N_1&=&P_1(A)(P(A)+T(A)^{2q_0}\tilde{x}_Q+O(\tilde{x}_Q^{2q_0}))+P(A)(P_1(A)+[(A^q+A)^{2}+A^{q^2}(A^q+A)]^{q_0}\tilde{x}_Q+O(\tilde{x}_Q^{q_0}))\\
%&=&(P_1(A)T(A)^{2q_0}+P(A)[(A^q+A)^{2}+A^{q^2}(A^q+A)]^{q_0})\tilde{x}_Q +O(\tilde{x}_Q^{q_0})\\
&=&P_2(A)\tilde{x}_Q+O(\tilde{x}_Q^{q_0}).
\end{eqnarray*}
Using $P(A)=AT(A)^{2q_0}+A^{q^2+2q_0q+q}$, $(A^q+A)^{q_0}=(T(A)+A^{q_0q+q})/A$ and $P \not\in \SK(\mathbb{F}_{q^4})$, one obtains
$
P_2(A)=P_1(A)T(A)^{2q_0}+P(A)[(A^q+A)^{2}+A^{q^2}(A^q+A)]^{q_0}
%&=&[(A^q+A)A^{2q_0q}+A^{2q_0+1}+A^{q_0q^2}T(A)]T(A)^{2q_0}+(AT(A)^{2q_0}+A^{q^2+2q_0q+q})[(A^q+A)^{2}+A^{q^2}(A^q+A)]^{q_0}\\
%&=&T(A)^{2q_0}[(A^q+A)A^{2q_0q}+A^{2q_0+1}+A^{q_0q^2}T(A)+A(A^q+A)^{2q_0}+A^{q_0q^2+1}(A^q+A)^{q_0}]\\
%&&+A^{q^2+2q_0q+q}[(A^q+A)^{2q_0}+A^{q_0q^2}(A^q+A)^{q_0}]\\
%&=&T(A)^{2q_0}[A^{2q_0q+q}+A^{q_0q^2+q_0q+q}]+A^{q^2+2q_0q+q}[(A^q+A)^{2q_0}+A^{q_0q^2}(A^q+A)^{q_0}]\\
%&=&(A^{q+2q_0}+A^{q^2+2q_0}+A^{q^2+2q-0q})[A^{2q_0q+q}+A^{q_0q^2+q_0q+q}]+A^{q^2+2q_0q+q}[(A^q+A)^{2q_0}+A^{q_0q^2}(A^q+A)^{q_0}]\\
=A^{q_0q+q+q_0}P(A)^{q_0} \ne 0.
$

Since $N_1$ is a linear combination of $\tilde{w}_{\Phi^2(Q)}$ and $H_1$, both the valuations of $N_1$ at $\Phi^2(Q)$ and $\Phi^3(Q)$ are at least $q_0$.
This shows that, in both cases $P_1(A)=0$ and $P_1(A) \ne 0$, one has
\begin{equation} \label{n1}
(N_1)_{\mathcal{S}}=Q+q_0\Phi^2(Q)+q_0\Phi^3(Q)+E_{N_1}-(q+2q_0+1)Q_\infty,
\end{equation}
where $E_{N_1}$ is effective and with support not containing $Q$.
Finally, we show that there exists a function $N_0$ on $\mathcal{S}$ such that
\begin{equation} \label{n0}
(N_0)_{\mathcal{S}}=Q+2q_0\Phi^2(Q)+\Phi^3(Q)+E_{N_0}-(q+2q_0+1)Q_\infty,
\end{equation}
where $E_{N_0}$ is effective and with support not containing $Q$. Indeed since we already obtained in Equations \eqref{tildazPhi2P} and \eqref{tildawPhi2P} that
$\tilde{z}_{\Phi^2(Q)}=(A^q+A)A^{2q_0q}+A^{2q_0+1}+(A^q+A)^{2q_0}\tilde{x}_Q+O(\tilde{x}_Q^{2q_0}),$
and
$\tilde{w}_{\Phi^2(Q)}=P(A)+T(A)^{2q_0}\tilde{x}_Q+O(\tilde{x}_Q^{2q_0}),$
we can define $N_0:=P(A)\tilde{z}_{\Phi^2(Q)}+[(A^q+A)A^{2q_0q}+A^{2q_0+1}]\tilde{w}_{\Phi^2(Q)}$. Doing so,
$$
N_0=
%P(A)[(A^q+A)A^{2q_0q}+A^{2q_0+1}+(A^q+A)^{2q_0}\tilde{x}_Q]\\
%&&+[(A^q+A)A^{2q_0q}+A^{2q_0+1}](P(A)+T(A)^{2q_0}\tilde{x}_Q)+O(\tilde{x}_Q^{2q_0})\\
(P(A)(A^q+A)^{2q_0}+[(A^q+A)A^{2q_0q}+A^{2q_0+1}]T(A)^{2q_0})\tilde{x}_Q+O(\tilde{x}_Q^{2q_0}),\\
$$
where by direct checking $P(A)(A^q+A)^{2q_0}+[(A^q+A)A^{2q_0q}+A^{2q_0+1}]T(A)^{2q_0}=A^{2q_0q+2q+2q_0} \ne 0$,
%$P(A)(A^q+A)^{2q_0}+[(A^q+A)A^{2q_0q}+A^{2q_0+1}]T(A)^{2q_0}=(AT(A)^{2q_0}+A^{q^2+2q_0q+q})(A^q+A)^{2q_0}+[(A^q+A)A^{2q_0q}+A^{2q_0+1}]T(A)^{2q_0}=T(A)^{2q_0}A^{2q_0q+q}+A^{q^2+2q_0q+q}(A^q+A)^{2q_0}=(A^{q_0+1}+A^{qq_0+1}+A^{qq_0+q})^{2q_0}A^{2q_0q+q}+A^{q^2+2q_0q+q}(A^q+A)^{2q_0}=(A^{q+2q_0}+A^{q^2+2q_0}+A^{q^2+2q_0 q})A^{2q_0q+q}+A^{q^2+2q_0q+q}(A^q+A)^{2q_0}=A^{2q_0q+2q+2q_0} \ne 0$;
and hence $v_Q(N_0)=1$.
Also, $v_{\Phi^2(Q)}(N_0) = \min\{v_{\Phi^2(Q)}(\tilde{z}_{\Phi^2(Q)}), v_{\Phi^2(Q)}( \tilde{w}_{\Phi^2(Q)})\}=\min\{2q_0,q\}=2q_0,$ and $v_{\Phi^3(Q)}(N_0) = \min\{v_{\Phi^3(Q)}(\tilde{z}_{\Phi^2(Q)}), v_{\Phi^3(Q)}( \tilde{w}_{\Phi^2(Q)})\}=\min\{1,2q_0\}=1,$ as desired.
%The valuations at $\Phi^2(Q)$ and $\Phi^3(Q)$ are given by the fact the $N_0$ is a linear combination of $\tilde{z}_{\Phi^2(Q)}$ and $\tilde{w}_{\Phi^2(Q)}$.
Using Equations \eqref{n1} and \eqref{n0} it is now enough to define $\tilde{f}_i=N_0^i \cdot N_1$. Indeed one has,
$$
(\tilde{f}_i)_{\mathcal{S}}
%&=&iQ+2iq_0\Phi^2(Q)+i\Phi^3(Q)+iE_{N_0}-i(q+2q_0+1)Q_\infty+\\
%&&Q+q_0\Phi^2(Q)+q_0\Phi^3(Q)+E_{N_1}-(q+2q_0+1)Q_\infty\\
=(i+1)Q+(2i+1)q_0\Phi^2(Q)+(q_0+i)\Phi^3(Q)+(iE_{N_0}+E_{N_1})-(i+1)(q+2q_0+1)Q_\infty,
$$
where $q_0+i>2i+1$ as $i \leq q_0-2$.
\end{proof}

To construct gaps using Corollary \ref{basisdiff} we will look at functions of the form
\begin{equation} \label{eq:functions}
  \tilde{x}_Q^{a_1} \cdot \tilde{y}_Q^{a_2} \cdot \tilde{z}_Q^{a_3} \cdot \tilde{w}_Q^{a_4} \cdot \prod_{n=1}^{2q_0-2} h_n^{b_n} \cdot f_1^c \cdot f_2^d \cdot \prod_{n=0}^{q_0-2} g_n^{e_n} \cdot \pi_P^f
\end{equation}
for suitable choices of exponents $a_1, a_2, a_3, a_4, b_1, \dots, b_{2q_0-2}, c, d, e_0, \dots, e_{q_0-2}, f$.

The six families of natural numbers $F_1,F_2,F_3,F_4,F_5,F_6$ defined in Theorem \ref{all} correspond to sets of valuations of functions as in Equation \eqref{eq:functions}. Indeed our aim is to show that $G(P) = F := \cup_{i=1}^6 F_i$.
To this end we proceed with the following two steps: first, we prove that $F$ contains exactly $g(\SK)$ elements. Then we prove that the functions as in Equation \eqref{eq:functions} whose valuations are contained in $F$, are in $L((2g(\SK)-2)P_\infty)$.
\begin{proposition} \label{distinctgaps}
 The set $F$ consists of $g(\SK)$ distinct natural numbers.
\end{proposition}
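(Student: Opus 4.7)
The plan is to establish the proposition in three steps: (i) within each family $F_i$, distinct parameter tuples give distinct natural numbers; (ii) the six families are pairwise disjoint; and (iii) each $|F_i|$ can be computed directly from its parameter ranges, and the six contributions sum to $g(\SK)=q(q-1)^2/2$. Together these yield $|F|=g(\SK)$.

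The key observation driving step (i) is that the common quantity $\nu=a_1+a_2q_0+2a_3q_0+a_4q+fq^2$ admits a unique mixed-radix expansion. Since $(a_2,a_3)\in\{0,1\}\times\{0,\ldots,q_0-1\}$ gives $a_2+2a_3\in\{0,1,\ldots,2q_0-1\}$ bijectively, the residue $a_1+q_0(a_2+2a_3)$ is exactly $\nu\bmod q$, so $\nu$ determines $(a_1,a_2,a_3)$, and the quotient of $\nu$ by $q$ yields $(a_4,f)$. For $F_2,\ldots,F_6$ the analogous mixed-radix expansion of $v$, combined with the equality constraint on $\sigma$, recovers the auxiliary parameter $n$ (or $c,d$) and hence the whole tuple. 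For instance, in $F_2$ one checks that swapping $(n,a_4,f)$ with $(n+1,a_4-q_0,f+1)$ would shift $v$ by exactly $q^2$, so no collision arises; similar comparisons handle $F_3$ through $F_6$.

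For step (ii) I intend to distinguish the families by tracking $v\bmod q_0$, $v\bmod q$, and the coefficient of $q_0q$ in the base-$q$ expansion of $v$. For example, all elements of $F_1\cup F_2$ satisfy $v\equiv a_1+1\pmod{q_0}$, whereas for $F_3,F_4,F_6$ one has $v\equiv a_1+n+2$ or $a_1+n+3\pmod{q_0}$, and for $F_5$ one has $v\equiv a_1+d+1\pmod{q_0}$; the explicit ranges on $n,d$ and the $a_1$-bounds (in particular $a_1\le q_0-2-n$ for $F_3$, $a_1\le q_0-3-n$ for $F_4$, and $a_1=0$ for $F_5, F_6$) then rule out cross-family overlaps. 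When residues alone are inconclusive the fixed values of $\sigma$ serve as the final discriminator.

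Step (iii) is a direct combinatorial count: for each $F_i$ the $\sigma$-equality determines $f$ in terms of the remaining coordinates, so $|F_i|$ reduces to a polynomial expression in $q_0$, and the six contributions can be added to give $q(q-1)^2/2$. I expect the main obstacle to be bookkeeping rather than any conceptual difficulty: one must simultaneously enforce the $\sigma$-equality, the bounds on $a_1,\ldots,a_4$, the nonnegativity of $f$, and the parameter ranges on $n$ or $(c,d)$, and then simplify the six sums. The most delicate comparisons, both for disjointness in step (ii) and for simplification in step (iii), will be those involving $F_3,F_4,F_6$ (whose offsets and $\sigma$-values differ only by small amounts) and $F_5$ (whose two auxiliary parameters $c,d$ make its contribution structurally different from the others).
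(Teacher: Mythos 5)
Your plan follows the paper's proof essentially verbatim: intra-family distinctness via the mixed-radix/modular decomposition of $\nu$ (reducing modulo $q_0$, $2$, $q_0$, $q$ in turn), pairwise disjointness via residues combined with the $\sigma$-constraints, and a concluding inclusion--exclusion count of each $|F_i|$ summing to $g(\SK)$. The only point you do not flag is that the general counting argument needs $s>2$ (several of the inclusion--exclusion ranges degenerate for small $q_0$), so the cases $s=1,2$ must be verified separately, which the paper does by direct computation.
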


\begin{proof}
If $s=1,2$ the claim can be checked directly using a computer. Table \ref{tab:table1} collects the cardinalities of the families $F_i's$ in these two cases.
\begin{table}[h!]
  \begin{center}
    \caption{The cases $s=1$ and $s=2$.}
    \label{tab:table1}
    \begin{tabular}{l|c|c|c|c|c|c|c|r}
      $s$ & $|F_1|$ & $|F_2|$ & $|F_3|$ & $|F_4|$ & $|F_5|$ & $|F_6|$ & $|F|$ & $g(\SK)$ \\
     \hline
      $1$ & $146 $ & $31$ & $8$ & $0$ & $9$ & $2$ & $196$ & $196$ \\
      $2$ & $12584$ & $2393$ & $192$ & $96$ & $87$ & $24$ & $15376$ & $15376$ \\
      \hline
    \end{tabular}
  \end{center}
\end{table}

So in the following, we assume $s>2$.
  We first show that for all $i=1,\dots,6$ different choices of coefficients within the same family $F_i$ give rise to distinct natural numbers.
  \begin{itemize}
  \item For $i=1$, assume by contradiction that there exists element $v \in F_1$ having two different expressions, namely
    \begin{equation} \label{eq:distinctF1}
      a_1 + a_2 q_0 + 2 a_3 q_0 + a_4q + fq^2 + 1 = a_1^\prime + a_2^\prime q_0 + 2 a_3^\prime q_0 + a_4^\prime q + f^\prime q^2 + 1.
    \end{equation}
    Considering Equation \eqref{eq:distinctF1} modulo $q_0$ we obtain $a_1 \equiv  a_1^\prime \mod q_0$ and therefore $a_1 = a_1^\prime$, since $0 \leq a_1, a_1^\prime \leq q_0-1$. So, Equation \eqref{eq:distinctF1} can be simplified as $ a_2 + 2 a_3 + 2a_4 q_0 + 2f qq_0 = a_2^\prime + 2 a_3^\prime + 2a_4^\prime q_0 + 2f^\prime qq_0. $
    Repeating the same procedure, firstly considering the equality modulo $2$, then modulo $q_0$ again and finally modulo $q$, we obtain $a_2 = a_2^\prime, a_3 = a_3^\prime, a_4 = a_4^\prime$ respectively. As a consequence, $f = f^\prime$. This yields a contradiction.
  \item As before, for $i=2$ let
    $$ a_1 + a_2 q_0 + 2 a_3 q_0 + a_4q + fq^2 + (n+1)qq_0 + 1 = a_1^\prime + a_2^\prime q_0 + 2 a_3^\prime q_0 + a_4^\prime q + f^\prime q^2 + (n^\prime+1)qq_0 + 1. $$
    With a similar argument used for $i=1$, we can reduce the above equality modulo $q_0$, then modulo 2, then modulo $q_0$ again and finally modulo $q$ to obtain $a_1 = a_1^\prime, a_2 = a_2^\prime, a_3 = a_3^\prime$ and $a_4 + nq_0 = a_4^\prime + n^\prime q_0$ respectively. As a consequence $f = f^\prime$. The conditions $a_1 + a_2 + a_3 + a_4 + f = q-q_0-2-nq_0+n$ and $a_1^\prime + a_2^\prime + a_3^\prime + a_4^\prime + f^\prime = q-q_0-2-n^\prime q_0+n^\prime$ now imply that $n = n^\prime$ and $a_4 = a_4^\prime$.
\item The argument used for $i=3,\ldots,6$ is exactly the same as for $i=1,2$ and hence it is omitted.
%  \item The argument for $i=3$ and $i=4$ is analogous, so let us consider the equality
%    $$ a_1 + a_2 q_0 + 2 a_3 q_0 + a_4q + fq^2 + (2n+1)qq_0+n+1 = a_1^\prime + a_2^\prime q_0 + 2 a_3^\prime q_0 + a_4^\prime q + f^\prime q^2 + (2n^\prime+1)qq_0+n^\prime+1 $$
%    only. Reducing modulo $q_0$, then modulo 2 and the three times modulo $q_0$ we obtain, respectively, $a_1 + n = a_1^\prime + n^\prime, a_2 = a_2^\prime, a_3 = a_3^\prime, a_4 = a_4^\prime, n = n^\prime$. Consequently, $f = f^\prime$ and $a_1 = a_1^\prime$.
%  \item The procedure for $i=5$ and $i=6$ is similar to the one used above and we leave it to the reader.
  \end{itemize}
  Next, one needs to prove that $F_i \cap F_j = \emptyset$, for all $i,j=1,\dots,6, i \neq j$. To this end, one can use exactly the same method used in the first part of this proof. For this reason just the first two cases are proven here in full details. Let $\sigma = a_1 + a_2 + a_3 + a_4 + f$.
  \begin{itemize}
  \item $F_1 \cap F_2 = \emptyset$: suppose by contradiction that there exists $v \in F_1\cap F_2$. Then:
    \begin{align*}
      v & = a_1+a_2 q_0+2a_3 q_0 + a_4 q + (\sigma-a_1-a_2-a_3-a_4) q^2+1 \\
        & = a'_1+a'_2 q_0+2a'_3 q_0 + a'_4 q + (n+1)q_0 q + (q-q_0-2-nq_0+n-a'_1-a'_2-a'_3-a'_4)q^2+1.
    \end{align*}
    Considering the equality above modulo $q_0$, then modulo $2$ and finally modulo $q_0$ we get $a_1=a_1^\prime, a_2=a'_2,a_3=a'_3$. This yields
    $$ a_4+ (\sigma-a_4) q = a'_4 + (n+1)q_0 + (q-q_0-2-nq_0+n-a'_4)q. $$
    Since $a_4 \leq q-2$ and $a'_4+(n+1)q_0 \leq q-q_0-1-nq_0+nq_0+q_0=q-1$, the equality above modulo $q$ gives $a_4=a'_4+(n+1)q_0$ and $\sigma = q-2+n$, a contradiction to $\sigma \leq q-2$ as $n \geq 1$.
  \item $F_1 \cap F_3 = \emptyset$: suppose by contradiction that there exists $v \in F_1\cap F_3$. Then we can write
    \begin{align*}
      v & = a_1+a_2 q_0+2a_3 q_0 + a_4 q + (\sigma -a_1-a_2-a_3-a_4)q^2+1 \\
        & = a'_1+a'_2 q_0+2a'_3 q_0 + a'_4 q +(2n+1)q_0q + n +1 +(q-q_0-2-2nq_0+n-a'_1-a'_2-a'_3-a'_4)q^2+1.
    \end{align*}
    Arguing as in the previous case one gets $a_1=a'_1+n+1, a_2=a'_2$ and $a_3=a'_3$, so that
    $$ a_4 + (\sigma -n-a_4)q= a'_4+(2n+1)q_0+(q-q_0-1-2nq_0+n-a'_4)q.$$
    Note that $a'_4+(2n+1)q_0 \leq q_0-1+(2q_0-3)q_0<q$ so that considering the equality modulo $q$ one gets $a_4=a'_4+(2n+1)q_0$. Hence $\sigma =q-1+2n \geq q-1$, a contradiction.
\end{itemize}

Finally, we need to prove that $F$ consists of $g(\SK)$ distinct elements. From the previous step it is enough to check that there are exactly $g(\SK)$ possible distinct choices of such coefficients.

For a given $\sigma \in \mathbb{N}$, we denote the number of combinations of five natural numbers $a_1, a_2, a_3, a_4, f$ such that $a_1 + a_2 + a_3 + a_4 + f = \sigma$ with $\mathcal{B}(\sigma) = \binom{\sigma+4}{4}$.
Observe that from \cite[Equation 5.10]{CM} for any $n \in \mathbb{N}$ it holds:
$$
%  & \sum_{\sigma=0}^{n} \mathcal{B}(\sigma) = \sum_{\sigma=0}^{n} \frac{(\sigma+4)!}{\sigma! \cdot 4 !} = \frac{1}{24}\sum_{\sigma=0}^{n} (\sigma+4)(\sigma+3)(\sigma+2)(\sigma+1) = \frac{1}{24}\sum_{\sigma=0}^{n} (\sigma^4 + 10\sigma^3 + 35\sigma^2 + 50\sigma + 24) \\
  \sum_{\sigma=0}^{n} \mathcal{B}(\sigma) =\sum_{\sigma'=4}^{n+4} {{\sigma'}\choose{4}}={{n+5}\choose{5}}
   = \frac{n(n+1)}{120} (n^3 + 14n^2 + 71n + 154) + 1.
$$
It is also useful to denote the number of combinations of four natural numbers $a_2, a_3, a_4, f$ such that $a_2 + a_3 + a_4 + f = \sigma$ with $\mathcal{B}^\prime(\sigma) = \binom{\sigma+3}{3}$ and, similarly, the number of combinations of three natural number $a_3, a_4, f$ such that $a_3 + a_4 + f = \sigma$ with $\mathcal{B}^{\prime\prime}(\sigma) = \binom{\sigma+2}{2}$.
\begin{itemize}
\item Let us count the number of elements in $F_1$. By the inclusion-exclusion principle, this is given by:
  \begin{align*}
    & \sum_{\sigma=0}^{q-2} \mathcal{B}(\sigma) - \left( \sum_{\sigma_1=0}^{q-2-q_0} \mathcal{B}(\sigma_1) + \sum_{\sigma_2=0}^{q-2-2} \mathcal{B}(\sigma_2) + \sum_{\sigma_3=0}^{q-2-q_0} \mathcal{B}(\sigma_3) \right) \\
    & + \left( \sum_{\sigma_{1,2}=0}^{q-2-q_0-2} \mathcal{B}(\sigma_{1,2}) + \sum_{\sigma_{2,3}=0}^{q-2-2-q_0} \mathcal{B}(\sigma_{2,3}) + \sum_{\sigma_{1,3}=0}^{q-2-q_0-q_0} \mathcal{B}(\sigma_{1,3}) \right) - \sum_{\sigma_{1,2,3}=0}^{q-2-q_0-2-q_0} \mathcal{B}(\sigma_{1,2,3}) \\
    & = \frac{q^3}{2} - q^2q_0 + \frac{7}{24}q^2 - \frac{1}{12}q.
  \end{align*}

\item Let us now count the number of elements in $F_2$. In this case $\sigma=q-q_0-2-nq_0+n$ in $F_2$. Observe that:
$\sigma-q_0, \sigma-2$ are always non-negative, while $\sigma-(q-q_0-nq_0)$ is non-negative only for $n > 1$;
$\sigma-2q_0, \sigma-2-q_0$ are non-negative as $s > 1$; $\sigma-(q-q_0-nq_0)-2$ and $\sigma-(q-q_0-nq_0) - q_0$ are non-negative only for $n > 3$ and $n > q_0+1$, respectively. Both conditions can hold true as $s > 1$;
$\sigma-2q_0-2$ is non-negative since $s > 1$; $\sigma-(q-q_0-nq_0)-2-q_0$ is non-negative only for $n > q_0 + 3$, which can hold true as $s > 2$; $\sigma-(q-q_0-nq_0)-2q_0$ is always negative; and $\sigma-(q-q_0-nq_0)-2-2q_0$ is always negative.
Hence, the number of elements in $F_2$ is given by:
  \begin{align*}
    & \sum_{n=1}^{2q_0-2} \mathcal{B}(\sigma) - \left( \sum_{n=1}^{2q_0-2} \mathcal{B}(\sigma-2) + \sum_{n=1}^{2q_0-2} 2\mathcal{B}(\sigma-q_0) + \sum_{n=2}^{2q_0-2} \mathcal{B}(\sigma-(q-q_0-nq_0)) \right) + \Bigg( \sum_{n=1}^{2q_0-2} \mathcal{B}(\sigma-2q_0)\\
    & + \sum_{n=1}^{2q_0-2} 2\mathcal{B}(\sigma-2-q_0) + \sum_{n=4}^{2q_0-2} \mathcal{B}(\sigma-(q-q_0-nq_0)-2)+ \sum_{n=q_0+2}^{2q_0-2} 2\mathcal{B}(\sigma-(q-q_0-nq_0)-q_0) \Bigg) \\
    &  - \left( \sum_{n=1}^{2q_0-2} \mathcal{B}(\sigma-2-2q_0) + \sum_{n=q_0+4}^{2q_0-2} 2\mathcal{B}(\sigma-(q-q_0-nq_0)-2-q_0) \right) \\
    & = q^2q_0 - \frac{43}{24} q^2 + qq_0 + \frac{1}{12} q + 1.
  \end{align*}
\item For counting the elements of $F_3$, observe that $\sigma-q_0, \sigma-2, \sigma-(q_0-1-n), \sigma-2-q_0, \sigma-2q_0, \sigma-(q_0-1-n)-2, \sigma-(q_0-1-n)-q_0$ are always non-negative, and also $\sigma-2-2q_0, \sigma-2-q_0-(q_0-1-n), \sigma-2q_0-(q_0-1-n), \sigma-2-2q_0-(q_0-1-n)$ are non-negative as $s > 1$.
%As before, for $s=1$ the cardinality of $F_3$ can be obtained with MAGMA and it coincides with
%$$\frac{1}{40} q^4q_0 - \frac{5}{48} q^4 + \frac{13}{80} q^3q_0 + \frac{251}{1440} q^3 + \frac{5}{24} q^2q_0 - \frac{119}{144} q^2 - \frac{3}{20} qq_0 - \frac{83}{180} q + \frac{64}{15} q_0 - 2,$$
%  \begin{align*}
%    & \sum_{n=0}^{q_0-2} \mathcal{B}(\sigma) - \sum_{n=0}^{q_0-2} \left( \mathcal{B}(\sigma-2) + 2\mathcal{B}(\sigma-q_0) + \mathcal{B}(\sigma-(q_0-1-n)) \right) \\
%    & + \sum_{n=0}^{q_0-2} \left( \mathcal{B}(\sigma-2q_0) + 2\mathcal{B}(\sigma-2-q_0) + \mathcal{B}(\sigma-(q_0-1-n)-2) + \mathcal{B}(\sigma-(q_0-1-n)-q_0) \right) \\
%    & = \frac{1}{40} q^4q_0 - \frac{5}{48} q^4 + \frac{13}{80} q^3q_0 + \frac{251}{1440} q^3 + \frac{5}{24} q^2q_0 - \frac{119}{144} q^2 - \frac{3}{20} qq_0 - \frac{83}{180} q + \frac{64}{15} q_0 - 2.
%  \end{align*}

Taking into consideration the aforementioned observations we get that the cardinality of $F_3$ is equal to
  \begin{align*}
    & \sum_{n=0}^{q_0-2} \mathcal{B}(\sigma) - \sum_{n=0}^{q_0-2} \left( \mathcal{B}(\sigma-2) + 2\mathcal{B}(\sigma-q_0) + \mathcal{B}(\sigma-(q_0-1-n)) \right) \\
    & + \sum_{n=0}^{q_0-2} \left( \mathcal{B}(\sigma-2q_0) + 2\mathcal{B}(\sigma-2-q_0) + \mathcal{B}(\sigma-(q_0-1-n)-2) + 2\mathcal{B}(\sigma-(q_0-1-n)-q_0) \right) \\
    & - \sum_{n=0}^{q_0-2} \left( \mathcal{B}(\sigma-2-2q_0) + 2\mathcal{B}(\sigma-2-q_0-(q_0-1-n)) + \mathcal{B}(\sigma-2q_0-(q_0-1-n)) \right) \\
    & + \sum_{n=0}^{q_0-2} \mathcal{B}(\sigma-2-2q_0-(q_0-1-n))  = \frac{1}{4} q^2 - \frac{1}{2} qq_0.
  \end{align*}
\item The number of elements of $F_4$ is given by:
  \begin{align*}
    & \sum_{n=0}^{q_0-3} \mathcal{B}(\sigma) - \sum_{n=0}^{q_0-3} \left( \mathcal{B}(\sigma-2) + 2\mathcal{B}(\sigma-q_0) + \mathcal{B}(\sigma-(q_0-2-n)) \right) \\
    & + \sum_{n=0}^{q_0-3} \left( \mathcal{B}(\sigma-2q_0) + 2\mathcal{B}(\sigma-2-q_0) + \mathcal{B}(\sigma-(q_0-2-n)-2) + 2\mathcal{B}(\sigma-(q_0-2-n)-q_0) \right) \\
    & - \sum_{n=0}^{q_0-3} \left( \mathcal{B}(\sigma-2-2q_0) + 2\mathcal{B}(\sigma-2-q_0-(q_0-2-n)) + \mathcal{B}(\sigma-2q_0-(q_0-2-n)) \right) \\
    & + \sum_{n=0}^{q_0-3} \mathcal{B}(\sigma-2-2q_0-(q_0-2-n)) = \frac{1}{4} q^2 - \frac{3}{2} qq_0 + q.
  \end{align*}
\item For studying the number of elements of $F_5$, let us consider the cases $c=0$ and $c=1$ separately, starting from the case $c=0$.

%Observe that $\sigma-2-q_0, \sigma-2-(q_0-d)$, $\sigma-q_0-(q_0-d)$ and $\sigma-2-q_0-(q_0-d)$ are non-negative. Therefore, the number of elements in $F_5$ for $s=1$ and $c=0$ is given by:
%$$ - \frac{1}{12} q^3q_0 + \frac{5}{16} q^3 - \frac{25}{48} q^2q_0 + \frac{1}{96} q^2 - \frac{3}{4} qq_0 + \frac{35}{48} q + \frac{5}{4} q_0 - 1,$$
%  \begin{align*}
%    & \sum_{d=1}^{q_0-1} \mathcal{B}^\prime(\sigma)  - \sum_{d=1}^{q_0-1} \left( \mathcal{B}^\prime(\sigma-2) + \mathcal{B}^\prime(\sigma-q_0) + \mathcal{B}^\prime(\sigma-(q_0-d)) \right) \\
%    & = - \frac{1}{12} q^3q_0 + \frac{5}{16} q^3 - \frac{25}{48} q^2q_0 + \frac{1}{96} q^2 - \frac{3}{4} qq_0 + \frac{35}{48} q + \frac{5}{4} q_0 - 1.
%  \end{align*}
%while for $s=2$ and $c=0$ it is given by
%$$ \frac{7}{12} qq_0 - \frac{5}{4} q + \frac{13}{3} q_0 - 4.$$
%  \begin{align*}
%    & \sum_{d=1}^{q_0-1} \mathcal{B}^\prime(\sigma) - \sum_{d=1}^{q_0-1} \left( \mathcal{B}^\prime(\sigma-2) + \mathcal{B}^\prime(\sigma-q_0) + \mathcal{B}^\prime(\sigma-(q_0-d)) \right) \\
%    & + \sum_{d=1}^{q_0-1} \left( \mathcal{B}^\prime(\sigma-2-q_0) + \mathcal{B}^\prime(\sigma-2-(q_0-d)) + \mathcal{B}^\prime(\sigma-q_0-(q_0-d)) \right) \\
%    & - \sum_{d=1}^{q_0-2} \mathcal{B}^\prime(\sigma-2-q_0-(q_0-d)) \\
%    & = \frac{7}{12} qq_0 - \frac{5}{4} q + \frac{13}{3} q_0 - 4.
%  \end{align*}
The number of elements in $F_5$ for $c=0$ can be computed as follows
  \begin{align*}
    & \sum_{d=1}^{q_0-1} \mathcal{B}^\prime(\sigma) - \sum_{d=1}^{q_0-1} \left( \mathcal{B}^\prime(\sigma-2) + \mathcal{B}^\prime(\sigma-q_0) + \mathcal{B}^\prime(\sigma-(q_0-d)) \right) \\
    & + \sum_{d=1}^{q_0-1} \left( \mathcal{B}^\prime(\sigma-2-q_0) + \mathcal{B}^\prime(\sigma-2-(q_0-d)) + \mathcal{B}^\prime(\sigma-q_0-(q_0-d)) \right) \\
    & - \sum_{d=1}^{q_0-1} \mathcal{B}^\prime(\sigma-2-q_0-(q_0-d)) = \frac{1}{2} qq_0 - \frac{1}{2} q.
  \end{align*}
  Let us consider now the case $c=1$. Observe that $\sigma-q_0$ and $\sigma-q_0-(q_0-d)$ are negative only for $d = q_0-1$, while the quantity $\sigma-(q_0-d)$ is always non-negative as $s>2$.
%By direct checking, the number of elements in $F_5$ for $s=1$ and $c=1$ coincides with
%$$ \frac{1}{4} qq_0 + \frac{1}{2} q - \frac{3}{2} q_0,$$
%  \begin{align*}
%    & \sum_{d=0}^{q_0-1} \mathcal{B}^{\prime\prime}(\sigma) - \sum_{d=0}^{q_0-2} \left( \mathcal{B}^{\prime\prime}(\sigma-q_0) + \mathcal{B}^{\prime\prime}(\sigma-(q_0-d)) \right) + \sum_{d=0}^{q_0-2} \mathcal{B}^{\prime\prime}(\sigma-q_0-(q_0-d)) \\
%    & = \frac{1}{4} qq_0 + \frac{1}{2} q - \frac{3}{2} q_0.
%  \end{align*}
Hence the number of elements in $F_5$ with $c=1$ is given by
  \begin{align*}
    & \sum_{d=0}^{q_0-1} \mathcal{B}^{\prime\prime}(\sigma) - \left( \sum_{d=0}^{q_0-1} \mathcal{B}^{\prime\prime}(\sigma-(q_0-d)) + \sum_{d=0}^{q_0-2} \mathcal{B}^{\prime\prime}(\sigma-q_0) \right) + \sum_{d=0}^{q_0-2} \mathcal{B}^{\prime\prime}(\sigma-q_0-(q_0-d)) = \frac{1}{4} qq_0 + \frac{1}{4} q - 1.
  \end{align*}
\item For counting the number of elements of $F_6$, observe that $\sigma - q_0 - (n+1)$ is non-negative as $s>1$.
%The number of elements in $F_6$ for $s=1$ is given by,
%$$- \frac{1}{6} q^2q_0 + \frac{1}{2} q^2 + \frac{1}{12} qq_0 - \frac{3}{2} q + \frac{1}{2} q_0 + 1$$
%  \begin{align*}
%    & \sum_{n=0}^{q_0-2} \mathcal{B}^{\prime\prime}(\sigma) - \sum_{n=0}^{q_0-2} \left( \mathcal{B}^{\prime\prime}(\sigma-q_0) + \mathcal{B}^{\prime\prime}(\sigma-(n+1)) \right) \\
%    & = - \frac{1}{6} q^2q_0 + \frac{1}{2} q^2 + \frac{1}{12} qq_0 - \frac{3}{2} q + \frac{1}{2} q_0 + 1.
%  \end{align*}
So, $|F_6|$ coincides with
  \begin{align*}
    & \sum_{n=0}^{q_0-2} \mathcal{B}^{\prime\prime}(\sigma) - \sum_{n=0}^{q_0-2} \left( \mathcal{B}^{\prime\prime}(\sigma-q_0) + \mathcal{B}^{\prime\prime}(\sigma-(n+1)) \right) + \sum_{n=0}^{q_0-2} \mathcal{B}^{\prime\prime}(\sigma-q_0-(n+1)) = \frac{1}{4} qq_0 - \frac{1}{4} q.
  \end{align*}
\end{itemize}
The conclusion follows after adding up the quantities obtained for each of the six families separately. For all $s>2$ the computation yields $|F| = \frac{1}{2}q(q-1)^2=g(\SK)$.
\end{proof}

A consequence of Proposition \ref{distinctgaps} is that we can assign to each $v \in F$ a unique function $\mathfrak{f}_v$ on $\tilde{\mathcal{S}}$ of the form as in Equation \eqref{eq:functions} that is regular outside $P_\infty$ and has valuation $v-1$ at $P$ in the following way: if $v$ is expressed as
$$ v = a_1 + a_2 q_0 + 2a_3 q_0 + a_4 q + qq_0 \sum_{n=1}^{2q_0-2} b_n(n+1) + cq_0(q+1) + d(2qq_0+2q_0+1) + \sum_{n=0}^{q_0-2} e_n((2n+1)q_0q+n+1) + f q^2 + 1, $$
then we define
$$ \mathfrak{f}_v :=  \tilde{x}_Q^{a_1} \cdot \tilde{y}_Q^{a_2} \cdot \tilde{z}_Q^{a_3} \cdot \tilde{w}_Q^{a_4} \cdot \prod_{n=1}^{2q_0-2} h_n^{b_n} \cdot f_1^c \cdot f_2^d \cdot \prod_{n=0}^{q_0-2} g_n^{e_n} \cdot \pi_P^f. $$
Proposition \ref{distinctgaps} guarantees that there are exactly $g(\SK)$ such functions having pairwise distinct valuations at $P$. The following proposition, together with Proposition \ref{distinctgaps} and Corollary \ref{basisdiff}, completes the proof of Theorem \ref{all}.

\begin{proposition}
$\mathfrak{f}_v$ belongs to $L((2g(\SK)-2)P_\infty)$ for all $v \in F$.
\end{proposition}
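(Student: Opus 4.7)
The plan is to verify, family by family, that $\mathfrak{f}_v$ is (i) regular outside $P_\infty$ and (ii) has pole at $P_\infty$ of order at most $2g(\SK)-2=(q-2)(q^2+1)$. Part (i) is immediate: by Equations \eqref{xtilda}--\eqref{wtilda}, Equation \eqref{eq:div_pi}, and Lemmas \ref{functhn}, \ref{firstpair}, \ref{thirdpair}, \ref{lastfamily}, each of the building blocks $\tilde{x}_Q, \tilde{y}_Q, \tilde{z}_Q, \tilde{w}_Q, h_n, f_1, f_2, g_i, \pi_P$ has its only pole at $P_\infty$, so any monomial in them is regular on $\SK\setminus\{P_\infty\}$.

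For (ii) I would first record the pole orders at $P_\infty$ of the nine building blocks: in the order listed above, these equal (or are bounded by) $q^2-2q_0q+q$, $q^2-q_0q+q_0$, $q^2-q+2q_0$, $q^2+1$, $[(n+1)q_0-n](q^2+1)$, $q_0(q^2+1)$, $2q_0(q^2+1)$, $[(2i+1)q_0-i](q^2+1)$, and $q^2+1$; the bounds for $f_1,f_2,g_i$ come from the corresponding lemmas while the remaining values are exact. Summing exponent-weighted, the pole of $\mathfrak{f}_v$ at $P_\infty$ is bounded above by
\[
a_1(q^2-2q_0q+q)+a_2(q^2-q_0q+q_0)+a_3(q^2-q+2q_0)+B(q^2+1),
\]
where
\[
B:=a_4+f+\sum_{n=1}^{2q_0-2}b_n[(n+1)q_0-n]+cq_0+2dq_0+\sum_{n=0}^{q_0-2}e_n[(2n+1)q_0-n].
\]
Since each of the three coefficients $q^2-2q_0q+q$, $q^2-q_0q+q_0$, $q^2-q+2q_0$ is strictly smaller than $q^2+1$, the task reduces to the combinatorial identity $a_1+a_2+a_3+B\leq q-2$ for an appropriately chosen decomposition of $v-1$ in each family.

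The main obstacle is selecting the right decomposition. For $F_1$ (no extra factor), $F_2$ (one factor $h_n$ with $n$ the family parameter), $F_3$ (one factor $g_n$), and $F_5$ (factors $f_1^c f_2^d$), the decomposition is read directly off the family definition and a straightforward calculation using the prescribed value of $\sigma$ in each case yields $a_1+a_2+a_3+B=q-2$ exactly. For $F_4$ and $F_6$ the apparent choice using an $h$-factor overshoots the bound, and one must argue more cleverly: in $F_4$, I would decompose the increment $(2n+2)q_0q+n+2$ as the pole order at $P$ of $g_0g_n$, i.e.\ take $e_0=e_n=1$ (or $e_0=2$ if $n=0$), and in $F_6$, I would decompose $(2n+2)q_0q+q_0+n+1$ as the pole order of $g_nf_1$, i.e.\ take $e_n=c=1$. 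With these alternative decompositions, the identity $a_1+a_2+a_3+B=q-2$ follows in one line from the formula for $\sigma$ in each of $F_4$ and $F_6$, which completes the proof.
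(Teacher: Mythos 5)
Your proof is correct and takes essentially the same route as the paper's: both bound $-v_{P_\infty}(\mathfrak{f}_v)$ by the exponent-weighted sum of the pole orders of the building blocks at $P_\infty$, each at most $q^2+1$, and reduce the claim to the bookkeeping identity that the total weight is at most $q-2$ in each family. Your explicit choice of decompositions for $F_4$ and $F_6$ (namely $g_0g_n$ and $g_nf_1$) is exactly the one implicit in the paper's canonical expression of $v$ preceding the definition of $\mathfrak{f}_v$, so there is nothing to add.
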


\begin{proof}
  It has been already shown that $\mathfrak{f}_v$ is regular outside $P_\infty$. It remains to show that $-v_{P_\infty}(\mathfrak{f}_v) \leq 2g(\SK)-2$. As before, let $\sigma = a_1 + a_2 + a_3 + a_4 + f$.
  \begin{itemize}
  \item If $v \in F_1$, then $-v_{P_\infty}(\mathfrak{f}_v)$ is equal to
$
     % & a_1(q^2-2qq_0+q) + a_2(q^2-qq_0+q_0) + a_3(q^2-q+2q_0) + a_4(q^2+1) + f(q^2+1) \\
       \sigma(q^2+1) - a_1(2qq_0-q+1) - a_2(qq_0-q_0+1) - a_3(q-2q_0+1) \leq (q-2)(q^2+1) = 2g(\SK)-2.
$
  \item If $v \in F_2$, then $-v_{P_\infty}(\mathfrak{f}_v)$ is equal to
$
      %& a_1(q^2-2qq_0+q) + a_2(q^2-qq_0+q_0) + a_3(q^2-q+2q_0) + a_4(q^2+1) + f(q^2+1) + ((n+1)q_0-n)(q^2+1) \\
    %  & = \sigma(q^2+1) - a_1(2qq_0-q+1) - a_2(qq_0-q_0+1) - a_3(q-2q_0+1) + ((n+1)q_0-n)(q^2+1) \\
        (q-2)(q^2+1) - a_1(2qq_0-q+1) - a_2(qq_0-q_0+1) - a_3(q-2q_0+1) \leq (q-2)(q^2+1) = 2g(\SK)-2.
$
  \item If $v \in F_3$, then $-v_{P_\infty}(\mathfrak{f}_v)$ is at most
$
     % & a_1(q^2-2qq_0+q) + a_2(q^2-qq_0+q_0) + a_3(q^2-q+2q_0) + a_4(q^2+1) + f(q^2+1)  + ((2n+1)q_0-n)(q^2+1)\\
    %  & = \sigma(q^2+1) - a_1(2qq_0-q+1) - a_2(qq_0-q_0+1) - a_3(q-2q_0+1) + ((2n+1)q_0-n)(q^2+1)\\
      (q-2)(q^2+1) - a_1(2qq_0-q+1) - a_2(qq_0-q_0+1) - a_3(q-2q_0+1) \leq (q-2)(q^2+1) = 2g(\SK)-2.
$
  \item If $v \in F_4$, then $-v_{P_\infty}(\mathfrak{f}_v)$ is at most
$
%      & a_1(q^2-2qq_0+q) + a_2(q^2-qq_0+q_0) + a_3(q^2-q+2q_0) + a_4(q^2+1) + f(q^2+1) + q_0(q^2+1) + ((2n+1)q_0-n)(q^2+1) \\
     % & \sigma(q^2+1) - a_1(2qq_0-q+1) - a_2(qq_0-q_0+1) - a_3(q-2q_0+1) + ((2n+2)q_0-n)(q^2+1) \\
       (q-2)(q^2+1) - a_1(2qq_0-q+1) - a_2(qq_0-q_0+1) - a_3(q-2q_0+1) \leq (q-2)(q^2+1) = 2g(\SK)-2.
$
  \item If $v \in F_5$, then $-v_{P_\infty}(\mathfrak{f}_v)$ is at most
$
     % & a_2(q^2-qq_0+q_0) + a_3(q^2-q+2q_0) + a_4(q^2+1) + f(q^2+1) + cq_0(q^2+1) + 2dq_0(q^2+1) \\
      %& \sigma(q^2+1) - a_2(qq_0-q_0+1) - a_3(q-2q_0+1) + (cq_0+2dq_0)(q^2+1) \\
       (q-2)(q^2+1) - a_2(qq_0-q_0+1) - a_3(q-2q_0+1) \leq (q-2)(q^2+1) = 2g(\SK)-2.
$
  \item If $v \in F_6$, then $-v_{P_\infty}(\mathfrak{f}_v)$ is at most
$
      %& a_3(q^2-q+2q_0) + a_4(q^2+1) + f(q^2+1) + q_0(q^2+1) + ((2n+1)q_0-n)(q^2+1) \\
      %& \sigma(q^2+1) - a_3(q-2q_0+1) + (q_0+(2n+1)q_0-n)(q^2+1) \\
       (q-2)(q^2+1) - a_3(q-2q_0+1) \leq (q-2)(q^2+1) = 2g(\SK)-2.
$
  \end{itemize}
\end{proof}

It is possible, though a bit technical, to determine the Ap\'ery set and a set of generators of $H(P)$ for $P \not\in \SK(\mathbb{F}_{q^4}).$ It turns out that for $s=1$, one needs $19$ generators, while for $s \ge 2$, $H(P)$ has $3q-2q_0$ generators. More details will appear in the upcoming PhD thesis of the second author.

\section*{Acknowledgements}

The first author would like to acknowledge the support from The Danish Council for Independent Research (DFF-FNU) for the project \emph{Correcting on a Curve}, Grant No.~8021-00030B.

\vspace{1ex}
\noindent
Peter Beelen, Leonardo Landi and Maria Montanucci

\vspace{.5ex}
\noindent
Technical University of Denmark,\\
Department of Applied Mathematics and Computer Science,\\
Matematiktorvet 303B,\\
2800 Kgs. Lyngby,\\
Denmark,\\
pabe@dtu.dk\\
lelan@dtu.dk\\
marimo@dtu.dk\\

\end{document}